\documentclass[oneside, a4paper]{amsart}

	\usepackage[a4paper, margin=1in]{geometry}
	\usepackage{amsmath, amsthm, amssymb, latexsym, mathtools, mathrsfs, eucal}
	\usepackage[all,cmtip]{xy}
	\usepackage{dsfont, soul, stackrel, tikz-cd, graphicx, etoolbox, multirow}
	\DeclareGraphicsExtensions{.pdf,.png,.jpg}
	\usepackage{fancyhdr}
	\usepackage[shortlabels]{enumitem}
	\usepackage[pdfmenubar=true, pdfborder={0 0 0 [3 3]}, colorlinks, citecolor=magenta, urlcolor=blue]{hyperref}
	\numberwithin{equation}{section}
	\setenumerate{listparindent=\parindent}

	\newtheoremstyle{Mytheorem}%
	{1em}{1em}%
	{\slshape}{}%
	{\bfseries}{.}%
	{ }{}

	\newtheoremstyle{Mydefinition}%
	{1em}{1em}%
	{}{}%
	{\bfseries}{.}%
	{ }{}

	\theoremstyle{Mydefinition}
	\newtheorem{statement}{Statement}[section]
	\newtheorem{definition}[statement]{Definition}
	
	\newtheorem{remark}[statement]{Remark}
	
	\newtheorem{example}[statement]{Example}

	\newtheorem*{comment*}{Comment}
	\newtheorem{notation}[statement]{Notation}

	\theoremstyle{Mytheorem}
	\newtheorem{theorem}[statement]{Theorem}
	\newtheorem{corollary}[statement]{Corollary}
	
	\newtheorem{proposition}[statement]{Proposition}
	\newtheorem{lemma}[statement]{Lemma}





	\newcommand{\G}{GL_2^{+}(\mathbb{Q})}

	\newcommand{\pr}{\operatorname{prim}}

	\newcommand{\nc}{\newcommand}
	\newcommand{\be}{\begin{eqnarray*}}
	\newcommand{\ee}{\end{eqnarray*}}
	\newcommand{\bea}{\begin{eqnarray}}
	\newcommand{\eea}{\end{eqnarray}}
	\newcommand{\bs}{\begin{split}}
	\newcommand{\es}{\end{split}}
	\newcommand{\bal}{\begin{align}}
	\newcommand{\eal}{\end{align}}

	\nc{\bei}{\begin{itemize}}
	\nc{\eei}{\end{itemize}}
	\nc{\bee}{\begin{enumerate}}
	\nc{\eee}{\end{enumerate}}
	\nc{\bet}{\begin{thm}}
	\nc{\eet}{\end{thm}}
	\nc{\bed}{\begin{defn}}
	\nc{\eed}{\end{defn}}
	\nc{\bel}{\begin{lem}}
	\nc{\eel}{\end{lem}}
	\nc{\bep}{\begin{prop}}
	\nc{\eep}{\end{prop}}
	\nc{\bec}{\begin{corollary}}
	\nc{\eec}{\end{corollary}}
	\nc{\ber}{\begin{rem}}
	\nc{\eer}{\end{rem}}
	\nc{\beex}{\begin{example}}
	\nc{\eeex}{\end{example}}
	\nc{\bpm}{\begin{pmatrix}}
	\nc{\epm}{\end{pmatrix}}
	\nc{\bspm}{\left(\begin{smallmatrix}}
	\nc{\espm}{\end{smallmatrix}\right)}

	\newcommand{\cA}{\mathcal{A}}

	\newcommand{\cF}{\mathcal{F}}
	
	\newcommand{\cH}{\mathcal{H}}

	\newcommand{\cP}{\mathcal{P}}

	\newcommand{\cT}{\mathcal{T}}


	\newcommand{\bH}{\mathbb{H}}


	\newcommand{\BP}{\mathbf{P}}


	\nc{\frf}{\mathfrak{f}}

	\nc{\frs}{\mathfrak{s}}  
	\nc{\frt}{\mathfrak{t}} 
	\nc{\fru}{\mathfrak{u}}
	\nc{\lsl}{\mathfrak{sl}}
	\nc{\lgl}{\mathfrak{gl}}
	\nc{\upsi}{\underline{\psi}}
	\nc{\uchi}{\underline{\chi}}
	 

	\DeclareMathOperator{\Spec}{Spec}
	\DeclareMathOperator{\Proj}{Proj}

	\DeclareMathOperator{\Lie}{Lie}

	\DeclareMathOperator{\coker}{coker}
	\DeclareMathOperator{\im}{im}

	\DeclareMathOperator{\Cl}{Cl}



	\newcommand{\lra}{\longrightarrow}    
	
	\nc{\surjto}{\twoheadrightarrow}
	\nc{\ts}{\times}
	\nc{\ds}{\displaystyle}
	\nc{\nd}{\noindent}  
	\nc{\ud}{\underline}
	\nc{\ov}{\overline}
	\nc{\maplra}[1]{\buildrel #1 \over \lra}
	\nc{\mapto}[1]{\buildrel #1 \over \to}
	\nc{\setb}[1]{\{  #1\}}
	\nc{\cHom}{\mathcal{H}om}


	\def\a{\alpha}
	\def\b{\beta}

	\def\g{\gamma} \def\G{\Gamma}
	
	\def\l{\lambda} 
	\def\m{\mu}
	\def\n{\nu}

	\def\C{\mathbb{C}}
	
	\def\Z{\mathbb{Z}}

	\def\wt{\hbox{\it wt}}
	\def\ch{\hbox{\it ch}}

	\newcommand{\downtriangleBase}{%
		\begin{tikzpicture}%
			\draw[line cap=round,-] (0,1.6ex) -- (1.7ex,1.6ex);%
			\draw[line cap=round,-] (0,1.6ex) -- (0.85ex,0);%
			\draw[line cap=round,-] (0.85ex,0) -- (1.7ex,1.6ex);%
		\end{tikzpicture}%
	}
	\DeclareMathOperator{\downtriangle}{\downtriangleBase}

\title[Toric Calabi-Yau complete intersections and flat $F$-manifold structures]
{Twisted de Rham complex for toric Calabi-Yau complete intersections and flat $F$-manifold structures}

\author{Jeehoon Park}
\address{Jeehoon Park: QSMS, Seoul National University, 1 Gwanak-ro, Gwanak-gu, Seoul, South Korea 08826 }
\email{jpark.math@gmail.com}

\author{Junyeong Park}
\address{Junyeong Park: Department of Mathematical Sciences, Ulsan National Institute of Science and Technology, UNIST-gil 50, Ulsan 44919, Korea}
\email{junyeongp@gmail.com}

\begin{document}
\maketitle
\begin{abstract}
We describe the primitive middle-dimensional cohomology $\bH$ of a compact simplicial toric complete intersection variety in terms of a twisted de Rham complex.
Then this enables us to construct a concrete algorithm of formal flat $F$-manifold structures on $\bH$ in the Calabi-Yau case by using the techniques of \cite{Park23}, which turn the twisted de Rham complex into a quantization dGBV (differential Gerstenhaber-Batalin-Vilkovisky) algebra and seek for an algorithmic solution to an associated \textit{weak primitive form.}

%
\end{abstract}
\tableofcontents

\section{Introduction}

In \cite{Park23}, an explicit computer algorithm for formal flat $F$-manifold structures on the primitive middle-dimensional cohomology of a smooth projective complete intersection variety was given by the first named author; we refer to \cite{Park23} for a motivation to consider the problem of constructing formal $F$-manifold structures and relevant history with references.
The goal of the current paper is to generalize such result to a quasi-smooth complete intersection variety of ample hypersurfaces in a compact simplicial toric variety. The algorithm in \cite{Park23} was based on the description of the cohomology in terms of the top-degree cohomology of a certain twisted de Rham complex.
So, for the generalization, one needs to describe the primitive middle-dimensional cohomology $\bH$ of a quasi-smooth complete intersection variety $X$ in a compact simplicial toric variety in terms of the top-degree cohomology of a certain twisted de Rham complex, which is the main subject of the current article.

\subsection{Main theorem}
Let $\mathbb{C}$ be the field of complex numbers.
Let $\BP_\Sigma$ be the complete simplicial toric variety over $\mathbb{C}$ of dimension $n$ without torus factor associated to a rational simplicial complete fan $\Sigma$. We denote by $R_\Sigma=\mathbb{C}[x_1, \ldots, x_r]$ the toric homogeneous coordinate ring (where $r =| \Sigma(1) |$) graded by the class group $\mathrm{Cl}(\mathbf{P}_\Sigma)$. 
Let $X_G\subseteq\mathbf{P}_\Sigma$ be a quasi-smooth complete intersection variety defined by the intersections of ample hypersurfaces $X_{G_1}, \ldots, X_{G_n}$ where $X_{G_i}$ is an ample hypersurface defined by the zero locus of $G_i \in R_\Sigma$ with $\deg G_i=\beta_i\in\mathrm{Cl}(\mathbf{P}_\Sigma)$.
Consider a locally free $\CMcal{O}_{\mathbf{P}_\Sigma}$-module
\begin{align*}
\CMcal{E}:=\CMcal{O}_{\mathbf{P}_\Sigma}(\beta_1)\oplus\cdots\oplus\CMcal{O}_{\mathbf{P}_\Sigma}(\beta_k)
\end{align*}
with the associated projective bundle 
\begin{align*}
\xymatrix{\pi:\mathbf{P}(\CMcal{E}):=\underline{\Proj}_{\mathbf{P}_\Sigma}\mathrm{Sym}_{\CMcal{O}_{\mathbf{P}_\Sigma}}^\bullet\CMcal{E} \ar[r] & \mathbf{P}_\Sigma}.
\end{align*}
Then $\mathbf{P}(\CMcal{E})$ is a simplicial toric variety without torus factor whose coordinate ring is 
$
A:=R_\Sigma[y_1,\cdots,y_k]
$
graded by $\mathrm{Cl}(\mathbf{P}(\CMcal{E}))$.
Now consider the following polynomial
\begin{align*}
S:=y_1G_1+\cdots+y_kG_k\in A=R_\Sigma[y]
\end{align*}
which corresponds to the section $(G_1,\cdots,G_k)\in\Gamma(\mathbf{P}_\Sigma,\CMcal{E})$, where $\CMcal{E}$ pulls back to the invertible $\CMcal{O}_{\mathbf{P}(\CMcal{E})}$-module $\CMcal{O}_{\mathbf{P}(\CMcal{E})}(1)$ along $\pi:\mathbf{P}(\CMcal{E})\rightarrow\mathbf{P}_\Sigma$. Denote by $X_S\subseteq\mathbf{P}(\CMcal{E})$ the zero locus of $S$ and we will consider the hypersurface complement $\BP(\CMcal{E})\setminus X_S$. Then 
one has an isomorphism $H^{n+k-1}(\BP_\Sigma \setminus X_{G}) \simeq 
H^{n+k-1}(\BP(\CMcal{E}) \setminus X_S)$ induced from $\pi$; see \eqref{comp}.
The relationship between $H^{n+k-1}(\BP_\Sigma \setminus X_{G})$ and $\bH=H^{n-k}_{\mathrm{prim}}(X_{G})$ is given as follows:

\begin{theorem}\cite[Proposition 3.2]{Mav} \label{Mav}
If $X_{G}=X_{G_1} \cap \cdots \cap X_{G_k}$ is a quasi-smooth complete intersection inside $\BP_\Sigma$ where each $X_{G_i}$ is ample, then there is an exact sequence of mixed Hodge structures 
\be
0 \to H^{n-k-1}(\BP_\Sigma) \xrightarrow{\cup [X_{G}]} H^{n+k-1}(\BP_\Sigma) \to H^{n+k-1}(\BP_\Sigma \setminus X_{G}) \to \bH \to 0
\ee
where $[X_{G}] \in H^{2k}(\BP_\Sigma)$ is the cohomology class of $X_{G}$.
\end{theorem}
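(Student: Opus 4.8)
The plan is to deduce the sequence from the Gysin (residue) long exact sequence of the open inclusion $U := \BP_\Sigma \setminus X_{G} \hookrightarrow \BP_\Sigma$, combined with the Lefschetz theorems for the ambient toric orbifold. Write $i : X_{G} \hookrightarrow \BP_\Sigma$ for the closed embedding. Although $X_{G}$ is only quasi-smooth, it is a compact $V$-manifold (orbifold) of dimension $n-k$, so with $\mathbb{Q}$-coefficients it carries a fundamental class and an orbifold normal bundle of rank $k$; hence the Thom isomorphism $H^{\bullet}_{X_{G}}(\BP_\Sigma) \cong H^{\bullet - 2k}(X_{G})$ holds and produces the Gysin sequence. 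The relevant segment, in cohomological degree $n+k-1$, reads
\[
H^{n-k-1}(X_{G}) \xrightarrow{i_*} H^{n+k-1}(\BP_\Sigma) \to H^{n+k-1}(U) \xrightarrow{\res} H^{n-k}(X_{G}) \xrightarrow{i_*} H^{n+k}(\BP_\Sigma).
\]
Since this is the long exact sequence of a pair, each arrow is a morphism of mixed Hodge structures once the Gysin maps are Tate-twisted by $(-k)$; I would record this compatibility at the end.

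First I would identify the two outer maps with cup products. By the Lefschetz hyperplane theorem for ample quasi-smooth complete intersections in a simplicial toric variety, $i^* : H^{j}(\BP_\Sigma) \to H^{j}(X_{G})$ is an isomorphism for $j \le n-k-1$ and injective for $j = n-k$, with $\bH = \coker(i^*)$ in the latter degree. Feeding the isomorphism $i^* : H^{n-k-1}(\BP_\Sigma) \xrightarrow{\sim} H^{n-k-1}(X_{G})$ into the projection formula $i_* i^*(\alpha) = \alpha \cup [X_{G}]$ turns the leftmost Gysin map into $\cup[X_{G}]$; mixed hard Lefschetz for the product of ample classes $[X_{G}] = [X_{G_1}]\cdots[X_{G_k}]$ shows that $\cup[X_{G}] : H^{n-k-1}(\BP_\Sigma) \to H^{n+k-1}(\BP_\Sigma)$ is injective. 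This yields the exact beginning $0 \to H^{n-k-1}(\BP_\Sigma) \xrightarrow{\cup[X_{G}]} H^{n+k-1}(\BP_\Sigma) \to H^{n+k-1}(U)$.

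For the right end I would analyze $i_* : H^{n-k}(X_{G}) \to H^{n+k}(\BP_\Sigma)$. By exactness $\im(\res) = \ker(i_*)$, so it suffices to show the canonical projection $H^{n-k}(X_{G}) \twoheadrightarrow \bH = \coker(i^*)$ restricts to an isomorphism $\ker(i_*) \xrightarrow{\sim} \bH$. Again $i_* i^*(\alpha) = \alpha\cup[X_{G}]$, and mixed hard Lefschetz makes $\cup[X_{G}] : H^{n-k}(\BP_\Sigma) \xrightarrow{\sim} H^{n+k}(\BP_\Sigma)$ an isomorphism; hence $i_*$ is surjective and $\ker(i_*) \cap \im(i^*) = 0$ (if $i^*\alpha \in \ker i_*$ then $\alpha\cup[X_{G}] = 0$, forcing $\alpha = 0$). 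A dimension count using Poincaré duality on the orbifold $\BP_\Sigma$, namely $\dim \ker(i_*) = \dim H^{n-k}(X_{G}) - \dim H^{n+k}(\BP_\Sigma) = \dim H^{n-k}(X_{G}) - \dim H^{n-k}(\BP_\Sigma) = \dim \bH$, then upgrades the injection $\ker(i_*) \hookrightarrow \bH$ to an isomorphism. Consequently the composite $H^{n+k-1}(U) \xrightarrow{\res} H^{n-k}(X_{G}) \twoheadrightarrow \bH$ is surjective with kernel $\ker(\res) = \im\big(H^{n+k-1}(\BP_\Sigma) \to H^{n+k-1}(U)\big)$, which splices the two halves into the asserted four-term exact sequence.

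The hard part will be the orbifold bookkeeping rather than the homological algebra: one must justify the Thom/Gysin formalism, Poincaré duality, and (mixed) hard Lefschetz for the quasi-smooth $X_{G}$ and the simplicial $\BP_\Sigma$ with $\mathbb{Q}$-coefficients, and then check that every map in the argument respects the mixed Hodge structures up to the relevant Tate twist — in particular that $H^{n-k}(X_{G})$ is pure, so that the surjection onto $\bH$ is strict. Establishing mixed hard Lefschetz for the \emph{product} of distinct ample classes $[X_{G_1}]\cdots[X_{G_k}]$ (rather than a power of a single class) is the one input that needs care, and I would invoke the Khovanskii–Teissier form of hard Lefschetz to secure it.
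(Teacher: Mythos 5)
The paper offers no proof of this statement at all: it is imported verbatim as a citation of \cite[Proposition 3.2]{Mav}, so there is no internal argument to compare yours against. Your reconstruction via the Thom--Gysin sequence of the pair $(\BP_\Sigma,\BP_\Sigma\setminus X_G)$ is essentially the standard (and Mavlyutov's own) route, and the homological algebra is correct: identifying the outer Gysin maps with $\cup[X_G]$ via the projection formula together with the Lefschetz isomorphism $i^\ast\colon H^{n-k-1}(\BP_\Sigma)\xrightarrow{\ \sim\ }H^{n-k-1}(X_G)$, and producing the splitting $H^{n-k}(X_G)=\im(i^\ast)\oplus\ker(i_\ast)$ to identify $\ker(i_\ast)$ with $\bH$, is exactly what is needed to splice the four-term sequence. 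You also correctly isolate the genuinely nontrivial inputs (orbifold Thom/Gysin and Poincar\'e duality with $\QQ$-coefficients, the toric Lefschetz hyperplane theorem, and the mixed hard Lefschetz for the product $[X_{G_1}]\cdots[X_{G_k}]$). One simplification worth noting: the surjectivity of $i_\ast\colon H^{n-k}(X_G)\to H^{n+k}(\BP_\Sigma)$ is free, because $\BP_\Sigma\setminus X_G$ is covered by the $k$ affine opens $\BP_\Sigma\setminus X_{G_i}$ and hence has cohomological dimension at most $n+k-1$, so $H^{n+k}(\BP_\Sigma\setminus X_G)=0$ and the Gysin sequence forces surjectivity; this removes half of the mixed hard Lefschetz burden. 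The remaining injectivity statements (of $\cup[X_G]$ on $H^{n-k-1}$ and on $H^{n-k}$) do require the Khovanskii--Teissier/mixed form for distinct ample classes --- note that composing the single-class hard Lefschetz injectivity for each $[X_{G_i}]$ does not suffice once the intermediate degrees pass the middle --- but that form is available for projective simplicial toric varieties, so your argument goes through.
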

Thus if $\cup [X_{G}]$ is an isomorphism, then we have $\bH \simeq H^{n+k-1}(\BP(\CMcal{E}) \setminus X_S)$.
In \cite{BatCox} and \cite{Mav}, $\bH$ was described by the Jacobian ideal of $S$.
Using the Jacobian ideal description of $\bH$ and some cochain level computation (Proposition \ref{TdRCayley-rho}), we were able to describe $H^{n+k-1}(\BP(\CMcal{E}) \setminus X_S)$ as the top-degree cohomology of $(\Omega_{A/\mathbb{C}}, d+ dS\wedge-)$ exactly the same as the smooth projective complete intersection case of \cite{Park23}.
\begin{theorem}\label{mainresult}
With the same assumption as Theorem \ref{Mav},
we have an isomorphism
\bea \label{wanted}
H^{r+k}(\Omega_{A/\mathbb{C}}^\bullet, d+dS \wedge-) \simeq H^{n+k-1}(\BP(\CMcal{E}) \setminus X_S)
\eea
where $r-n$ is the rank of $\Cl(\BP_\Sigma)$.
\end{theorem}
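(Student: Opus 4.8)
The plan is to prove \eqref{wanted} by showing that both sides are computed by the same $\Cl(\BP(\CMcal{E}))$-graded pieces of the Jacobian ring of the Cayley polynomial $S$, and only then to make the identification explicit at the cochain level. Write $J_S\subseteq A$ for the Jacobian ideal generated by $\partial S/\partial x_1,\ldots,\partial S/\partial x_r,\partial S/\partial y_1,\ldots,\partial S/\partial y_k$, and recall that under the Cayley substitution $\partial S/\partial y_j=G_j$ while $\partial S/\partial x_i=\sum_j y_j\,\partial G_j/\partial x_i$, so that $A/J_S$ simultaneously records the defining equations $G_j$ and the Jacobian data of the complete intersection $X_G$. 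By Theorem \ref{Mav} together with the descriptions of $\bH$ in \cite{BatCox} and \cite{Mav}, the $\Cl(\BP(\CMcal{E}))$-graded pieces of $H^{n+k-1}(\BP(\CMcal{E})\setminus X_S)$ are identified, in the relevant degrees, with the corresponding graded pieces of $A/J_S$. Quasi-smoothness of $X_S$ inside $\BP(\CMcal{E})$ — which is exactly the quasi-smoothness of the ample complete intersection $X_G$ read through the Cayley trick — is what makes this description valid; concretely it forces $V(J_S)$ into the irrelevant locus of $A$, so the Koszul complex on the partials of $S$ is exact off that locus.

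First I would analyze the auxiliary complex $(\Omega^\bullet_{A/\mathbb{C}}, dS\wedge-)$ in isolation. Wedging with $dS$ is, up to sign, the Koszul differential of the sequence of partials of $S$; by the quasi-smoothness just recalled this complex has cohomology supported on the irrelevant locus, so after restricting to the graded pieces that actually contribute to the complement cohomology its cohomology is concentrated in top degree $r+k$ and equals $(A/J_S)\cdot dx_1\wedge\cdots\wedge dx_r\wedge dy_1\wedge\cdots\wedge dy_k$. This produces the Jacobian ring as the top cohomology of the wedge-only part of the differential.

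Next I would reinstate $d$ and treat the full operator $d+dS\wedge-$ by decomposing $\Omega^\bullet_{A/\mathbb{C}}$ into its $\Cl(\BP(\CMcal{E}))$-graded strands, each a finite complex, and filtering each strand by form degree so that $dS\wedge-$ is the leading term and $d$ a strictly lower-order perturbation. The associated graded differential is then the Koszul differential of the previous step, whose cohomology (in the contributing degrees) sits only in cohomological degree $r+k$; the spectral sequence of the filtration therefore collapses and gives $H^{r+k}(\Omega^\bullet_{A/\mathbb{C}}, d+dS\wedge-)\cong A/J_S$ as a graded object, in precisely the pieces matched above. The decisive remaining step is the cochain-level computation of Proposition \ref{TdRCayley-rho}: it sends a Batyrev–Cox representative $P\,\Omega_0/S^{\,j}$ — with $\Omega_0$ the toric Euler form of degree $n+k-1$ on $\BP(\CMcal{E})$, obtained by contracting the Cox top form with the $\rank\Cl(\BP(\CMcal{E}))=r-n+1$ Euler vector fields, and $P$ of the appropriate degree — to the corresponding Jacobian-ring representative in $\Omega^{r+k}_{A/\mathbb{C}}$, matching Griffiths–Dwork pole-order reduction on the complement side (up to the expected factorials) with the twisted differential on the de Rham side. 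Assembling these identifications over all contributing degrees yields \eqref{wanted}.

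The step I expect to be the genuine obstacle is this last dictionary between the free rank-one module $\Omega^{r+k}_{A/\mathbb{C}}$ of top forms on the $(r+k)$-variable Cox ring and honest rational top-degree forms on the $(n+k-1)$-dimensional variety $\BP(\CMcal{E})$: one must contract with the correct $r-n+1$ Euler vector fields, track all degree shifts against the $\Cl(\BP(\CMcal{E}))$-grading, and keep the signs, factorials, and the ampleness hypotheses consistent so that the pole-order filtration on one side matches the filtration by $dS\wedge-$ on the other. Once this bookkeeping is pinned down the two Jacobian-ring descriptions are forced to agree, and the argument reproduces, over the toric Cox ring $A$, the exact mechanism used for smooth projective complete intersections in \cite{Park23}.
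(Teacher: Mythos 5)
Your argument has a genuine gap at its central step: the claim that the Koszul complex $(\Omega_{A/\mathbb{C}}^\bullet,dS\wedge-)$ has cohomology concentrated in top degree ``in the contributing graded pieces,'' so that the form-degree spectral sequence for $d+dS\wedge-$ collapses. Quasi-smoothness only places $V(\mathrm{Jac}(S))$ inside the irrelevant locus of $\mathbf{P}(\CMcal{E})$, and that locus is a positive-dimensional union of coordinate subspaces of $\mathbf{A}^{r+k}$; the $r+k$ partials of $S$ therefore do \emph{not} form a regular sequence, and the lower Koszul cohomology does not vanish. Your hedge ``after restricting to the graded pieces that actually contribute'' is doing all the work and is not justified: a statement of this type is exactly the Adolphson--Sperber description of the \emph{total} cohomology of the twisted complex, which is available for $\mathbf{P}^n$ (\cite{AS}) but which the paper explicitly points out is \emph{not} available once the Cox ring carries $m\geq3$ gradings. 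Even granting concentration, the collapse of your spectral sequence would only produce a surjection from (a quotient of) $A_{c_B}/(\mathrm{Jac}(S)\cap A_{c_B})$ onto $H^{r+k}(\Omega_{A/\mathbb{C}}^\bullet,d+dS\wedge-)$; the injectivity is where the Hodge-theoretic input enters, and you have not supplied it.

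For comparison, the paper never argues through Koszul regularity. It first proves that $\theta_c\circ\theta_w\circ\rho_S$ is surjective on cohomology by exhibiting explicit sections (Propositions \ref{tctw-surjective} and \ref{TdRCayley-rho2}, via the short exact sequence built from $\epsilon_{w,S}$ and $\rho_S$), and then, in Proposition \ref{TdR-qsm}, computes $H^{r+k}$ of the twisted complex directly against the Batyrev--Cox pole-order description of $H^{n+k-1}(\mathbf{P}(\CMcal{E})\setminus X_S)$ --- using \cite[Proposition 10.1, Corollary 10.2]{BatCox}, whose degeneration statements are precisely where quasi-smoothness and ampleness are consumed --- so that the two sides are finite-dimensional of the same dimension and the surjection is forced to be an isomorphism (Corollary \ref{Dwork-finite}). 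Your final ``dictionary'' paragraph ($\Omega_0$, the Euler contractions, the factorials of the Griffiths--Dwork reduction) does match the paper's maps $\alpha_{r+k}$, $\alpha_{r+k-1}$ and Lemma \ref{lem1}, but it cannot rescue the argument unless you replace the Koszul-concentration step by the dimension count coming from the Hodge filtration.
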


The full description of the other degree cohomologies of $(\Omega_{A/\mathbb{C}}^\bullet, d+ dS\wedge-)$ is not that easy.
The main difference from \cite{Park23} is that the polynomial ring $A$ has multiple $\Z^m$-grading 
with $m\geq 3$ coming from the Picard group of the ambient simplicial toric variety and the Cayley trick (see subsection \ref{sec2.2}); in the case of \cite{Park23}, we have $m=2$ (the ambient toric variety is the projective space).
These extra grading makes it difficult to describe the cohomology of $(\Omega_{A/\mathbb{C}}^\bullet,d+dS\wedge-)$;  when $m=2$, \cite{AS} describes not only the top degree cohomology of $(\Omega_{A/\mathbb{C}}^\bullet,d+dS\wedge-)$ but also its total cohomology completely.
In the general case $m\geq 2$, we have some limited result, namely, we provide a limited comparison result between
the total cohomology of $\BP(\CMcal{E}) \setminus X_S$ and that of $(\Omega_{A/\mathbb{C}}^\bullet,d+dS\wedge-)$: see Theorem \ref{TdRCayley-comparison} for details.
In particular, we can provide a partial strengthening of Theorem \ref{mainresult}, 
namely, if $\Sigma$ is smooth (so that $\BP_\Sigma$ is smooth) or $\mathrm{rank}(\Cl(\BP_\Sigma))=1$, then \eqref{wanted} still holds regardless of quasi-smoothness of $X_G$ and ampleness of $X_{G_i}$.

\begin{remark}
    The results in section \ref{sec2} and appendices still hold without change of their proofs, even if one replaces the field $\mathbb{C}$ by any field of characteristic $0$. Consequently, Theorem \ref{mainresult} holds for any field of characteristic $0$.
\end{remark}

\subsection{Applications}
Once we have the isomorphism \eqref{wanted}, we can apply the techniques of \cite{Park23} to prove the following theorem.

\begin{theorem}\label{app}
If $X_G$ is a Calabi-Yau quasi-smooth complete intersection of ample hypersurfaces, then there is an explicit algorithm of formal flat $F$-manifolds on $H^{n+k-1}(\BP(\CMcal{E}) \setminus X_S)$.
\end{theorem}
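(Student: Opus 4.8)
The plan is to transport the problem, via the isomorphism \eqref{wanted} of Theorem \ref{mainresult}, into the purely algebraic setting of \cite{Park23} and then run that paper's algorithm essentially verbatim. Since \eqref{wanted} identifies $H^{n+k-1}(\BP(\CMcal{E}) \setminus X_S)$ with the top-degree cohomology $H^{r+k}(\Omega_{A/\mathbb{C}}^\bullet, d + dS\wedge-)$ of a twisted de Rham complex attached to the single polynomial $S \in A = R_\Sigma[y]$, the input demanded by the machinery of \cite{Park23} --- a graded polynomial algebra together with a superpotential whose twisted de Rham cohomology computes the cohomology of interest --- is exactly what we have produced. The first thing I would record is that the Calabi-Yau hypothesis supplies the one numerical datum the construction needs: under the grading and Cayley-trick conventions of subsection \ref{sec2.2}, the condition $\sum_i \beta_i = \beta_0$ (with $\beta_0$ the anticanonical class of $\BP_\Sigma$) is what makes the relevant volume form on $\Omega_{A/\mathbb{C}}^\bullet$ homogeneous, and hence endows the top-degree cohomology with a well-defined trace (integration) pairing.

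Next, following \cite{Park23}, I would promote $(\Omega_{A/\mathbb{C}}^\bullet, d + dS\wedge-)$ to a quantization dGBV algebra: one introduces a BV operator $\Delta$ (contraction against the Calabi-Yau volume form) together with a formal quantization parameter, so that the differential $Q := d + dS\wedge-$, the wedge product, and $\Delta$ assemble into a differential Gerstenhaber--Batalin--Vilkovisky algebra over a ring of formal parameters. The associated weak primitive form is then produced recursively: one solves a Maurer--Cartan-type equation together with the primitivity normalization order-by-order in the deformation parameters, using at each step the trace pairing to split $Q$ and to invert it on a complement of its cohomology, thereby continuing the recursion. By the formalism of \cite{Park23}, such a solution determines a formal flat $F$-manifold structure on $H^{r+k}(\Omega_{A/\mathbb{C}}^\bullet, d + dS\wedge-) \cong H^{n+k-1}(\BP(\CMcal{E}) \setminus X_S)$; since each step is a finite linear-algebra computation over $A$, the procedure is an explicit algorithm, which is the assertion.

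The main obstacle is to verify that the homological machinery of \cite{Park23}, tailored to the bigraded case $m=2$ (projective space with the Cayley trick), continues to function in the presence of the multiple $\mathbb{Z}^m$-grading with $m\geq 3$ coming from $\Cl(\BP(\CMcal{E}))$. The delicate point is that the recursion relies on controlling the cohomology of $Q$ in intermediate degrees so that $Q$ can be inverted on the relevant complement; with several independent gradings one must rule out spurious classes that would otherwise obstruct the order-by-order solution or break the homotopy used to invert $Q$. I expect the Calabi-Yau condition to be precisely what rescues this, just as it is what makes \eqref{wanted} hold: pinning $\deg S$ to the anticanonical class fixes the single total weight governing the Hodge-theoretic grading on middle cohomology, so that the acyclicity powering the recursion is controlled by that one weight exactly as in \cite{Park23}, with the remaining gradings playing no role. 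Making this reduction rigorous --- and confirming that the trace pairing remains nondegenerate on the top cohomology in the Calabi-Yau case --- is the crux of the argument.
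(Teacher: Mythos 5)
Your overall architecture --- transport the problem through the isomorphism \eqref{wanted}, promote the twisted de Rham complex to a quantization dGBV algebra, and solve for a weak primitive form order-by-order in the deformation parameters --- is the same as the paper's. But the two points you single out as the crux would not go through as you describe them, and the paper resolves the difficulty differently. First, no trace pairing is needed or used: the theorem produces a flat $F$-manifold, not a Frobenius manifold, so there is no metric whose nondegeneracy must be confirmed. The inversion of $Q_S$ in the recursion is effected simply by choosing a $\mathbb{C}$-basis $\{u_\alpha\}$ of $\cA_0^0/Q_S(\cA_0^{-1})$ and choosing preimages $\lambda_{\underline{\alpha}}^{(i)}$ of the $Q_S$-exact parts at each step; no duality enters. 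The actual role of the Calabi--Yau condition is that it is literally $c_B=0$, which places the relevant cohomology $A_{c_B}/Q_S(\cA_{c_B}^{-1})$ in charge zero and hence equips it with a ring structure --- without this, the product $u_\alpha u_\beta$ in Step-1 of the algorithm would land in charge $2c_B\neq c_B$ and could not be re-expanded in the basis $\{u_\rho\}$.

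Second, your expectation that the Calabi--Yau condition restores the intermediate-degree acyclicity (``the remaining gradings playing no role'') is exactly what fails. In the $m=2$ case of \cite{Park23} the recursion is underwritten by the weak $Q_S\Delta$-lemma, which rests on the isomorphism $H^{-1}(\cA,K_S)\cong H^{0}(\cA,K_S)$ of \cite{AS}; in the toric multi-graded setting this isomorphism is not guaranteed, Calabi--Yau or not, and the paper says so explicitly. The resolution is not to prove a vanishing but to reorganize the algorithm so that it does not need one: the equations \eqref{ind_u} are solved with arbitrary choices of the $\lambda_{\underline{\alpha}}^{(i)}$, at the price that the resulting structure constants $a_{\underline{\alpha}}{}^{\rho}$ depend on those choices (see the footnote in Step-$\boldsymbol\ell$). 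Existence of the algorithm survives; canonicity does not. As written, your plan would stall at the step where you try to ``rule out spurious classes,'' because there is no such vanishing to be had; you would instead need to check, as the paper does, that each equation in \eqref{ind_u} is solvable for \emph{some} choice and that the weight-homogeneity argument of subsection \ref{sec3.8} then yields the second structure equation.
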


More concretely, the techniques of \cite{Park23} can be summarized as follows:
\begin{itemize}
\item turn the twisted de Rham complex $(\Omega_{A/\mathbb{C}}^\bullet, d+ dS\wedge-)$ into a quantization dGBV algebra $(\cA_0^\bullet(( \hbar )),  \hbar \Delta + Q_S)$.
\item introduce the notion of a weak primitive form associated to $(\cA_0^\bullet(( \hbar )),  \hbar \Delta + Q_S)$, which provides a flat $F$-manifold structure on its top-degree cohomology.
\item construct an algorithm to find a weak primitive form.
\end{itemize}

We refer to Section \ref{sec3} for details; also see Theorem \ref{concreteversion} for a more precise version of the above theorem \ref{app}. 
%

\medskip

We briefly explain the structure of the paper.
It  consists of two main parts, section \ref{sec2} and section \ref{sec3}.
Section \ref{sec2} is devoted to describe cohomologies of a complete intersection variety $X_G$ in a compact simplicial toric variety. In subsection \ref{sec2.1}, a precise set-up is given. In subsection \ref{sec2.2}, we explain the Cayley trick which transforms the study of cohomologies of $X_G$ to one of cohomologies of a hypersurface (given by a certain polynomial $S$) complement. Subsection \ref{sec2.3} deals with de Rham cohomologies of this hypersurface complement and then we relate this to cohomologies of the twisted de Rham complex and state the key theorem \ref{TdRCayley-comparison} in subsection \ref{sec2.4}. Sections \ref{sec2.5} and \ref{sec2.6} are devoted to a proof of Theorem \ref{TdRCayley-comparison}.

Section \ref{sec3} is about an application of Theorem \ref{TdRCayley-comparison} to a construction of a formal flat $F$-manifold structure on $\bH$ based on \cite{Park23}. Subsections \ref{sec3.1} and \ref{sec3.2} are about a dGBV algebra formulation of the results in section \ref{sec2}. In subsections \ref{sec3.3} and \ref{sec3.4}, we shortly review the notions of flat $F$-manifolds and some relevant concepts. We introduce a quantization dGBV algebra and the twisted Gauss-Manin connection on it in subsection \ref{sec3.5}. Then in subsection \ref{sec3.6} we recall the notion of \textit{a weak primitive form} using the first structure connection and in subsections \ref{sec3.7} and \ref{sec3.8} we provide its explicit algorithms following \cite{Park23}.

Finally, we include appendices A and B, which are technically useful in section \ref{sec2}.

\subsection{Acknowledgement}

The work of Jeehoon Park was supported by the National Research Foundation of South Korea (NRF-2021R1A2C1006696) and the National Research Foundation of Korea (NRF) grant funded by the Korea government (MSIT) (No.2020R1A5A1016126).

Junyeong Park is supported by Samsung Science and Technology Foundation under Project Number SSTF-BA2001-02.

\section{Cohomology of complete intersections in compact simplicial toric varieties}
\label{sec2}
\subsection{Basic set up} \label{sec2.1}
The basic reference for toric varieties is \cite{CLS}. Let $M$ be a free abelian group of rank $n$, and $N:=\mathrm{Hom}(M,\mathbb{Z})$ its dual. Denote $M_\mathbb{R}:=M\otimes_\mathbb{Z}\mathbb{R}$ and $N_\mathbb{R}:=N\otimes_\mathbb{Z}\mathbb{R}$. Given a rational simplicial complete fan $\Sigma$ in $N_\mathbb{R}$, denote by $\mathbf{P}_\Sigma$ the associated simplicial toric variety\footnote{The set $\mathbf{P}_\Sigma(\mathbb{C})$ of $\mathbb{C}$-points has a natural orbifold structure, i.e. it has only finite quotient singularities (see \cite[Theorem 3.1.19]{CLS}).} of dimension $n$ over $\mathbb{C}$.

Let $\Sigma(\ell)$ be the $\ell$-dimensional cone of $\Sigma$. From now on assume that $\mathbf{P}_\Sigma$ has no torus factors, i.e., the set of minimal generators of rays in $\Sigma(1)$ spans $N_\mathbb{R}$ (see \cite[Corollary 3.3.10]{CLS}). Then \cite[Theorem 4.1.3]{CLS} gives an exact sequence
\begin{align}\label{ray-cl-exact}
\begin{aligned}
\xymatrix{0 \ar[r] & M \ar[r] & \displaystyle\bigoplus_{\rho\in\Sigma(1)}\mathbb{Z}D_\rho \ar[r] & \mathrm{Cl}(\mathbf{P}_\Sigma) \ar[r] & 0}
\end{aligned}
\end{align}
where $D_\rho$ is a $\mathbf{T}_N$-invariant prime divisor\footnote{Here $\mathbf{T}_N:=\Spec\mathbb{C}[M]\cong\mathbf{G}_m^n$ is the torus associated to $N$. In fact, $D_\rho$ is the Zariski closure of $\mathcal{O}(\rho)\simeq\mathrm{Hom}(\rho^{\bot} \cap M,\mathbb{C}^*)$ where $\mathcal{O}(\rho)$ corresponds to $\rho$ under the Orbit-Cone correspondence, \cite[Theorem 3.2.6]{CLS}. Here $^{\bot}$ means the orthogonal complement under the natural pairing on $M$ and $N$.} in $\mathbf{P}_\Sigma$ \cite[section 4.1]{CLS} and $\mathrm{Cl}(\mathbf{P}_{\Sigma})$ is the class group of $\mathbf{P}_{\Sigma}$. For simplicity of presentation, we make the following additional assumption:
\begin{itemize}
\item $\mathrm{Cl}(\mathbf{P}_\Sigma)$ is a free abelian group.
\end{itemize}
Let $\mathbf{G}_\Sigma:=\Spec\mathbb{C}[\mathrm{Cl}(\mathbf{P}_\Sigma)]$. Then \eqref{ray-cl-exact} induces an exact sequence
\begin{align*}
\xymatrix{0 \ar[r] & \mathbf{G}_\Sigma \ar[r] & \mathbf{G}_m^r \ar[r] & \mathbf{G}_m^n \ar[r] & 0}.
\end{align*}
with $r=|\Sigma(1)|$. On the other hand, denote the total coordinate ring of $\mathbf{P}_\Sigma$ by
\begin{align*}
R_{\Sigma}:=\mathbb{C}[x_\rho : \rho \in \Sigma(1)]
\end{align*}
Then $\mathbf{G}_\Sigma$ acts on $\Spec R_\Sigma\cong\mathbf{A}^r$ via the componentwise action of $\mathbf{G}_m^r$ on $\mathbf{A}^r$. This action induces a $\mathrm{Cl}(\mathbf{P}_\Sigma)$-grading on $R_\Sigma$:
\begin{align*}
\deg_c\left(\prod_{\rho\in\Sigma(1)}x_\rho^{a_\rho}\right) =\left[\sum_{\rho\in\Sigma(1)}a_\rho D_\rho\right]\in\mathrm{Cl}(\mathbf{P}_{\Sigma}).
\end{align*}
For $\beta\in\mathrm{Cl}(\mathbf{P}_\Sigma)$, we denote $R_{\Sigma,\beta}\subseteq R_\Sigma$ the corresponding graded piece. For each $\sigma\in\Sigma$ denote
\begin{align*}
\widehat{x}_\sigma:=\prod_{\rho\nsubseteq\sigma}x_\rho\in R_\Sigma.
\end{align*}
Let $B_\Sigma:=\langle\widehat{x}_\sigma\ |\ \sigma\in\Sigma\rangle\subseteq R_\Sigma$ be the ideal generated by the $\widehat{x}_\sigma$ above. Then this ideal gives the affine variety $Z_\Sigma:=V(B_\Sigma)\subseteq\Spec R_\Sigma\cong\mathbf{A}^r$ with complement $U_\Sigma:=\mathbf{A}^r\setminus Z_\Sigma$. Now we have
\begin{align*}
\mathbf{P}_\Sigma\cong U_\Sigma/\mathbf{G}_\Sigma
\end{align*}
(see \cite[section 5.1]{CLS} for details in the case $\mathbb{C}=\mathbb{C}$).

Given homogeneous polynomials $G_1,\cdots,G_k\in R_\Sigma$ with $\deg_cG_i=\beta_i\in\mathrm{Cl}(\mathbf{P}_\Sigma)$, denote $X_G\subseteq\mathbf{P}_\Sigma$ their common zero locus. Later, we focus on the case where $X_G$ is a quasi-smooth complete intersection (\cite[Definition 1.1]{Mav}) of ample hypersurfaces (i.e. $\deg_cG_i=\beta_i\in\mathrm{Cl}(\mathbf{P}_\Sigma)$ is the class of an ample divisor on $\mathbf{P}_\Sigma$). In this case, $G_i \in B_\Sigma$ by \cite[Lemma 9.15]{BatCox} and $X_G$ has pure dimension $n-k$ (\cite[Corollary 1.6]{Mav}). Moreover, $X_G$ is connected provided $k < n$ (\cite[Corollary 1.7]{Mav}).

\subsection{The Cayley trick}\label{sec2.2}
We will express the algebraic de Rham cohomology $H_{\widehat{\mathrm{dR}}}^\bullet(\mathbf{P}_\Sigma\setminus X_G)$ in terms of a hypersurface complement of another simplicial toric variety. Consider a locally free $\CMcal{O}_{\mathbf{P}_\Sigma}$-module
\begin{align*}
\CMcal{E}:=\CMcal{O}_{\mathbf{P}_\Sigma}(\beta_1)\oplus\cdots\oplus\CMcal{O}_{\mathbf{P}_\Sigma}(\beta_k)
\end{align*}
with the associated projective bundle (cf. \cite[\href{https://stacks.math.columbia.edu/tag/01OA}{Tag 01OA}]{Stacks})
\begin{align*}
\xymatrix{\pi:\mathbf{P}(\CMcal{E}):=\underline{\Proj}_{\mathbf{P}_\Sigma}\mathrm{Sym}_{\CMcal{O}_{\mathbf{P}_\Sigma}}^\bullet\CMcal{E} \ar[r] & \mathbf{P}_\Sigma}.
\end{align*}
By the projective bundle formula for Chow groups (see \cite[Theorem 3.3]{WF} or \cite[\href{https://stacks.math.columbia.edu/tag/02TX}{Tag 02TX}]{Stacks}), we have
\begin{align*}
\mathrm{Cl}(\mathbf{P}(\CMcal{E}))\cong\mathrm{CH}_{n+k-2}(\mathbf{P}(\CMcal{E}))\cong\bigoplus_{i=0}^{k-1}\mathrm{CH}_{n-1+i}(\mathbf{P}_\Sigma)\cong
\left\{\begin{array}{ll}
\mathrm{Cl}(\mathbf{P}_\Sigma) & \textrm{if $k=1$} \\
\mathrm{Cl}(\mathbf{P}_\Sigma)\oplus\mathbb{Z} & \textrm{if $k>1$}
\end{array}\right.
\end{align*}
where we use $\mathrm{CH}_n(\mathbf{P}_\Sigma)\cong\mathbb{Z}$ which is generated by the class $[\mathbf{P}_\Sigma]\in\mathrm{CH}_n(\mathbf{P}_\Sigma)$. Moreover, by \cite[pp.135--136]{Mav} or \cite[pp.58--59]{TOCB}, $\mathbf{P}(\CMcal{E})$ is a simplicial toric variety without torus factor. In what follows, we enumerate $\Sigma(1)=\{\rho_1,\cdots,\rho_r\}$ and denote $x_i:=x_{\rho_i}$. By introducing $q_1:=y_1,\cdots,q_k:=y_k$ and $q_{k+1}:=x_1,\cdots,q_{r+k}:=x_r$, we will use the following notation
\begin{align}\label{ringa}
A=R_\Sigma[y]=\mathbb{C}[x_1, \ldots, x_r, y_1, \ldots, y_k]=\mathbb{C}[q].
\end{align}
Given a homogeneous $f\in A$, we use the identification $\mathrm{Cl}(\mathbf{P}(\CMcal{E}))\cong\mathrm{Cl}(\mathbf{P}_\Sigma)\oplus\mathbb{Z}$ above to write
\begin{align*}
\deg f=(\deg_cf,\deg_wf)\in\mathrm{Cl}(\mathbf{P}_\Sigma)\oplus\mathbb{Z}.
\end{align*}

\begin{notation}\label{notation-deg} Given a $(\mathrm{Cl}(\mathbf{P}_\Sigma)\oplus\mathbb{Z})$-graded $\mathbb{C}$-vector space $\Xi$, denote
\begin{align*}
\Xi_\lambda&:=\{\xi\in\Xi\ |\ \deg_c\xi=\lambda\}\\
\Xi_{(m)}&:=\{\xi\in\Xi\ |\ \deg_w\xi=m\}
\end{align*}
which are $\mathbb{C}$-linear subspaces of $\Xi$. Moreover, we sometimes denote
\begin{align*}
\Xi_{(+)}:=\{\xi\in\Xi\ |\ \deg_w\xi>0\}
\end{align*}
Double subscripts denote the corresponding intersections:
\begin{align*}
\Xi_{c,(w)}:=\Xi_c\cap\Xi_{(w)},\quad c\in\mathrm{Cl}(\mathbf{P}_\Sigma),\quad w\in\{+\}\cup\mathbb{Z}.
\end{align*}
For example, we have
\begin{align*}
\widehat{\Omega}_{A/\mathbb{C}}^\bullet&=\bigoplus_{0\leq i\leq r+k}\bigoplus_{\lambda\in\mathrm{Cl}(\mathbf{P}_\Sigma)}\bigoplus_{m\geq0}\left(\widehat{\Omega}_{A/\mathbb{C}}^i\right)_{\lambda,(m)}
\end{align*}
\end{notation}

From now on, we focus on the case $k>1$, in which case the isomorphism is given by
\begin{align*}
\xymatrix{\mathrm{Cl}(\mathbf{P}_\Sigma)\oplus\mathbb{Z} \ar[r]^-\sim & \mathrm{Cl}(\mathbf{P}(\CMcal{E})) & (\gamma,1) \ar@{|->}[r] & \pi^\ast\gamma+\left[\CMcal{O}_{\mathbf{P}(\CMcal{E})}(1)\right]}
\end{align*}
Consequently, we have
\begin{align*}
\deg G_i=(\beta_i,0)\in\mathrm{Cl}(\mathbf{P}_\Sigma)\oplus\mathbb{Z},\quad\deg y_i=(-\beta_i,1)\in\mathrm{Cl}(\mathbf{P}_\Sigma)\oplus\mathbb{Z}.
\end{align*}
Now the section
\begin{align}\label{dwork}
S:=y_1G_1+\cdots+y_kG_k\in A=R_\Sigma[y]=\mathbb{C}[q]
\end{align}
is of degree $(0,1)\in\mathrm{Cl}(\mathbf{P}_\Sigma)\oplus\mathbb{Z}$. Note that $S$ corresponds to the section $(G_1,\cdots,G_k)\in\Gamma(\mathbf{P}_\Sigma,\CMcal{E})$, where $\CMcal{E}$ pulls back to the invertible $\CMcal{O}_{\mathbf{P}(\CMcal{E})}$-module $\CMcal{O}_{\mathbf{P}(\CMcal{E})}(1)$ along $\pi:\mathbf{P}(\CMcal{E})\rightarrow\mathbf{P}_\Sigma$. Denote by $X_S\subseteq\mathbf{P}(\CMcal{E})$ the zero locus of $S$. Then we get a commutative diagram
\begin{align}\label{Cayley-diagram}
\begin{aligned}
\xymatrix{
\Spec A[S^{-1}] \ar[d] \ar[r] & U_\Sigma\times_\mathbb{C}(\mathbf{A}^k\setminus0) \ar[d] \\
\mathbf{P}(\CMcal{E})\setminus X_S \ar[d]_-\varphi \ar[r] & \mathbf{P}(\CMcal{E}) \ar[d]^-\pi \\
\mathbf{P}_\Sigma\setminus X_G \ar[r] & \mathbf{P}_\Sigma\cong U_\Sigma/\mathbf{G}_\Sigma
}
\end{aligned}
\end{align}
Denote $X_{G_i}\subseteq\mathbf{P}_\Sigma$ the hypersurface cut out by $G_i$ so $\{\mathbf{P}_\Sigma\setminus X_{G_i}\}_{i=1,\cdots,k}$ is an open covering of $\mathbf{P}_\Sigma\setminus X_G$ such that $\CMcal{E}|_{\mathbf{P}_\Sigma\setminus X_{G_i}}\cong\CMcal{O}_{\mathbf{P}_\Sigma\setminus X_{G_i}}^{\oplus k-1}$. Hence $\varphi$ is an $\mathbf{A}^{k-1}$-bundle. By Corollary \ref{dR-affine-bundle},
\begin{align}\label{comp}
\xymatrix{\varphi^\ast:H_{\widehat{\mathrm{dR}}}^\bullet(\mathbf{P}_\Sigma\setminus X_G) \ar[r] & H_{\widehat{\mathrm{dR}}}^\bullet(\mathbf{P}(\CMcal{E})\setminus X_S)}
\end{align}
is an isomorphism. Moreover,
\begin{align*}
\mathbf{P}(\CMcal{E})\cong\frac{\left(U_\Sigma\times_\mathbb{C}(\mathbf{A}^k\setminus0)\right)}{\Spec\mathbb{C}[\mathrm{Cl}(\mathbf{P}(\CMcal{E}))]}.
\end{align*}
and $\mathbf{P}(\CMcal{E})\setminus X_S$ is affine with coordinate ring
\begin{align*}
A[S^{-1}]^\mathbf{G}= A[S^{-1}]_{0,(0)}, \quad\mathbf{G}:=\mathbf{G}_\Sigma\times_\mathbb{C}\mathbf{G}_m
\end{align*}
so we get an isomorphism of cohomology spaces\footnote{see appendix \ref{algdRnormal}; we need to take the \emph{reflexive hull} $\ \widehat{\cdot} \ $ because $A[S^{-1}]^\mathbf{G}$ is not necessarily smooth.}:
\begin{align} \label{ciso}
H_{\widehat{\mathrm{dR}}}^\bullet(\mathbf{P}(\CMcal{E})\setminus X_S)\cong H^\bullet\left(\widehat{\Omega}_{A[S^{-1}]^\mathbf{G}/\mathbb{C}}^\bullet,d\right).
\end{align}

\subsection{The algebraic de Rham cohomologies of hypersurface complements}
\label{sec2.3}
 In this subsection, following the appendix \ref{actiondR}, we provide a relationship between
\begin{align*}
\left(\widehat{\Omega}_{A[S^{-1}]^\mathbf{G}/\mathbb{C}}^\bullet,d\right)\quad\textrm{and}\quad\left(\Omega_{A[S^{-1}]/\mathbb{C}}^\bullet,d\right).
\end{align*}

Given a finitely generated abelian group $V$, the group algebra $\mathbb{C}[V]$ is a Hopf algebra with respect to the following maps (cf. \cite[pp. 230--231]{MAG}):
\begin{align*}
\xymatrixrowsep{0pc}\xymatrix{
\Delta_{\mathbb{C}[V]/\mathbb{C}}:\mathbb{C}[V] \ar[r] & \mathbb{C}[V]\otimes_\mathbb{C}\mathbb{C}[V] & x^v \ar@{|->}[r] & x^v\otimes x^v \\
\epsilon_{\mathbb{C}[V]/\mathbb{C}}:\mathbb{C}[V] \ar[r] & \mathbb{C} & x^v \ar@{|->}[r] & 1 \\
S_{\mathbb{C}[V]/\mathbb{C}}:\mathbb{C}[V] \ar[r] & \mathbb{C}[V] & x^v \ar@{|->}[r] & x^{-v}
}
\end{align*}
so $G_V:=\Spec\mathbb{C}[V]$ is an affine algebraic group over $\mathbb{C}$. Denote $\mathfrak{I}_V\subseteq\mathbb{Z}[V]$ the augmentation ideal so that $\mathfrak{I}_V/\mathfrak{I}_V^2\cong V$. Hence
\begin{align*}
\mathrm{Lie}(G_V)\cong\mathrm{Hom}_\mathbb{C}\left(\mathfrak{I}_V/\mathfrak{I}_V^2\otimes_\mathbb{Z}\mathbb{C},\mathbb{C}\right)\cong\mathrm{Hom}(V,\mathbb{C})
\end{align*}
naturally in $V$ (cf. \cite[p. 188]{MAG}). Consequently, (\ref{ray-cl-exact}) gives a commutative diagram of $\mathbb{C}$-vector spaces
\begin{align}\label{Lie-dual-exact}
\begin{aligned}
\xymatrix{
0 \ar[r] & \mathrm{Lie}(\mathbf{G}_\Sigma) \ar[d]^-\wr \ar[r] & \mathrm{Lie}(\mathbf{G}_m^r) \ar[d]^-\wr \ar[r] & \mathrm{Lie}(\mathbf{G}_m^n) \ar[d]^-\wr \ar[r] & 0 \\
0 \ar[r] & \mathrm{Hom}(\mathrm{Cl}(\mathbf{P}_\Sigma),\mathbb{C}) \ar[r] & \mathrm{Hom}(\mathbb{Z}^{\oplus\Sigma(1)},\mathbb{C}) \ar[r] & \mathrm{Hom}(M,\mathbb{C}) \ar[r] & 0
}
\end{aligned}
\end{align}
with exact rows. Identify $\mathbb{Z}^{\oplus\Sigma(1)}$ with its $\mathbb{Z}$-dual by setting $\langle D_\alpha,D_\beta\rangle:=\delta_{\alpha\beta}$ (the Kronecker delta):
\begin{align}\label{ray-dual}
\xymatrix{\mathbb{Z}^{\oplus\Sigma(1)} \ar[r]^-\sim & \mathrm{Hom}(\mathbb{Z}^{\oplus\Sigma(1)},\mathbb{Z}) & D_\rho \ar@{|->}[r] & \langle D_\rho,-\rangle}.
\end{align}
Then for each $\rho\in\Sigma(1)$, there is a map
\begin{align*}
\xymatrix{\delta_\rho:R_\Sigma \ar[r] & R_\Sigma & x^u \ar@{|->}[r] & \langle D_\rho,u\rangle x^u}
\end{align*}
which is a $\mathbb{C}$-derivation:
\begin{align*}
\delta_\rho(x^{u+v})=\langle D_\rho,u+v\rangle x^{u+v}=\langle D_\rho,u\rangle x^ux^v+\langle D_\rho,v\rangle x^vx^u=\delta_\rho(x^u)x^v+\delta_\rho(x^v)x^u.
\end{align*}
which naturally extends to $\mathbb{C}[\mathbb{Z}^{\oplus\Sigma(1)}]\cong R_\Sigma[x_\rho^{-1}\ |\ \rho\in\Sigma(1)]$. Hence we get a $\mathbb{C}$-linear injection
\begin{align*}
\xymatrix{\displaystyle\bigoplus_{\rho\in\Sigma(1)}\mathbb{C} D_\rho \ar@{^(->}[r] & \mathrm{Der}_\mathbb{C}(\mathbb{C}[\mathbb{Z}^{\oplus\Sigma(1)}],\mathbb{C}[\mathbb{Z}^{\oplus\Sigma(1)}]) & D_\rho \ar@{|->}[r] & \delta_\rho}.
\end{align*}
Moreover, we have
\begin{align*}
(\Delta_{R_\Sigma/\mathbb{C}}\circ\delta_\rho)(x^u)=\langle D_\rho,u\rangle x^u\otimes x^u=x^u\otimes \langle D_\rho,u\rangle x^u=((\mathrm{Id}_{R_\Sigma}\otimes_\mathbb{C}\delta_\rho)\circ\Delta_{R_\Sigma/\mathbb{C}})(x^u)
\end{align*}
for every $u\in\mathbb{Z}^{\oplus\Sigma(1)}$ so each $\delta_\rho$ is a left invariant differential (cf. \cite[p. 195]{MAG}). Hence we get a $\mathbb{C}$-linear isomorphism up to identification of the Lie algebra of algebraic groups and left invariant derivations (cf. \cite[Proposition 10.28]{MAG} and \cite[Proposition 10.29]{MAG}):
\begin{align*}
\xymatrix{\displaystyle\bigoplus_{\rho\in\Sigma(1)}\mathbb{C} D_\rho \ar[r]^-\sim & \mathrm{Lie}(\mathbf{G}_m^r) & D_\rho \ar@{|->}[r] & \delta_\rho}.
\end{align*}
Along this identification, the left column in (\ref{Lie-dual-exact}) can be written as follows:
\begin{align*}
\xymatrix{\mathrm{Hom}(\mathrm{Cl}(\mathbf{P}_\Sigma),\mathbb{C}) \ar[r] & \mathrm{Lie}(\mathbf{G}_\Sigma) & \psi \ar@{|->}[r] & \displaystyle\delta_\psi:=\sum_{\rho\in\Sigma(1)}\psi([D_\rho])\delta_\rho}.
\end{align*}
From the projective bundle formula above and Example \ref{Euler-Gm} together with the natural isomorphism $\mathrm{Hom}(-,\mathbb{C})\cong\mathrm{Lie}(\Spec\mathbb{C}[-])$, we get the following commutative diagram:
\begin{align*}
\xymatrix{
\mathrm{Hom}(\mathrm{Cl}(\mathbf{P}_\Sigma)\oplus\mathbb{Z},\mathbb{C}) \ar[d]^-\wr \ar[r]^-\sim & \mathrm{Lie}(\mathbf{G}_\Sigma)\oplus\mathrm{Lie}(\mathbf{G}_m) \ar[d]^-\wr & (\psi,1) \ar@{|->}[r] & \displaystyle\left(\delta_\psi,\lambda\frac{\partial}{\partial\lambda}\right) \\
\mathrm{Hom}(\mathrm{Cl}(\mathbf{P}(\CMcal{E})),\mathbb{C}) \ar[r]^-\sim & \mathrm{Lie}(\mathbf{G}) & &
}
\end{align*}
Then $(\psi,0),(0,1)\in\mathrm{Hom}(\mathrm{Cl}(\mathbf{P}_\Sigma),\mathbb{C})\oplus\mathbb{C}$ give Euler vector fields
\begin{align*}
E_\psi&:=\sum_{\rho\in\Sigma(1)}\delta_{(\psi,0)}(x_\rho)\frac{\partial}{\partial x_\rho}+\sum_{j=1}^k\delta_{(\psi,0)}(y_j)\frac{\partial}{\partial y_j}=\sum_{\rho\in\Sigma(1)}\psi([D_\rho])x_\rho\frac{\partial}{\partial x_\rho}+\sum_{j=1}^k\psi(-\beta_j)y_j\frac{\partial}{\partial y_j}\\
E_w&:=\sum_{\rho\in\Sigma(1)}\delta_{(0,1)}(x_\rho)\frac{\partial}{\partial x_\rho}+\sum_{j=1}^k\delta_{(0,1)}(y_j)\frac{\partial}{\partial y_j}=\sum_{j=1}^ky_j\frac{\partial}{\partial y_j}.
\end{align*}
on $R_\Sigma$ respectively. Denote by
\begin{align*}
\theta_\psi:=\langle E_\psi,-\rangle,\quad\theta_w:=\langle E_w,-\rangle
\end{align*}
the contraction with each Euler vector field.

By Proposition \ref{torusdeg}, a set of basis for $\mathrm{Hom}(\mathrm{Cl}(\mathbf{P}_\Sigma),\mathbb{Z})$, say $\{\epsilon_1,\cdots,\epsilon_{r-n}\}$ gives a system of Euler vector fields $E_1,\cdots,E_{r-n}$ and the corresponding $\theta_{c,1},\cdots,\theta_{c,r-n}$ together with gradings $\deg_{c,1},\cdots,\deg_{c,r-n}$ so
\begin{align*}
\left(\Omega_{A[S^{-1}]/\mathbb{C}}^\bullet,d\right)^\mathbf{G}=\left(\Omega_{A[S^{-1}]/\mathbb{C}}^\bullet,d\right)_{0,(0)}
\end{align*}
Moreover, $\left(\widehat{\Omega}_{A[S^{-1}]^\mathbf{G}/\mathbb{C}}^\bullet,d\right)$ is the submodule of $\left(\Omega_{A[S^{-1}]/\mathbb{C}}^\bullet,d\right)^\mathbf{G}$ annihilated by $\theta_{c,1},\cdots,\theta_{c,r-n}$ and $\theta_w$.
\begin{lemma}\label{Cayley-theta} $\theta_{c,i}$, $1\leq i\leq r-n$, and $\theta_w$ above have the following properties.
\begin{quote}
(1) For $1\leq i,j\leq r-n$, the following identities hold:
\begin{align*}
\theta_{c,i}^2=0,\quad\theta_w^2=0,\quad\theta_{c,i}\theta_{c,j}+\theta_{c,j}\theta_{c,i}=0,\quad\theta_{c,i}\theta_w+\theta_w\theta_{c,i}=0
\end{align*}
(2) $\theta_{c,i}$ and $\theta_w$ are derivations of the wedge product, i.e. if $\alpha$ is an $\ell$-form, then
\begin{align*}
\theta_{c,i}(\alpha\wedge\beta)=\theta_{c,i}\alpha\wedge\beta+(-1)^\ell\alpha\wedge\theta_{c,i}\beta
\end{align*}
\begin{align*}
\theta_w(\alpha\wedge\beta)=\theta_w\alpha\wedge\beta+(-1)^\ell\alpha\wedge\theta_w\beta
\end{align*}
(3) For a homogeneous $f$ and $\lambda\in\mathbb{C}$, if we denote
\begin{align*}
D_{\lambda,f}:=\lambda d+df\wedge-
\end{align*}
then for a homogeneous $\xi \in \Omega_{A/\mathbb{C}}^\bullet$,
\begin{align*}
(D_{\lambda,f}\theta_{c,i}+\theta_{c,i}D_{\lambda,f})\xi=\left(\lambda\deg_{c,i}\xi+(\deg_{c,i}f)f\right)\xi
\end{align*}
\begin{align*}
(D_{\lambda,f}\theta_w+\theta_wD_{\lambda,f})\xi=\left(\lambda\deg_w\xi+(\deg_wf)f\right)\xi.
\end{align*}
Note that $\lambda\in\mathbb{C}$ are of degree zero.
\end{quote}
\end{lemma}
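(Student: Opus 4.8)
The plan is to recognize each of $\theta_{c,i}$ and $\theta_w$ as the interior product (contraction) with a derivation of $A$, namely the Euler vector fields $E_i$ and $E_w$, and then to deduce all three statements from the standard formalism of interior products together with Cartan's homotopy formula, all of which are valid for the algebraic de Rham complex $\Omega_{A/\mathbb{C}}^\bullet$ of Kähler differentials. Throughout, for a derivation $X$ of $A$ I write $\iota_X=\langle X,-\rangle$ for the associated degree $-1$ antiderivation of $\Omega_{A/\mathbb{C}}^\bullet$, so that $\theta_{c,i}=\iota_{E_i}$ and $\theta_w=\iota_{E_w}$.

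Statements (2) and (1) are formal. Property (2) is precisely the defining antiderivation (graded Leibniz) rule for $\iota_X$: on an $\ell$-form $\alpha$ one has $\iota_X(\alpha\wedge\beta)=(\iota_X\alpha)\wedge\beta+(-1)^\ell\alpha\wedge(\iota_X\beta)$, which I would take as known or verify on decomposable forms $\alpha=a\,dq_{i_1}\wedge\cdots\wedge dq_{i_\ell}$. For property (1), I would invoke the general anticommutation identity $\iota_X\iota_Y+\iota_Y\iota_X=0$, valid for any pair of derivations $X,Y$; this follows from the antiderivation property together with $\iota_X\iota_X=0$ (feeding the same vector twice into an alternating form). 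Specializing $(X,Y)$ to $(E_i,E_i)$, $(E_w,E_w)$, $(E_i,E_j)$ and $(E_i,E_w)$ yields the four relations in (1).

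The substance is in (3), where I would compute $D_{\lambda,f}\theta+\theta D_{\lambda,f}$ for $\theta=\iota_E$ with $E$ one of the Euler fields, splitting $D_{\lambda,f}=\lambda d+df\wedge-$ into two pieces. For the $\lambda d$ piece, Cartan's homotopy formula $d\iota_E+\iota_E d=\cL_E$ (which holds on $\Omega_{A/\mathbb{C}}^\bullet$) identifies the contribution as $\lambda\,\cL_E$; since $E$ is an Euler vector field for the grading $\deg_{c,i}$ (respectively $\deg_w$), $\cL_E$ acts on a homogeneous form $\xi$ by multiplication by its degree, giving $\lambda(\deg_{c,i}\xi)\,\xi$. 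For the $df\wedge-$ piece, I would apply the antiderivation rule to the $1$-form $df$:
\[
\iota_E(df\wedge\xi)+df\wedge(\iota_E\xi)=(\iota_E\,df)\,\xi-df\wedge(\iota_E\xi)+df\wedge(\iota_E\xi)=(\iota_E\,df)\,\xi,
\]
and $\iota_E\,df=E(f)=(\deg_{c,i}f)\,f$ for homogeneous $f$. Adding the two contributions gives $(\lambda\deg_{c,i}\xi+(\deg_{c,i}f)f)\,\xi$, and identically with $w$ in place of $c,i$.

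The only points requiring care — and the main (if modest) obstacle — are the sign bookkeeping in the $df\wedge-$ cancellation above and the verification that $\cL_E$ really acts as the degree operator. The latter reduces to $E(q^a)=(\deg q^a)\,q^a$ on monomials, which propagates via $d\circ\cL_E=\cL_E\circ d$ to the generators $dq_i$ and hence, by the Leibniz rule, to all of $\Omega_{A/\mathbb{C}}^\bullet$; here one checks for instance $\cL_{E_w}(dy_j)=d(E_w y_j)=dy_j$ and $\cL_{E_w}(dx_i)=d(E_w x_i)=0$, matching $\deg_w(dy_j)=1$ and $\deg_w(dx_i)=0$.
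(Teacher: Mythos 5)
Your proposal is correct: identifying $\theta_{c,i}$ and $\theta_w$ as contractions with Euler vector fields and invoking the antiderivation property plus Cartan's homotopy formula is exactly the ``direct computation'' the paper alludes to (its proof of this lemma consists of that single sentence). The sign bookkeeping in the $df\wedge-$ anticommutator and the verification that $\mathcal{L}_E$ acts as the degree operator are both handled correctly, so nothing is missing.
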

\begin{proof}
They follow from direct computation.
\end{proof}

\begin{proposition}\label{dRconcent} With the notations above, the following hold.
\begin{quote}
(1) The inclusions in the commutative square are quasi-isomorphisms.
\begin{align*}
\xymatrix{
\left(\Omega_{A[S^{-1}]/\mathbb{C}}^\bullet,d\right)_{0,(0)} \ar@{^(->}[d] \ar@{^(->}[r] & \left(\Omega_{A[S^{-1}]/\mathbb{C}}^\bullet,d\right)_0 \ar@{^(->}[d] \\
\left(\Omega_{A[S^{-1}]/\mathbb{C}}^\bullet,d\right)_{(0)} \ar@{^(->}[r] & \left(\Omega_{A[S^{-1}]/\mathbb{C}}^\bullet,d\right)
}
\end{align*}
(2) There are cochain maps induced from $\theta_{c,i}$ and $\theta_w$ respectively:
\begin{align*}
\xymatrix{\theta_{c,i}:\left(\Omega_{A[S^{-1}]/\mathbb{C}}^\bullet,d\right)_0 \ar[r] & \left(\Omega_{A[S^{-1}]/\mathbb{C}}^\bullet[-1],d\right)_0}
\end{align*}
\begin{align*}
\xymatrix{\theta_w:\left(\Omega_{A[S^{-1}]/\mathbb{C}}^\bullet,d\right)_{(0)} \ar[r] & \left(\Omega_{A[S^{-1}]/\mathbb{C}}^\bullet[-1],d\right)_{(0)}}
\end{align*}
\end{quote}
\end{proposition}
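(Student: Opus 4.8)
The plan is to extract both parts from the Cartan-type homotopy formula of Lemma \ref{Cayley-theta}(3), specialized to $f=0$ and $\lambda=1$. Since $0$ is homogeneous of every degree and $d0=0$, the two displays in that lemma degenerate to the operator identities
\begin{align*}
d\theta_{c,i}+\theta_{c,i}d=\deg_{c,i}\cdot\mathrm{id},\qquad d\theta_w+\theta_w d=\deg_w\cdot\mathrm{id},
\end{align*}
where each right-hand side acts on a homogeneous form by multiplication by the indicated integer degree. Before using them I would record the bookkeeping point that each $\theta_{c,i}$ and $\theta_w$ is contraction against a multidegree-preserving (Euler) vector field, so it carries every graded piece $\bigl(\Omega_{A[S^{-1}]/\mathbb{C}}^\bullet\bigr)_{c,(w)}$ into itself while lowering form degree by one; in particular $d$ preserves the $\mathrm{Cl}(\mathbf{P}_\Sigma)\oplus\mathbb{Z}$-grading, and hence $\Omega_{A[S^{-1}]/\mathbb{C}}^\bullet=\bigoplus_{c,w}\bigl(\Omega_{A[S^{-1}]/\mathbb{C}}^\bullet\bigr)_{c,(w)}$ is a direct sum of subcomplexes.

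For part (2) I would simply restrict these identities. On $\bigl(\Omega_{A[S^{-1}]/\mathbb{C}}^\bullet\bigr)_0$ every form has $\deg_{c,i}=0$ for all $i$ (because $\{\epsilon_1,\dots,\epsilon_{r-n}\}$ is a $\mathbb{Z}$-basis of $\mathrm{Hom}(\mathrm{Cl}(\mathbf{P}_\Sigma),\mathbb{Z})$ and $\mathrm{Cl}(\mathbf{P}_\Sigma)$ is free, so $\deg_c=0$ is equivalent to the vanishing of all $\deg_{c,i}$), whence $d\theta_{c,i}+\theta_{c,i}d=0$ there; likewise $d\theta_w+\theta_w d=0$ on $\bigl(\Omega_{A[S^{-1}]/\mathbb{C}}^\bullet\bigr)_{(0)}$. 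A form-degree $-1$ operator that anticommutes with $d$ is precisely a cochain map into the shifted complex $\Omega^\bullet[-1]$ (whose differential is $-d$, so that the chain-map condition is exactly the anticommutation relation), which is the assertion of (2).

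For part (1) I would prove the sharper statement that $\bigl(\Omega_{A[S^{-1}]/\mathbb{C}}^\bullet\bigr)_{c,(w)}$ is acyclic whenever $(c,w)\neq(0,0)$. If $w\neq0$, then $\tfrac1w\theta_w$ satisfies $d\cdot\tfrac1w\theta_w+\tfrac1w\theta_w\cdot d=\mathrm{id}$ on that piece, a contracting homotopy; if $c\neq0$, choose $i$ with $\deg_{c,i}=\epsilon_i(c)\neq0$ and use $\tfrac{1}{\deg_{c,i}}\theta_{c,i}$ in the same way. Because cohomology commutes with the direct sum decomposition above, only the $(0,0)$-summand contributes, so each of the four inclusions in the square identifies the cohomology of the smaller complex with that of the larger by discarding acyclic summands; hence all four are quasi-isomorphisms.

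The argument is essentially formal once Lemma \ref{Cayley-theta}(3) is available, so I do not expect a serious obstacle; the only points that demand care are the bookkeeping ones anticipated above. Concretely, I would make sure that each contraction genuinely preserves the multidegree (so the homotopy identity may be applied one homogeneous summand at a time), that the Euler fields and their contractions extend from $A$ to the localization $A[S^{-1}]$ where $S$ is homogeneous of degree $(0,1)$, and that the sign convention for the shift $[-1]$ in part (2) matches the anticommutation relation. None of these is substantive, but they are exactly where a careless write-up could go wrong.
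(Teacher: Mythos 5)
Your argument is correct and is essentially the paper's own proof: both parts are extracted from the homotopy identities $(d\theta_{c,i}+\theta_{c,i}d)\xi=(\deg_{c,i}\xi)\xi$ and $(d\theta_w+\theta_wd)\xi=(\deg_w\xi)\xi$ of Lemma \ref{Cayley-theta}(3), with (1) following because every graded piece of nonzero multidegree is killed by the resulting contracting homotopy and (2) because the right-hand sides vanish on the degree-zero subcomplexes. The paper states this in two lines; you have merely made explicit the bookkeeping (degree-preservation of the contractions, the direct-sum decomposition into subcomplexes, the sign convention for the shift) that the paper leaves implicit.
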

\begin{proof}
By Lemma \ref{Cayley-theta} (3), we get relations for homogeneous $\xi$:
\begin{align*}
(d\theta_{c,i}+\theta_{c,i}d)\xi=(\deg_{c,i}\xi)\xi,\quad(d\theta_w+\theta_wd)\xi=(\deg_w\xi)\xi
\end{align*}
\indent (1) If $d\xi=0$, then $\xi$ is in the image of $d$ unless $(\deg_c\xi,\deg_w\xi)=(0,0)$.\\
\indent (2) The above relations also show that each $\theta$ becomes a cochain map on the subcomplex of homogeneous elements of degree $0$.
\end{proof}
\begin{proposition}\label{Cayley-decomposition} The inclusion $A[S^{-1}]_{(0)}\subseteq A[S^{-1}]$ induces
\begin{align*}
\left(\widehat{\Omega}_{A[S^{-1}]_{(0)}/\mathbb{C}}^\bullet,d\right)\cong\left(\Omega_{A[S^{-1}]_{(0)}/\mathbb{C}}^\bullet,d\right)\cong\ker\left(\xymatrixcolsep{1.5pc}\xymatrix{\theta_w:\left(\Omega_{A[S^{-1}]/\mathbb{C}}^\bullet,d\right)_{(0)} \ar[r] & \left(\Omega_{A[S^{-1}]/\mathbb{C}}^\bullet[-1],d\right)_{(0)}}\right)
\end{align*}
and there is a decomposition of complexes
\begin{align*}
\left(\Omega_{A[S^{-1}]/\mathbb{C}}^\bullet,d\right)_{(0)}\cong\left(\Omega_{A[S^{-1}]_{(0)}/\mathbb{C}}^\bullet,d\right)\oplus\frac{dS}{S}\wedge\left(\Omega_{A[S^{-1}]_{(0)}/\mathbb{C}}^\bullet,d\right)
\end{align*}
\end{proposition}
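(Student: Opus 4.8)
The plan is to reduce everything to the single structural observation that $S$ is a homogeneous unit of weight one in $A[S^{-1}]$, so that the weight grading trivializes after inverting $S$. Write $B:=A[S^{-1}]_{(0)}$ for the weight-zero subring. Since $\deg_w S=1$ and $S$ is invertible, multiplication by $S^m$ is a $B$-linear isomorphism $\bigl(A[S^{-1}]\bigr)_{(0)}\xrightarrow{\sim}\bigl(A[S^{-1}]\bigr)_{(m)}$, whence $A[S^{-1}]=\bigoplus_{m\in\mathbb{Z}}S^mB=B[S,S^{-1}]$ with $S$ transcendental over $B$. The Euler computation $\theta_w\!\left(\frac{dS}{S}\right)=\frac{E_w(S)}{S}=\frac{S}{S}=1$ (using $E_w(S)=\sum_j y_jG_j=S$, i.e. Euler's identity for the weight-one polynomial $S$) is the one nontrivial input; together with $\theta_w|_{\Omega^\bullet_{B/\mathbb{C}}}=0$ (each $b\in B$ has $E_w(b)=(\deg_w b)\,b=0$) it drives the rest of the argument.

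First I would establish the reflexive-hull identification $\widehat{\Omega}^\bullet_{B/\mathbb{C}}\cong\Omega^\bullet_{B/\mathbb{C}}$. The presentation $A[S^{-1}]=B[S,S^{-1}]$ exhibits $A[S^{-1}]$ as a faithfully flat (indeed free) $B$-algebra; since $A[S^{-1}]$ is a localization of the polynomial ring $A=\mathbb{C}[q]$ it is regular, and regularity descends along the flat local homomorphisms $B_{\mathfrak p}\to\bigl(A[S^{-1}]\bigr)_{\mathfrak q}$, so $B$ is regular. Hence $\Omega^\bullet_{B/\mathbb{C}}$ is locally free, in particular reflexive, and the reflexive hull is superfluous. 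Geometrically this is the statement that the weight-$\mathbf{G}_m$ acts \emph{freely} on $\Spec A[S^{-1}]$ (unlike the full group $\mathbf{G}$, whose quotient may only be an orbifold — which is exactly why the hat is needed elsewhere).

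Next I would produce the direct-sum decomposition. From $A[S^{-1}]=B\otimes_{\mathbb{C}}\mathbb{C}[S,S^{-1}]$ the Künneth formula for Kähler differentials gives
\[
\Omega^\bullet_{A[S^{-1}]/\mathbb{C}}=\Bigl(\Omega^\bullet_{B/\mathbb{C}}\otimes_{\mathbb{C}}\mathbb{C}[S,S^{-1}]\Bigr)\oplus\frac{dS}{S}\wedge\Bigl(\Omega^\bullet_{B/\mathbb{C}}\otimes_{\mathbb{C}}\mathbb{C}[S,S^{-1}]\Bigr),
\]
and taking the weight-zero part (where $\mathbb{C}[S,S^{-1}]_{(0)}=\mathbb{C}$) yields $\bigl(\Omega^\bullet_{A[S^{-1}]/\mathbb{C}}\bigr)_{(0)}=\Omega^\bullet_{B/\mathbb{C}}\oplus\frac{dS}{S}\wedge\Omega^\bullet_{B/\mathbb{C}}$. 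To match this with the kernel of $\theta_w$ I write a weight-zero form as $\omega=\alpha+\frac{dS}{S}\wedge\beta$ with $\alpha,\beta\in\Omega^\bullet_{B/\mathbb{C}}$; the anti-derivation property of $\theta_w$ (Lemma \ref{Cayley-theta}(2)) together with the two displayed identities gives $\theta_w\omega=\beta$, so $\ker\theta_w=\Omega^\bullet_{B/\mathbb{C}}$, which is the middle isomorphism. The explicit splitting $\omega=\bigl(\omega-\frac{dS}{S}\wedge\theta_w\omega\bigr)+\frac{dS}{S}\wedge\theta_w\omega$ realizes the decomposition and shows both summands lie in $\ker\theta_w$ (using $\theta_w^2=0$).

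Finally I would check that the two summands are stable under $d$, so that the decomposition is one of complexes. By Proposition \ref{dRconcent}(2) the operator $\theta_w$ is a cochain map on weight-zero forms, hence $d$ preserves $\ker\theta_w=\Omega^\bullet_{B/\mathbb{C}}$; and because $d\!\left(\frac{dS}{S}\right)=0$ one computes $d\bigl(\frac{dS}{S}\wedge\beta\bigr)=-\frac{dS}{S}\wedge d\beta$, which lies in $\frac{dS}{S}\wedge\Omega^\bullet_{B/\mathbb{C}}$ since $d\beta\in\ker\theta_w$. I expect the only genuine obstacle to be the regularity of $B$ underlying the first isomorphism: one must argue carefully that inverting $S$ renders the weight-$\mathbf{G}_m$ action free and the partial quotient smooth, in contrast to the full invariant ring where the hat cannot be removed; the remainder is formal Koszul-type bookkeeping organized by the identity $\theta_w\!\left(\frac{dS}{S}\right)=1$.
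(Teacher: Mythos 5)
Your proof is correct, and although it rests on the same key computation as the paper's (namely $\theta_w(dS)=S$, i.e.\ $\theta_w\!\left(\frac{dS}{S}\right)=1$, combined with the antiderivation property of Lemma \ref{Cayley-theta}), the organization is genuinely different. The paper proves smoothness of $\Spec A[S^{-1}]_{(0)}$ by exhibiting it as an affine open subset of $U_\Sigma\times_{\mathbb{C}}\mathbf{P}^{k-1}_{\mathbb{C}}$, identifies $\ker\theta_w$ with $\Omega_{A[S^{-1}]_{(0)}/\mathbb{C}}^\bullet$ by appealing to the general results on $\mathbf{G}_m$-actions (\eqref{exabc} and Proposition \ref{torusdeg}(5)), and then obtains the splitting by showing that the identity $\theta_w\!\left(\frac{dS}{S}\wedge\xi\right)+\frac{dS}{S}\wedge\theta_w(\xi)=\xi$ forces $\Omega^\bullet_{A[S^{-1}]/\mathbb{C}}=\ker\theta_w+\frac{dS}{S}\wedge\ker\theta_w$ and by checking separately that the two summands intersect trivially. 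You instead isolate the structural fact that $A[S^{-1}]\cong B[S,S^{-1}]$ with $B=A[S^{-1}]_{(0)}$, valid because $S$ is a homogeneous unit of weight one so that multiplication by $S^m$ identifies the graded pieces; this single observation buys you everything at once: regularity (hence smoothness and reflexivity of the K\"ahler forms) of $B$ by faithfully flat descent along $B\to A[S^{-1}]$, the two-summand decomposition by the base-change/K\"unneth formula for K\"ahler differentials of a tensor product, and the kernel identification by the one-line computation $\theta_w\!\left(\alpha+\frac{dS}{S}\wedge\beta\right)=\beta$. This is arguably cleaner and makes transparent why no reflexive hull is needed here (the weight $\mathbf{G}_m$ splits off as a free factor), whereas for the full group $\mathbf{G}$ it cannot be removed. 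The only step you should make explicit is that $B$ is of finite type over $\mathbb{C}$ (it is the invariant ring of a reductive group acting on a finitely generated algebra, or, as in the paper, an affine open of $U_\Sigma\times_{\mathbb{C}}\mathbf{P}^{k-1}_{\mathbb{C}}$), since that is what lets you pass from regularity of the local rings to smoothness over $\mathbb{C}$ and local freeness of $\Omega^1_{B/\mathbb{C}}$; with that remark added, your argument is complete.
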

\begin{proof} Since $\Spec A[S^{-1}]_{(0)}$ is an affine open subset of a smooth $\mathbb{C}$-scheme $U_\Sigma\times_\mathbb{C}\mathbf{P}^{k-1}_\mathbb{C}$, it is smooth so $\Omega_{A[S^{-1}]_{(0)}/\mathbb{C}}^\bullet$ is a complex of locally free $A[S^{-1}]_{(0)}$-modules which are reflexive and consequently $\widehat{\Omega}_{A[S^{-1}]_{(0)}/\mathbb{C}}^\bullet\cong\Omega_{A[S^{-1}]_{(0)}/\mathbb{C}}^\bullet$. The isomorphism $\Omega_{A[S^{-1}]_{(0)}/\mathbb{C}}^\bullet\cong\ker\theta_w$ follows from \eqref{exabc} and Proposition \ref{torusdeg} (5). 
From the equation for $\xi\in\Omega_{A[S^{-1}]/\mathbb{C}}^\bullet$
\begin{align}\label{thetawdecomp}
\theta_w\left(\frac{dS}{S}\wedge\xi\right)+\frac{dS}{S}\wedge\theta_w(\xi) =
\frac{\theta_w(dS)}{S} \wedge \xi - \frac{dS}{S} \wedge \theta_w(\xi) + \frac{dS}{S} \wedge \theta_w(\xi) =\xi
\end{align}
(since $\theta_w(dS)=\theta_w(dS) + d \theta_w(S) = S$),
we deduce that
\begin{align*}
\Omega_{A[S^{-1}]/\mathbb{C}}^\bullet \subseteq \ker\theta_w+\frac{dS}{S}\wedge\ker\theta_w
\end{align*}
as a graded vector space. Because the opposite inclusion clearly holds ($A[S^{-1}]_{(0)} \subset A[S^{-1}]$ and the isomorphism $\Omega_{A[S^{-1}]_{(0)}/\mathbb{C}}^\bullet\cong\ker\theta_w$), we have 
\begin{align*}
\Omega_{A[S^{-1}]/\mathbb{C}}^\bullet = \ker\theta_w+\frac{dS}{S}\wedge\ker\theta_w.
\end{align*}
Moreover, if $\xi$ is contained in the intersection of summands, i.e.
\begin{align*}
\xi\in\ker\theta_w\cap\frac{dS}{S}\wedge\ker\theta_w
\end{align*}
then we may rewrite $\xi$ as
\begin{align*}
\xi=\frac{dS}{S}\wedge\omega,\quad\omega\in\ker\theta_w, \quad \xi \in \ker\theta_w.
\end{align*}
Then
\begin{align*}
0=\theta_w(\xi)=\theta_w(\frac{dS}{S} \wedge \omega) = \frac{\theta_w(dS)}{S}\wedge \omega -dS \wedge \frac{\theta_w(S)}{S} = \omega.
\end{align*}
Thus we conclude that $\xi=0$. Therefore,
\begin{align*}
\Omega_{A[S^{-1}]/\mathbb{C}}^\bullet=\ker\theta_w\oplus\frac{dS}{S}\wedge\ker\theta_w,
\end{align*}
Since the restriction of $\theta_w$ on the subspace of $\deg_w=0$ induces a cochain map, we get the desired decomposition.
\end{proof}

By Proposition \ref{Cayley-decomposition}, $\theta_w$ restricts to a cochain map
\begin{align*}
\xymatrix{\theta_w:\left(\Omega_{A[S^{-1}]/\mathbb{C}}^\bullet,d\right)_{0,(0)} \ar[r] & \left(\Omega_{A[S^{-1}]_{(0)}/\mathbb{C}}^\bullet[-1],d\right)_0}
\end{align*}
which admits a section
\begin{align*}
\xymatrix{\displaystyle\frac{dS}{S}\wedge-:\left(\Omega_{A[S^{-1}]_{(0)}/\mathbb{C}}^\bullet[-1],d\right)_0 \ar[r] & \left(\Omega_{A[S^{-1}]/\mathbb{C}}^\bullet,d\right)_{0,(0)}}
\end{align*}
On the other hand, by Lemma \ref{Cayley-theta}, there is a cochain map
\begin{align*}
\xymatrix{\theta_c:=\theta_{c,r-n}\circ\cdots\circ\theta_{c,1}:\left(\Omega_{A[S^{-1}]_{(0)}/\mathbb{C}}^\bullet,d\right)_0 \ar[r] & \left(\widehat{\Omega}_{A[S^{-1}]^\mathbf{G}/\mathbb{C}}^\bullet[n-r],d\right)}
\end{align*}

\begin{proposition}\label{tctw-surjective} The cochain map
\begin{align}\label{tctw}
\xymatrix{\theta_c\circ\theta_w:\left(\Omega_{A[S^{-1}]/\mathbb{C}}^\bullet,d\right)_{0,(0)} \ar[r] & \left(\widehat{\Omega}_{A[S^{-1}]^\mathbf{G}/\mathbb{C}}^\bullet,d\right)[n-r-1]}.
\end{align}
induces a surjection on the cohomology.
\end{proposition}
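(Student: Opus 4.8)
The plan is to prove surjectivity by explicitly lifting de Rham classes of the target, handling the two families of contractions separately and in sequence. Since $\theta_c\circ\theta_w$ is a composite of cochain maps, $(\theta_c\circ\theta_w)_\ast=(\theta_c)_\ast\circ(\theta_w)_\ast$, so it suffices to invert $\theta_w$ and then $\theta_c$ on cohomology. The first step is free: by Proposition \ref{Cayley-decomposition} the operator $\frac{dS}{S}\wedge-$ is a cochain section of $\theta_w$, because $\frac{dS}{S}$ is $d$-closed and satisfies $\theta_w\!\left(\frac{dS}{S}\right)=1$ while $\theta_{c,i}\!\left(\frac{dS}{S}\right)=\deg_{c,i}(S)=0$ for every $i$ (as $\deg_c S=0$). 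Hence $(\theta_w)_\ast$ is split surjective, and the problem reduces to showing that $\theta_c=\theta_{c,r-n}\circ\cdots\circ\theta_{c,1}$ induces a surjection $H^\bullet\!\left(\left(\Omega^\bullet_{A[S^{-1}]_{(0)}/\mathbb{C}}\right)_0\right)\to H^\bullet\!\left(\widehat\Omega^\bullet_{A[S^{-1}]^{\mathbf{G}}/\mathbb{C}}\right)$.

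For the $\theta_c$ step I would fix a $d$-closed basic representative $\eta$ (so $\theta_{c,i}\eta=0$ for all $i$ and $\theta_w\eta=0$) of a class in the target and build a preimage. Because $G_i\in B_\Sigma$, the locus $\Spec A[S^{-1}]$ lies in $U_\Sigma\times_{\mathbb{C}}(\mathbf{A}^k\setminus0)$ and the $\mathbf{G}$-action there is free with geometric quotient (diagram \eqref{Cayley-diagram}); dualizing the nondegenerate orbit map and passing to the degree $(0,(0))$-graded piece, which is exact since $\mathbf{G}$ is a torus, yields invariant $1$-forms $\omega_1,\ldots,\omega_{r-n}$ with $\theta_{c,j}(\omega_i)=\delta_{ij}$ and $\theta_w(\omega_i)=0$. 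Setting $\Omega:=\omega_1\wedge\cdots\wedge\omega_{r-n}$ and using the derivation property and anticommutation relations of Lemma \ref{Cayley-theta}(1)--(2), one checks $\theta_c(\Omega\wedge\eta)=\pm\eta$ on the nose. Thus every basic form is hit at the cochain level; the only defect is that $\Omega\wedge\eta$ need not be $d$-closed.

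The remaining task is to correct $\Omega\wedge\eta$ to a genuine cocycle with the same image. Since $\theta_c$ is a cochain map and $\eta$ is closed, the defect $d(\Omega\wedge\eta)=d\Omega\wedge\eta$ lies in $\ker\theta_c$. I would remove it by a descending induction on the $\mathrm{Cl}(\mathbf{P}_\Sigma)$-grading, solving $d\xi'=d(\Omega\wedge\eta)$ with $\xi'\in\ker\theta_c$ one graded piece at a time: on a piece of nonzero $\deg_{c,i}$ the homotopy relation $(d\theta_{c,i}+\theta_{c,i}d)\xi=(\deg_{c,i}\xi)\,\xi$ of Lemma \ref{Cayley-theta}(3) gives an explicit primitive, while the concentration result of Proposition \ref{dRconcent} lets one discard the pieces not of total degree zero. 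The corrected form $\xi:=\Omega\wedge\eta-\xi'$ is then $d$-closed, invariant, and satisfies $\theta_c\xi=\pm\eta$, providing the desired lift.

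The hard part is precisely this last correction. In the $\theta_w$-direction one has the \emph{closed} connection form $\frac{dS}{S}$ (equivalently, $S$ is a unit in $A[S^{-1}]$), so no correction is needed; but in the $\theta_{c,i}$-directions there is in general no closed connection form, since for $k>1$ the individual $G_i$ are not units in $A[S^{-1}]$ (only $S=\sum_j y_jG_j$ is), so $d\Omega\neq0$ and the Euler-vector-field homotopies must do the work. The main obstacle is therefore to show that the inductive correction stays inside the common kernel of the remaining contractions $\theta_{c,1},\ldots,\theta_{c,r-n}$ and terminates. This is exactly the multi-grading difficulty flagged in the introduction for $m\geq3$, and it is what ultimately confines the full comparison to the limited statement of Theorem \ref{TdRCayley-comparison}.
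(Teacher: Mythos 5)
Your reduction of the problem to $\theta_c$ via the cochain section $\frac{dS}{S}\wedge-$ of $\theta_w$ is fine, and you have correctly located the obstruction: there is no closed connection form in the $\theta_{c,i}$-directions inside $\Omega^\bullet_{A[S^{-1}]/\mathbb{C}}$ because the $x_\rho$ are not inverted there. But your proposed repair does not close the gap. The form $\Omega\wedge\eta$ you build is by construction of multidegree $(0,(0))$, hence so is the defect $d(\Omega\wedge\eta)=d\Omega\wedge\eta$; on that graded piece the homotopy relation $(d\theta_{c,i}+\theta_{c,i}d)\xi=(\deg_{c,i}\xi)\,\xi$ is identically zero and produces no primitive, and the concentration result of Proposition \ref{dRconcent} discards nothing since everything already sits in total degree zero. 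So the "descending induction on the $\mathrm{Cl}(\mathbf{P}_\Sigma)$-grading" never gets started, and you are left needing exactly what you admit you cannot show: that $d(\Omega\wedge\eta)$ has a primitive lying in $\ker\theta_c$. A secondary problem is your assertion that the $\mathbf{G}$-action on $\Spec A[S^{-1}]$ is free: the paper explicitly notes that for non-smooth simplicial $\Sigma$ the quotient $U_\Sigma\to\mathbf{P}_\Sigma$ is not a torsor (there are finite stabilizers), so the construction of the $\omega_i$ by "dualizing the orbit map" is not justified as stated.

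The paper's proof supplies the missing idea by changing the model of the complex. Since $\Spec A[S^{-1}]\to\Spec A[S^{-1}]^{\mathbf{G}}$ is a toric morphism, one may replace both de Rham complexes by their Ishida complexes $\CMcal{L}^\bullet$, which are quasi-isomorphic to them by \cite[Theorem 3.6]{TOCB} and whose components are \emph{log} de Rham complexes along intersections of torus-invariant divisors. In that model the closed log forms $\frac{dx_1}{x_1},\ldots,\frac{dx_{r-n}}{x_{r-n}}$ (for rays whose divisor classes give a basis of $\mathrm{Cl}(\mathbf{P}_\Sigma)_\mathbb{C}$) do exist, and
\begin{align*}
\frac{dS}{S}\wedge\frac{dx_1}{x_1}\wedge\cdots\wedge\frac{dx_{r-n}}{x_{r-n}}\wedge-
\end{align*}
is an honest cochain section of $\theta_c\circ\theta_w$, giving surjectivity on cohomology with no correction step at all. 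If you want to salvage your approach, the move you are missing is precisely this passage to the log/Ishida model where the connection forms become closed.
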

\begin{proof} For a toric variety $Y_\Phi$ coming from a fan $\Phi$, denote by $\CMcal{L}_{Y_\Phi/\mathbb{C}}^{\bullet,\bullet}$ its Ishida complex (cf. \cite[p.120]{TOCB}). Since $\Spec A[S^{-1}]\rightarrow\Spec A[S^{-1}]^\mathbf{G}$ is a toric morphism, it induces a commutative square
\begin{align*}
\xymatrix{
\left(\Omega_{A[S^{-1}]/\mathbb{C}}^\bullet\right)_{0,(0)} \ar[r] & \left(\CMcal{L}_{A[S^{-1}]/\mathbb{C}}^\bullet\right)_{0,(0)} \\
\widehat{\Omega}_{A[S^{-1}]^\mathbf{G}/\mathbb{C}}^\bullet \ar[r] \ar[u] & \CMcal{L}_{A[S^{-1}]^\mathbf{G}/\mathbb{C}}^\bullet \ar[u]
}
\end{align*}
where the rows are quasi-isomorphisms by \cite[Theorem 3.6]{TOCB}. Here, taking $(\deg_c=0,\deg_w=0)$-subcomplex does not change the cohomology because the relations in Lemma \ref{Cayley-theta} are still valid so the proof of Proposition \ref{dRconcent} works for $\CMcal{L}_{A[S^{-1}]/\mathbb{C}}^\bullet$.\\
\indent Since $\theta$'s commute with the de Rham differential,
\begin{align*}
\xymatrix{\theta_c\circ\theta_w:\left(\CMcal{L}_{A[S^{-1}]/\mathbb{C}}^{\bullet,q}\right)_{0,(0)} \ar[r] & \CMcal{L}_{A[S^{-1}]^\mathbf{G}/\mathbb{C}}^{\bullet,q}[n-r-1]}
\end{align*}
is well-defined. Moreover, since $\theta$'s commute with themselves, $\theta_c\circ\theta_w$ above gather to define
\begin{align*}
\xymatrix{
\left(\Omega_{A[S^{-1}]/\mathbb{C}}^\bullet\right)_{0,(0)} \ar[d]_-{\theta_c\circ\theta_w} \ar[r] & \left(\CMcal{L}_{A[S^{-1}]/\mathbb{C}}^\bullet\right)_{0,(0)} \ar[d]^-{\theta_c\circ\theta_w} \\
\widehat{\Omega}_{A[S^{-1}]^\mathbf{G}/\mathbb{C}}^\bullet[n-r-1] \ar[r] & \CMcal{L}_{A[S^{-1}]^\mathbf{G}/\mathbb{C}}^\bullet[n-r-1]
}
\end{align*}
Moreover, each component of the Ishida complex is merely the log de Rham complex of intersections of torus-invariant divisors, we have
\begin{align*}
\CMcal{L}_{A[S^{-1}]^\mathbf{G}/\mathbb{C}}^\bullet=\ker\left(\xymatrix{\theta_c\circ\theta_w:\left(\CMcal{L}_{A[S^{-1}]/\mathbb{C}}^\bullet\right)_{0,(0)} \ar[r] & \CMcal{L}_{A[S^{-1}]^\mathbf{G}/\mathbb{C}}^\bullet[n-r-1]}\right)
\end{align*}
by Proposition \ref{reductive-quotient-de Rham}. Now, enumerate $\Sigma(1)=\{\rho_1,\cdots,\rho_r\}$ in such a way that $\{\rho_1,\cdots,\rho_{r-n}\}$ corresponds to the set $\{D_{\rho_1},\cdots,D_{\rho_{r-n}}\}$ of divisors defining a $\mathbb{C}$-basis for $\mathrm{Cl}(\mathbf{P}_\Sigma)_\mathbb{C}:=\mathrm{Cl}(\mathbf{P}_\Sigma)\otimes_\mathbb{Z}\mathbb{C}$. Following the identification (\ref{ray-dual}), we get a commutative diagram of $\mathbb{C}$-vector spaces:
\begin{align*}
\xymatrix{
\mathrm{Hom}(\mathrm{Cl}(\mathbf{P}_\Sigma),\mathbb{C}) \ar[r] & \mathrm{Hom}(\mathbb{Z}^{\oplus\Sigma(1)},\mathbb{C}) & & \\
\mathrm{Cl}(\mathbf{P}_\Sigma)_\mathbb{C} \ar@{-->}[r] \ar[u]^-\wr & \displaystyle\bigoplus_{\rho\in\Sigma(1)}\mathbb{C} D_\rho \ar[u]^-\wr & & [D_{\rho_i}] \ar@{|->}[r] & D_{\rho_i}
}
\end{align*}
Also, denote $x_i:=x_{\rho_i}$ for $1\leq i\leq r$. Consequently,
\begin{align*}
\xymatrix{\displaystyle\frac{dS}{S}\wedge\frac{dx_1}{x_1}\wedge\cdots\wedge\frac{dx_{r-n}}{x_{r-n}}\wedge:\CMcal{L}_{A[S^{-1}]^\mathbf{G}/\mathbb{C}}^\bullet[n-r-1] \ar[r] & \left(\CMcal{L}_{A[S^{-1}]/\mathbb{C}}^\bullet\right)_{0,(0)}}
\end{align*}
give a section of $\theta_c\circ\theta_w$. This shows that $\theta_c\circ\theta_w$ gives a cohomology surjection.
\end{proof}

\begin{proposition}\label{tctw-rank1-isom} If $\mathrm{rank}(\mathrm{Cl}(\mathbf{P}_\Sigma))=1$, then
\begin{align*}
\xymatrix{\theta_c:\left(\Omega_{A[S^{-1}]_{(0)}/\mathbb{C}}^\bullet,d\right)_0 \ar[r] & \left(\widehat{\Omega}_{A[S^{-1}]^\mathbf{G}/\mathbb{C}}^\bullet[-1],d\right)}
\end{align*}
induces an isomorphism for every $i\in\mathbb{Z}$:
\begin{align*}
H^i\left(\Omega_{A[S^{-1}]_{(0)}/\mathbb{C}}^\bullet,d\right)\cong H^i\left(\left(\Omega_{A[S^{-1}]_{(0)}/\mathbb{C}}^\bullet,d\right)_0\right)\cong H_{\widehat{\mathrm{dR}}}^i(\mathbf{P}(\CMcal{E})\setminus X_S)\oplus H_{\widehat{\mathrm{dR}}}^{i-1}(\mathbf{P}(\CMcal{E})\setminus X_S)
\end{align*}
where $\theta_c$ is identified with the projection onto the second factor. Consequently, we get an isomorphism
\begin{align*}
H^i\left(\Omega_{A[S^{-1}]/\mathbb{C}}^\bullet,d\right)\cong H_{\widehat{\mathrm{dR}}}^i(\mathbf{P}(\CMcal{E})\setminus X_S)\oplus H_{\widehat{\mathrm{dR}}}^{i-1}(\mathbf{P}(\CMcal{E})\setminus X_S)^{\oplus2}\oplus H_{\widehat{\mathrm{dR}}}^{i-2}(\mathbf{P}(\CMcal{E})\setminus X_S)
\end{align*}
for every $i\in\mathbb{Z}$.
\end{proposition}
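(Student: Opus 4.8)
The plan is to treat the contraction $\theta_c$ exactly as $\theta_w$ was treated in Proposition \ref{Cayley-decomposition}, exploiting that $\mathrm{rank}(\mathrm{Cl}(\mathbf{P}_\Sigma))=1$ forces $r-n=1$: then $\mathbf{G}_\Sigma\cong\mathbf{G}_m$ and $\theta_c=\theta_{c,1}$ is a \emph{single} contraction, hence a graded derivation of the wedge product (Lemma \ref{Cayley-theta}(2)) with $\theta_c^2=0$ (Lemma \ref{Cayley-theta}(1)), rather than an iterated composite. First I would manufacture the analogue of the angular form $\tfrac{dS}{S}$. Choosing $x_1$ as in the proof of Proposition \ref{tctw-surjective}, so that $D_{\rho_1}$ spans $\mathrm{Cl}(\mathbf{P}_\Sigma)_\mathbb{C}$, the scalar $d_1:=\deg_c x_1$ is nonzero, and I set $\omega_c:=\tfrac{1}{d_1}\tfrac{dx_1}{x_1}$. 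This is a closed $1$-form with $\deg_c\omega_c=\deg_w\omega_c=0$, with $\theta_w(\omega_c)=0$ (because $E_w$ has no $x$-component) and $\theta_c(\omega_c)=1$. The derivation property then yields the homotopy identity $\theta_c(\omega_c\wedge\xi)+\omega_c\wedge\theta_c(\xi)=\xi$, the exact counterpart of \eqref{thetawdecomp}.

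Next I would transcribe the proof of Proposition \ref{Cayley-decomposition} with $(\theta_c,\omega_c)$ in place of $(\theta_w,\tfrac{dS}{S})$. The homotopy identity together with $\theta_c^2=0$ gives a direct sum of graded vector spaces
\begin{align*}
\left(\Omega_{A[S^{-1}]_{(0)}/\mathbb{C}}^\bullet\right)_0=\ker\theta_c\oplus\omega_c\wedge\ker\theta_c,
\end{align*}
and this is a decomposition of complexes since $d\omega_c=0$ and, on the $\deg_c=0$ part, $(d\theta_c+\theta_c d)\xi=(\deg_c\xi)\xi=0$ (see the proof of Proposition \ref{dRconcent}). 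By the characterization recalled at the end of subsection \ref{sec2.3} — $\mathbf{G}$-invariant forms ($\deg_c=\deg_w=0$) annihilated by all $\theta_{c,i}$ and $\theta_w$ — the summand $\ker\theta_c$ is precisely $\widehat{\Omega}_{A[S^{-1}]^\mathbf{G}/\mathbb{C}}^\bullet$. As $\omega_c$ is a $1$-form, passing to cohomology and invoking \eqref{ciso} gives
\begin{align*}
H^i\!\left(\left(\Omega_{A[S^{-1}]_{(0)}/\mathbb{C}}^\bullet\right)_0\right)\cong H_{\widehat{\mathrm{dR}}}^i(\mathbf{P}(\CMcal{E})\setminus X_S)\oplus H_{\widehat{\mathrm{dR}}}^{i-1}(\mathbf{P}(\CMcal{E})\setminus X_S),
\end{align*}
with $\theta_c$ realized as the projection onto the second factor, since $\theta_c(\eta_0+\omega_c\wedge\eta_1)=\eta_1$ for $\eta_0,\eta_1\in\ker\theta_c$. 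Finally, the relation $(d\theta_c+\theta_c d)\xi=(\deg_c\xi)\xi$ shows, by the degree argument of Proposition \ref{dRconcent}(1), that the inclusion of the $\deg_c=0$ subcomplex is a quasi-isomorphism (here one uses $\theta_c\theta_w+\theta_w\theta_c=0$, so that $\theta_c$ preserves $\ker\theta_w$), yielding $H^i(\Omega_{A[S^{-1}]_{(0)}/\mathbb{C}}^\bullet)\cong H^i((\Omega_{A[S^{-1}]_{(0)}/\mathbb{C}}^\bullet)_0)$.

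For the last isomorphism I would stack two such splittings. Proposition \ref{dRconcent}(1) applied to $\theta_w$ makes $(\Omega_{A[S^{-1}]/\mathbb{C}}^\bullet)_{(0)}\hookrightarrow\Omega_{A[S^{-1}]/\mathbb{C}}^\bullet$ a quasi-isomorphism, and Proposition \ref{Cayley-decomposition} splits $(\Omega_{A[S^{-1}]/\mathbb{C}}^\bullet)_{(0)}=\Omega_{A[S^{-1}]_{(0)}/\mathbb{C}}^\bullet\oplus\tfrac{dS}{S}\wedge\Omega_{A[S^{-1}]_{(0)}/\mathbb{C}}^\bullet$ with $\tfrac{dS}{S}$ a closed $1$-form. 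Hence $H^i(\Omega_{A[S^{-1}]/\mathbb{C}}^\bullet)\cong H^i(\Omega_{A[S^{-1}]_{(0)}/\mathbb{C}}^\bullet)\oplus H^{i-1}(\Omega_{A[S^{-1}]_{(0)}/\mathbb{C}}^\bullet)$, and substituting the isomorphism of the previous paragraph into both summands produces
\begin{align*}
H^i(\Omega_{A[S^{-1}]/\mathbb{C}}^\bullet)\cong H_{\widehat{\mathrm{dR}}}^i(\mathbf{P}(\CMcal{E})\setminus X_S)\oplus H_{\widehat{\mathrm{dR}}}^{i-1}(\mathbf{P}(\CMcal{E})\setminus X_S)^{\oplus2}\oplus H_{\widehat{\mathrm{dR}}}^{i-2}(\mathbf{P}(\CMcal{E})\setminus X_S).
\end{align*}

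The load-bearing point, and the only place the rank-one hypothesis is truly used, is the clean single-contraction splitting: because $r-n=1$, $\theta_c$ is one derivation admitting the globally defined closed section $\omega_c$ with $\theta_c(\omega_c)=1$, so the bundle decomposition has exactly two summands and $\theta_c$ is literally a projection. I expect the main technical care to lie in the compatibility bookkeeping — checking that $\omega_c\wedge\ker\theta_c$ remains inside $\ker\theta_w=\Omega_{A[S^{-1}]_{(0)}/\mathbb{C}}^\bullet$, which is exactly why the normalization $\theta_w(\omega_c)=0$ is imposed, and that $\ker\theta_c$ coincides with the reflexive hull rather than being strictly larger. For $r-n\geq2$ the operator $\theta_c=\theta_{c,r-n}\circ\cdots\circ\theta_{c,1}$ is no longer a single derivation and this splitting degenerates, which is consistent with only a partial comparison being available in the general case.
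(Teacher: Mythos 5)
Your overall architecture (show the inclusion of the $\deg_c=0$ subcomplex is a quasi-isomorphism, split off a rank-one contraction, then stack with the $\theta_w$-splitting of Proposition \ref{Cayley-decomposition}) matches the intended shape of the result, but the load-bearing step fails: the form $\omega_c=\tfrac{1}{d_1}\tfrac{dx_1}{x_1}$ is \emph{not} an element of $\Omega_{A[S^{-1}]/\mathbb{C}}^1$. Only $S$ is inverted in $A[S^{-1}]$; the variable $x_1$ is not a unit there, so $\tfrac{dx_1}{x_1}$ is merely a logarithmic form along the divisor $\{x_1=0\}$, which has not been removed from $\Spec A[S^{-1}]$. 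This is exactly the asymmetry with the $\theta_w$ case: $\tfrac{dS}{S}$ is a genuine regular form because $S$ has been inverted, which is why Proposition \ref{Cayley-decomposition} gives an honest direct-sum decomposition of complexes for $\theta_w$, whereas no global closed $1$-form $\omega_c$ with $\theta_c(\omega_c)=1$ exists in $\left(\Omega_{A[S^{-1}]_{(0)}/\mathbb{C}}^1\right)_0$. (In the proof of Proposition \ref{tctw-surjective} the forms $\tfrac{dx_i}{x_i}$ are used only inside the Ishida complex $\CMcal{L}^\bullet$, where logarithmic poles along torus-invariant divisors are permitted; that argument yields surjectivity on cohomology, not a splitting of the de Rham complex itself.)

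The paper's proof repairs precisely this defect. Since $\mathrm{rank}(\mathrm{Cl}(\mathbf{P}_\Sigma))=1$ forces $d_i:=\deg_cG_i>0$, one covers $\Spec A[S^{-1}]$ by the open sets $\Spec A_i[S^{-1}]$ with $A_i=A[(y_iG_i)^{-1}]$ (these cover because $S=\sum_iy_iG_i$ is invertible, so some $y_iG_i$ is nonvanishing at each point), and on each piece the form $\tfrac{1}{d_i}\tfrac{dG_i}{G_i}$ \emph{is} regular and furnishes a local section of $\theta_c$ as in \eqref{rankone-thetasection}. These local sections do not glue to a global $1$-form, but they do assemble, via the $0$-coskeletal cosimplicial algebras and Proposition \ref{cosimplicial-resolution}, into a section of $\theta_c$ on the associated \v{C}ech--de Rham complexes, which computes the same cohomology; this is enough to split the cohomology (though not the complex) into the two claimed summands. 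So your argument needs to be rerouted through this local-to-global step; as written, the decomposition $\left(\Omega_{A[S^{-1}]_{(0)}/\mathbb{C}}^\bullet\right)_0=\ker\theta_c\oplus\omega_c\wedge\ker\theta_c$ does not make sense inside the algebraic de Rham complex.
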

\begin{proof} If $\mathrm{Hom}(\mathrm{Cl}(\mathbf{P}_\Sigma),\mathbb{Z})\cong\mathbb{Z}$, then there is only one $\theta_c$ with the corresponding $\deg_c$. In this special case, $d_i:=\deg_cG_i>0$ for every $i=1,\cdots,k$. Denote
\begin{align*}
A_i:=A[(y_iG_i)^{-1}],\quad i=1,\cdots,k
\end{align*}
so that we have
\begin{align*}
A_i[S^{-1}]^\mathbf{G}=A\left[S^{-1},(y_iG_i)^{-1}\right]^\mathbf{G}\cong A[S^{-1}]^\mathbf{G}\left[\frac{S}{y_iG_i}\right]
\end{align*}
\begin{align*}
A_i[S^{-1}]_{(0)}=A\left[S^{-1},(y_iG_i)^{-1}\right]_{(0)}\cong A[S^{-1}]_{(0)}\left[\frac{S}{y_iG_i}\right].
\end{align*}
Moreover, $\theta_c$ over $\mathbf{P}(\CMcal{E})\setminus(X_S\cup X_{y_iG_i})\cong\Spec A_i[S^{-1}]^\mathbf{G}$ admits a section given by
\begin{align}\label{rankone-thetasection}
\xymatrix{\displaystyle\frac{1}{d_i}\frac{dG_i}{G_i}\wedge-:\left(\widehat{\Omega}_{A_i[S^{-1}]^\mathbf{G}/\mathbb{C}}^\bullet,d\right)[-1] \ar[r] & \left(\Omega_{A_i[S^{-1}]_{(0)}/\mathbb{C}}^\bullet,d\right)_0}
\end{align}
Now the open coverings
\begin{align*}
\left\{\xymatrix{\Spec A_i[S^{-1}]^\mathbf{G} \ar[r] & \Spec A[S^{-1}]^\mathbf{G}}\right\}_{i=1,\cdots,k}\quad\left\{\xymatrix{\Spec A_i[S^{-1}]_{(0)} \ar[r] & \Spec A[S^{-1}]_{(0)}}\right\}_{i=1,\cdots,k}
\end{align*}
give $0$-coskeletal cosimplicial algebras $A[S^{-1}]^\mathbf{G}\rightarrow A^\bullet[S^{-1}]^\mathbf{G}$ and $A[S^{-1}]_{(0)}\rightarrow A^\bullet[S^{-1}]_{(0)}$ respectively. Taking the K\"ahler differentials, we get a commutative diagram of simplicial de Rham complexes:
\begin{align*}
\xymatrix{
\left(\Omega_{A[S^{-1}]_{(0)}/\mathbb{C}}^\bullet,d\right) \ar[r] & \left(\Omega_{A^\bullet[S^{-1}]_{(0)}/\mathbb{C}}^\bullet,d\right) \\
\left(\Omega_{A[S^{-1}]_{(0)}/\mathbb{C}}^\bullet,d\right)_0 \ar[d]_-{\theta_c} \ar[r] \ar@{^(->}[u] & \left(\Omega_{A^\bullet[S^{-1}]_{(0)}/\mathbb{C}}^\bullet,d\right)_0 \ar[d]_-{\theta_c} \ar@{^(->}[u] \\
\left(\widehat{\Omega}_{A[S^{-1}]^\mathbf{G}/\mathbb{C}}^\bullet,d\right)[-1] \ar[r] & \left(\widehat{\Omega}_{A^\bullet[S^{-1}]^\mathbf{G}/\mathbb{C}}^\bullet,d\right)[-1]
}
\end{align*}
where the $\theta_c$ in the right column is given by the restriction of the left column. The inclusions are quasi-isomorphisms by Proposition \ref{dRconcent}. Since the cosimplicial algebras come from Zariksi open coverings, the first and the third rows are quasi-isomorphisms. Consequently, the second row is a quasi-isomorphism as well. Moreover, (\ref{rankone-thetasection}) gives a map
\begin{align*}
\xymatrix{\displaystyle\prod_{i=1}^k\frac{1}{d_i}\frac{dG_i}{G_i}\wedge-:\prod_{i=1}^k\left(\widehat{\Omega}_{A_i[S^{-1}]^\mathbf{G}/\mathbb{C}}^\bullet,d\right)[-1] \ar[r] & \displaystyle\prod_{i=1}^k\left(\Omega_{A_i[S^{-1}]_{(0)}/\mathbb{C}}^\bullet,d\right)_0}
\end{align*}
at cosimplicial degree $0$ which extends uniquely to the whole cosimplicial de Rham complexes to give a section of $\theta_c$ on the associated \v{C}ech-de Rham complexes by Proposition \ref{cosimplicial-resolution}. Since
\begin{align*}
\left(\widehat{\Omega}_{A[S^{-1}]^\mathbf{G}/\mathbb{C}}^\bullet,d\right)=\ker\left(\xymatrix{\theta_c:\left(\Omega_{A[S^{-1}]_{(0)}/\mathbb{C}}^\bullet,d\right)_0 \ar[r] & \left(\Omega_{A[S^{-1}]_{(0)}/\mathbb{C}}^\bullet[-1],d\right)_0}\right)
\end{align*}
by Proposition \ref{Cayley-decomposition} and Proposition \ref{reductive-quotient-de Rham}, we obtain the assertion.
\end{proof}

\subsection{Twisted de Rham complexes}\label{sec2.4}
In this section, we will describe $H_{\widehat{\mathrm{dR}}}^\bullet(\mathbf{P}(\CMcal{E})\setminus X_S)$ in terms of the twisted de Rham complex
\begin{align*}
\left(\Omega_{A/\mathbb{C}}^\bullet,D_S:=d+dS\wedge-\right)
\end{align*}
which is graded by
\begin{align*}
\deg dq_i:=\deg q_i,\quad i=1,\cdots,r+k.
\end{align*}
To make the argument clear, we first consider the case where $\Sigma$ is smooth. In this case, $\mathbf{P}_\Sigma$ and $\mathbf{P}(\CMcal{E})$ become smooth so from Corollary \ref{refldR} we get
\begin{align*}
H_{\widehat{\mathrm{dR}}}^\bullet(\mathbf{P}(\CMcal{E})\setminus X_S)\cong H_\mathrm{dR}^\bullet(\mathbf{P}(\CMcal{E})\setminus X_S).
\end{align*}
Moreover, the quotient map $U_\Sigma\rightarrow\mathbf{P}_\Sigma$ becomes a $\mathbf{G}_\Sigma$-torsor\footnote{If $\Sigma$ is not smooth, this fails.} (cf. \cite[p.293]{BatCox}). Hence the quotient map $\Spec A[S^{-1}]\rightarrow\mathbf{P}(\CMcal{E})\setminus X_S$ becomes a $\mathbf{G}$-torsor, i.e. a principal $\mathbf{G}$-bundle (recall that $\mathbf{G}=\mathbf{G}_\Sigma\times_\mathbb{C}\mathbf{G}_m$). Since $\mathbf{G}$ is an affine scheme, the argument of Corollary \ref{dR-affine-bundle} works to give an isomorphism:
\begin{align}\label{torsor-coh}
H_\mathrm{dR}^\bullet(\Spec A[S^{-1}])\cong H_\mathrm{dR}^\bullet(\mathbf{P}(\CMcal{E})\setminus X_S)\otimes_\mathbb{C} H_\mathrm{dR}^\bullet(\mathbf{G}).
\end{align}
which preserves the $\mathbf{G}$-action where $\mathbf{G}$ acts on $H_\mathrm{dR}^\bullet(\mathbf{G})$ by multiplication. Consequently, the quotient map $\Spec A[S^{-1}]\rightarrow\mathbf{P}(\CMcal{E})\setminus X_S$ induces an isomorphism
\begin{align*}
H_\mathrm{dR}^\bullet(\mathbf{P}(\CMcal{E})\setminus X_S)\cong H_\mathrm{dR}^\bullet(\Spec A[S^{-1}])^{\mathbf{G}}
\end{align*}
Next, we consider the following map.
\begin{definition}\label{rho-defn} Define the $\mathbb{C}$-linear map
\begin{align*}
\xymatrix{\rho_S:\left(\Omega_{A/\mathbb{C}}^\bullet,D_S\right)_{0,(+)} \ar[r] & \left(\Omega_{A[S^{-1}]/\mathbb{C}}^\bullet,d\right)_{0,(0)}}
\end{align*}
by the following formula on $\deg$-homogeneous elements:
\begin{align*}
\rho_S(\xi):=(-1)^{(\deg_w\xi-1)!}\frac{\xi}{S^{\deg_w\xi}}
\end{align*}
\end{definition}

\begin{remark} 
We cannot extend $\rho_S$ to $(\Omega_{A/\mathbb{C}}^\bullet,D_S)_0$.
In order to extend $\rho_S$ to the $\deg_c=0$ complex, we have to choose the value manually, since $(-1)!$ is not a well-defined number. Since $\Sigma$ is a simplicial fan, $1\in R_\Sigma$ is the only $(\deg_c,\deg_w)=(0,0)$ element up to scalar multiplication by $\mathbb{C}$. Hence it suffices to consider $\rho_S(1)$ only. For $\rho_S$ to be a cochain map, $\rho_S(1)$ must satisfy
\begin{align*}
d\rho_S(1)=\rho_S(D_S(1))=\rho_S(dS)=\frac{dS}{S}
\end{align*}
so $\rho_S(1)=\log S$. However, this is impossible in $A[S^{-1}]$.
\end{remark}

\begin{remark}\label{rho-restriction}
Note that $\deg_wS=1$, $\deg_cS=0$ and hence
\begin{align}
\left(\Omega_{A/\mathbb{C}}^{\bullet>0},D_S\right)_0\subseteq\left(\Omega_{A/\mathbb{C}}^\bullet,D_S\right)_{0,(+)}.
\end{align}
Consequently, $\rho_S$ in Definition \ref{rho-defn} restricts to
\begin{align*}
\xymatrix{\displaystyle\rho_S:\left(\Omega_{A/\mathbb{C}}^{\bullet>0},D_S\right)_0 \ar[r] & \displaystyle\left(\Omega_{A[S^{-1}]/\mathbb{C}}^{\bullet>0},d\right)_{0,(0)}.}
\end{align*}
\end{remark}

By Proposition \ref{dRconcent}, the inclusion
\begin{align*}
\xymatrix{\left(\Omega_{A[S^{-1}]/\mathbb{C}}^\bullet,d\right)_{0,(0)} \ar@{^(->}[r] & \left(\Omega_{A[S^{-1}]/\mathbb{C}}^\bullet,d\right)}
\end{align*}
is a quasi-isomorphism. On the other hand, taking the contraction with Euler vector fields (\ref{tctw}), we get
\begin{align*}
\xymatrix{\theta_c\circ\theta_w:\left(\Omega_{A[S^{-1}]/\mathbb{C}}^\bullet,d\right)_{0,(0)} \ar[r] & \left(\Omega_{A[S^{-1}]^\mathbf{G}/\mathbb{C}}^\bullet,d\right)[-r+n-1]}
\end{align*}
which on the cohomology induces the projection onto the identity component in (\ref{torsor-coh}):
\begin{align*}
\xymatrix{H_\mathrm{dR}^\bullet(\Spec A[S^{-1}]) \ar[r] & H_\mathrm{dR}^{\bullet-r+n-1}(\mathbf{P}(\CMcal{E})\setminus X_S)}
\end{align*}
where we use $H_\mathrm{dR}^{r-n+1}(\mathbf{G}_\Sigma\times_\mathbb{C}\mathbf{G}_m)\cong\mathbb{C}$. In particular, this induces an isomorphism
\begin{align*}
\xymatrix{H_\mathrm{dR}^{r+k}(\Spec A[S^{-1}]) \ar[r]^-\sim & H_\mathrm{dR}^{n+k-1}(\mathbf{P}(\CMcal{E})\setminus X_S)}.
\end{align*}

Combining all the maps constructed so far, we get the following diagram:
\begin{align*}
\xymatrix{
\left(\Omega_{A/\mathbb{C}}^\bullet,D_S\right)_{0,(0)} \ar@{^(->}[d] \ar[r]^-{\rho_S} & \left(\Omega_{A[S^{-1}]/\mathbb{C}}^\bullet,d\right)_{0,(0)} \ar[d]^-{\theta_c\circ\theta_w} \\
\left(\Omega_{A/\mathbb{C}}^\bullet,D_S\right) & \left(\Omega_{A[S^{-1}]^\mathbf{G}/\mathbb{C}}^\bullet,d\right)[-r+n-1]
}
\end{align*}
where the inclusion induces a surjection on cohomology with $1$-dimensional kernel generated by the class of $dS$ (Lemma \ref{TdRconcent}). From Proposition \ref{TdRCayley-rho2}, taking $H^{r+k}$, we get the following isomorphism:
\begin{align*}
\xymatrix{\theta_c\circ\theta_w\circ\rho_S:H^{r+k}\left(\Omega_{A/\mathbb{C}}^\bullet,D_S\right) \ar[r]^-\sim & H_\mathrm{dR}^{n+k-1}(\mathbf{P}(\CMcal{E})\setminus X_S)}.
\end{align*}

Now let $\Sigma$ be simplicial (not necessarily smooth). In this case, the $\mathbf{G}_\Sigma$-action on $U_\Sigma$ may admit finite abelian stabilizer at each point so the quotient map $U_\Sigma\rightarrow\mathbf{P}_\Sigma$ may not be a $\mathbf{G}_\Sigma$-torsor. Consequently, the isomorphism (\ref{torsor-coh}) may not hold. Nevertheless, the maps $\rho$ and $\theta$ are explicitly defined if we replace the algebraic de Rham complexes by their reflexive hulls, which is canonically isomorphic to the  original de Rham complex by Corollary \ref{refldR}. Along this line, we only ensure the surjectivity of $\theta_c\circ\theta_w$ (Proposition \ref{tctw-surjective}). However, we may prove that in some cases we still get the desired isomorphism.
\begin{theorem}\label{TdRCayley-comparison} For $i\geq2$, the map $\rho_S$ induces a surjection
\begin{align*}
\xymatrix{\theta_c\circ\theta_w\circ\rho_S:H^i\left(\Omega_{A/\mathbb{C}}^\bullet,D_S\right) \ar[r] & H_{\widehat{\mathrm{dR}}}^{i-r+n-1}(\mathbf{P}(\CMcal{E})\setminus X_S)}.
\end{align*}
Moreover, the $(r+k)$-th degree
\begin{align*}
\xymatrix{\theta_c\circ\theta_w\circ\rho_S:H^{r+k}\left(\Omega_{A/\mathbb{C}}^\bullet,D_S\right) \ar[r] & H_{\widehat{\mathrm{dR}}}^{n+k-1}(\mathbf{P}(\CMcal{E})\setminus X_S)}
\end{align*}
becomes an isomorphism in each of the following cases.
\begin{quote}
(1) The quotient map $\Spec A[S^{-1}]\rightarrow\mathbf{P}(\CMcal{E})\setminus X_S$ induces a $\mathbf{G}$-equivariant isomorphism
\begin{align}\label{fcase}
H_\mathrm{dR}^\bullet(\Spec A[S^{-1}])\cong H_{\widehat{\mathrm{dR}}}^\bullet(\mathbf{P}(\CMcal{E})\setminus X_S)\otimes_\mathbb{C} H_\mathrm{dR}^\bullet(\mathbf{G}).
\end{align}
This is the case if $\Sigma$ is smooth or $\mathrm{rank}(\mathrm{Cl}(\mathbf{P}_\Sigma))=1$.\\
(2) $X_G\subseteq\mathbf{P}_\Sigma$ is a quasi-smooth complete intersection of ample hypersurfaces.
\end{quote}
\end{theorem}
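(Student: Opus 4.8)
The plan is to establish both assertions by feeding the simplicial (non-smooth) case through the very composite assembled immediately before the statement, namely
\[
\bigl(\Omega_{A/\mathbb{C}}^\bullet,D_S\bigr)_{0,(+)} \xrightarrow{\ \rho_S\ } \bigl(\Omega_{A[S^{-1}]/\mathbb{C}}^\bullet,d\bigr)_{0,(0)} \xrightarrow{\ \theta_c\circ\theta_w\ } \bigl(\widehat{\Omega}_{A[S^{-1}]^{\mathbf{G}}/\mathbb{C}}^\bullet,d\bigr)[n-r-1],
\]
with all algebraic de Rham complexes silently replaced by their reflexive hulls, which are canonically isomorphic to the originals by Corollary \ref{refldR}; this is what keeps $\rho_S$ and the contractions $\theta_c,\theta_w$ meaningful once $\mathbf{P}_\Sigma$ is only simplicial. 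For the surjection in degrees $i\geq 2$ I would chain three inputs that do not see smoothness. First, the concentration statement (Lemma \ref{TdRconcent}) identifies $H^i(\Omega_{A/\mathbb{C}}^\bullet,D_S)$ with the cohomology of its $\deg_c=0$ subcomplex, the one-dimensional low-degree defect carried by the class of $dS$ being exactly why the range starts at $i\geq 2$. Second, in that $\deg_c=0$ subcomplex every form of wedge degree $>0$ already has $\deg_w>0$ (Remark \ref{rho-restriction}), so $\rho_S$ applies and induces an isomorphism onto $H^i\bigl((\Omega_{A[S^{-1}]/\mathbb{C}}^\bullet,d)_{0,(0)}\bigr)$ by the cochain-level computation of Proposition \ref{TdRCayley-rho2}. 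Third, $\theta_c\circ\theta_w$ is surjective on cohomology by Proposition \ref{tctw-surjective}. Composing the isomorphism with the surjection yields the claimed surjection onto $H^{i-r+n-1}_{\widehat{\mathrm{dR}}}(\mathbf{P}(\cE)\setminus X_S)$.

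For the isomorphism in degree $r+k$ under hypothesis (1), I would simply reproduce the torsor computation already carried out in the smooth case. When \eqref{fcase} holds, $\theta_c\circ\theta_w$ is the projection onto the identity component of the K\"unneth splitting $H_{\mathrm{dR}}^\bullet(\Spec A[S^{-1}])\cong H_{\widehat{\mathrm{dR}}}^\bullet(\mathbf{P}(\cE)\setminus X_S)\otimes_{\mathbb{C}} H_{\mathrm{dR}}^\bullet(\mathbf{G})$, and since $H_{\mathrm{dR}}^{r-n+1}(\mathbf{G})\cong\mathbb{C}$ it restricts in top degree to the isomorphism $H_{\mathrm{dR}}^{r+k}(\Spec A[S^{-1}])\xrightarrow{\sim}H_{\widehat{\mathrm{dR}}}^{n+k-1}(\mathbf{P}(\cE)\setminus X_S)$. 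Precomposing with the degree-$(r+k)$ isomorphism induced by $\rho_S$ gives the result. The smooth subcase supplies \eqref{fcase} because $U_\Sigma\to\mathbf{P}_\Sigma$ is then a $\mathbf{G}_\Sigma$-torsor; the rank-one subcase does not need \eqref{fcase} at all, as Proposition \ref{tctw-rank1-isom} already writes $H^\bullet(\Omega_{A[S^{-1}]/\mathbb{C}}^\bullet,d)$ as an explicit sum of copies of $H_{\widehat{\mathrm{dR}}}^\bullet(\mathbf{P}(\cE)\setminus X_S)$ with $\theta_c$ the pertinent projection.

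Case (2) is the substantive one, since there \eqref{fcase} may fail and Proposition \ref{tctw-surjective} gives only surjectivity of $\theta_c\circ\theta_w$. My plan is to upgrade the degree-$(r+k)$ surjection of the first paragraph to an isomorphism by a dimension count, using ampleness and quasi-smoothness precisely to make both sides computable. On the target side these hypotheses place us in Theorem \ref{Mav}: the cup product $\cup[X_G]$ is an isomorphism, so $\bH\cong H^{n+k-1}(\mathbf{P}(\cE)\setminus X_S)$, and by the Batyrev--Cox--Mavlyutov description (\cite{BatCox},\cite{Mav}) this is the relevant graded piece of the Jacobian ring $A/J_S$ with $J_S=\bigl(\partial S/\partial q_1,\ldots,\partial S/\partial q_{r+k}\bigr)$; in particular it is finite dimensional. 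On the source side, the top cohomology $H^{r+k}(\Omega_{A/\mathbb{C}}^\bullet,D_S)=\coker\bigl(D_S\colon \Omega_{A/\mathbb{C}}^{r+k-1}\to\Omega_{A/\mathbb{C}}^{r+k}\bigr)$ is identified with the same Jacobian ring via the cochain-level statement of Proposition \ref{TdRCayley-rho}. Matching the two descriptions shows the finite-dimensional source and target have equal dimension, so the surjection is forced to be an isomorphism.

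The step I expect to be the main obstacle is exactly this last dimension match in Case (2). One must verify that the regular-sequence input guaranteeing finiteness and the Jacobian-ring presentation of $A/J_S$ in the relevant degree --- which is what quasi-smoothness of $X_G$ and ampleness of the $X_{G_i}$ provide --- is simultaneously the hypothesis underlying the Batyrev--Cox--Mavlyutov computation of $\bH$ and the top-degree computation of $(\Omega_{A/\mathbb{C}}^\bullet,D_S)$, so that the two appearances of $A/J_S$ are literally the same graded vector space and not merely abstractly isomorphic. Everything else is formal, being either smooth-case bookkeeping transported through reflexive hulls or the surjectivity already packaged in Propositions \ref{tctw-surjective} and \ref{TdRCayley-rho2}.
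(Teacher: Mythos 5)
Your treatment of the surjectivity statement and of case (1) follows the paper's own route: concentration in $\deg_c=0$ (Lemma \ref{TdRconcent}), the identification of $H^i$ of the twisted complex via $\rho_S$ (Proposition \ref{TdRCayley-rho2}), the surjectivity of $\theta_c\circ\theta_w$ (Proposition \ref{tctw-surjective}), and, for (1), the K\"unneth/torsor projection in the smooth case together with Proposition \ref{tctw-rank1-isom} in the rank-one case. One small imprecision: $\rho_S$ is an isomorphism onto the summand $\frac{dS}{S}\wedge H^{i-1}\bigl(\Omega_{A[S^{-1}]_{(0)}/\mathbb{C}}^\bullet,d\bigr)$ of $H^i\bigl((\Omega_{A[S^{-1}]/\mathbb{C}}^\bullet,d)_{0,(0)}\bigr)$, not onto the whole space; the composite is still surjective because the explicit section of $\theta_c\circ\theta_w$ built in Proposition \ref{tctw-surjective} lands in that summand, but you should say this.

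The genuine gap is in case (2), at exactly the step you flag as the main obstacle. You propose to match dimensions by identifying \emph{both} sides with the Jacobian ring $A_{c_B}/(\mathrm{Jac}(S)\cap A_{c_B})$, asserting that the two appearances are ``literally the same graded vector space.'' They are not. The target $H^{n+k-1}(\mathbf{P}(\CMcal{E})\setminus X_S)$ is indeed $A_{c_B}/Q_S(\mathcal{A}_{c_B}^{-1})$ by \cite[Theorem 10.6]{BatCox}, i.e.\ the quotient by the Jacobian ideal. But the source $H^{r+k}\bigl(\Omega_{A/\mathbb{C}}^\bullet,D_S\bigr)$ is the cokernel of $D_S=d+dS\wedge-$, which in the Batalin--Vilkovisky coordinates is $A_{c_B}/K_S(\mathcal{A}_{c_B}^{-1})$ with $K_S=\Delta+Q_S$: writing $\omega=\sum_jF_j\,dq_1\wedge\cdots\widehat{dq_j}\cdots\wedge dq_{r+k}$, the image of $D_S$ consists of the elements $\sum_j\pm(\partial_{q_j}F_j+F_j\,\partial_{q_j}S)$, not merely $\sum_j F_j\,\partial_{q_j}S$. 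Proposition \ref{TdRCayley-rho} does not identify this cokernel with the Jacobian ring; the equality of dimensions of $A_{c_B}/\mathrm{im}(K_S)$ and $A_{c_B}/\mathrm{im}(Q_S)$ is precisely the nontrivial content here (it is recorded only afterwards, as the Proposition in subsection \ref{sec3.2}, \emph{as a consequence} of the comparison you are trying to prove). The paper closes this gap with Proposition \ref{TdR-qsm}: using the Batyrev--Cox pole-order description of the Hodge filtration, the graded pieces $\mathrm{Gr}_F^m H^{n+k-1}(\mathbf{P}(\CMcal{E})\setminus X_S)$ are assembled into the quotient $\sum_m H^0(\Omega^{n+k-1}((m+1)X_S))\big/\sum_m dH^0(\Omega^{n+k-2}(mX_S))$, and the maps $\alpha_{r+k}$, $\alpha_{r+k-1}$ together with the kernel computation of Proposition \ref{TdRCayley-rho} match this quotient with $(\Omega_{A/\mathbb{C}}^{r+k})_0/D_S(\Omega_{A/\mathbb{C}}^{r+k-1})_0$ --- note the differential entering on the geometric side is $d$ on rational forms, corresponding to the full $D_S$, which is why the Jacobian ideal never appears. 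Your argument needs this proposition (or an equivalent substitute) to be a proof; as written, the dimension count rests on a false identification.
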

\begin{proof} The first assertion follows from Proposition \ref{tctw-surjective} and Proposition \ref{TdRCayley-rho2}.\\
\indent (1) This follows from Proposition \ref{tctw-rank1-isom} and Proposition \ref{TdRCayley-rho2}.\\
\indent (2) By Corollary \ref{Dwork-finite}, Proposition \ref{TdR-qsm}, and Proposition \ref{normal-dR-compare}, the $(r+k)$-th degree of $\theta_c\circ\theta_w\circ\rho_S$ becomes a $\mathbb{C}$-linear surjection between $\mathbb{C}$-vector spaces of same finite dimension. Hence it necessarily becomes a $\mathbb{C}$-linear isomorphism.
\end{proof}
\begin{remark}
    In Theorem \ref{TdRCayley-comparison}, we remark that (1) and (2) cover different cases. For (1), we do not impose any condition on $X_G$, while we require that the fan $\Sigma$ defining $\mathbf{P}_\Sigma$ is nice enough so that \eqref{fcase} holds. On the other hand, for (2), we require some regularity condition on $X_G$, while we do not impose any further condition on $\Sigma$.
\end{remark}

\subsection{The proof in the first case}\label{sec2.5}

We study the behavior of the map $\rho_S$ on cohomology (Proposition \ref{TdRCayley-rho2}) to complete the proof of the case (1) of Theorem \ref{TdRCayley-comparison}.

\begin{lemma}\label{TdRconcent} The inclusion
\begin{align*}
\xymatrix{\left(\Omega_{A/\mathbb{C}}^\bullet,D_S\right)_0 \ar@{^(->}[r] & \left(\Omega_{A/\mathbb{C}}^\bullet,D_S\right)}
\end{align*}
is a quasi-isomorphism. On the other hand, the inclusion
\begin{align*}
\xymatrix{\left(\Omega_{A/\mathbb{C}}^\bullet,D_S\right)_{0,(+)} \ar@{^(->}[r] & \left(\Omega_{A/\mathbb{C}}^\bullet,D_S\right)_0}
\end{align*}
induces a surjection on cohomology spaces with one-dimensional kernel spanned by the class of $dS$.
\end{lemma}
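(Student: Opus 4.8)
The plan is to separate the two gradings on $\Omega_{A/\mathbb{C}}^\bullet$ and exploit that $D_S=d+dS\wedge-$ preserves $\deg_c$ (because $\deg_c S=0$) while it only raises $\deg_w$ (because $\deg_w S=1$): concretely, $d$ is homogeneous of bidegree $(0,0)$ and $dS\wedge-$ has bidegree $(0,1)$. These two features drive the two halves of the statement.

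For the first inclusion, since $D_S$ preserves $\deg_c$ the complex splits as $(\Omega_{A/\mathbb{C}}^\bullet,D_S)=\bigoplus_{\lambda\in\mathrm{Cl}(\mathbf{P}_\Sigma)}(\Omega_{A/\mathbb{C}}^\bullet,D_S)_\lambda$, and it suffices to show each summand with $\lambda\neq0$ is acyclic. Here I would invoke Lemma \ref{Cayley-theta}(3) with $D_{1,S}=D_S$: since $\deg_{c,i}S=0$ the inhomogeneous term drops out and one gets $(D_S\theta_{c,i}+\theta_{c,i}D_S)\xi=(\deg_{c,i}\xi)\,\xi$ on homogeneous $\xi$, while $\theta_{c,i}$ itself preserves $\deg_c$ (it is contraction with a degree-zero vector field). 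As $\mathrm{Cl}(\mathbf{P}_\Sigma)$ is free, any $\lambda\neq0$ is detected by some dual basis functional, so on the whole summand $(\Omega_{A/\mathbb{C}}^\bullet,D_S)_\lambda$ there is an index $i$ with $c_i:=\deg_{c,i}\neq0$; then $c_i^{-1}\theta_{c,i}$ is a contracting homotopy and the summand is acyclic. This yields the quasi-isomorphism.

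For the second inclusion, $(\Omega_{A/\mathbb{C}}^\bullet,D_S)_{0,(+)}$ is a genuine subcomplex of $(\Omega_{A/\mathbb{C}}^\bullet,D_S)_0$ because $D_S$ does not lower $\deg_w$. The quotient complex is the $\deg_w=0$ part $(\Omega_{A/\mathbb{C}}^\bullet)_{0,(0)}$ with the induced differential, which is simply $d$ (the term $dS\wedge-$ lands in $\deg_w\geq1$ and dies in the quotient). By completeness of $\mathbf{P}_\Sigma$---equivalently the observation in the Remark above that $1$ is the only bidegree-$(0,0)$ element of $R_\Sigma$, together with the analogous vanishing $(\Omega_{A/\mathbb{C}}^p)_{0,(0)}=0$ for $p\geq1$ (a bidegree-$(0,0)$ monomial $p$-form would express a nonzero effective toric divisor as a principal one, impossible on the complete $\mathbf{P}_\Sigma$)---this quotient is just $\mathbb{C}$ placed in cohomological degree $0$. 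I would then run the long exact sequence of the resulting short exact sequence of complexes.

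Because the quotient has cohomology $\mathbb{C}$ in degree $0$ and $0$ elsewhere, the long exact sequence makes $\iota_*$ an isomorphism in degrees $\geq2$ and reduces everything else to the connecting map $\delta:\mathbb{C}=H^0(\mathrm{quotient})\to H^1\!\big((\Omega_{A/\mathbb{C}}^\bullet,D_S)_{0,(+)}\big)$. Lifting the generator $1$ and applying $D_S$ gives $\delta(1)=[D_S(1)]=[dS]$. The only real content is that $[dS]\neq0$, i.e. $\delta$ is injective, which I expect to be the main obstacle. To prove it I would argue by the top $\deg_w$-component: if $dS=D_S\eta$ with $\eta\in A_{0,(+)}$ nonzero and $\eta_M$ its highest $\deg_w=M\geq1$ part, then comparing $\deg_w=M+1\geq2$ components (where $dS$ contributes nothing) forces $\eta_M\,dS=0$, hence $\eta_M=0$ since $A$ is a domain and $dS\neq0$, a contradiction. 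Once $\delta$ is injective, the long exact sequence forces $q_*=0$ in degrees $0$ and $1$, so $\iota_*$ is surjective in every degree, is an isomorphism away from degree $1$, and has one-dimensional kernel $\mathbb{C}\cdot[dS]$ concentrated in degree $1$, exactly as asserted. The homotopy bookkeeping for the first inclusion and the identification of the quotient complex are routine given Lemma \ref{Cayley-theta} and the completeness of $\mathbf{P}_\Sigma$; the nonvanishing of $[dS]$ is the crux.
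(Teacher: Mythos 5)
Your proposal is correct and follows essentially the same route as the paper's proof: the first half uses the homotopy identity $(D_S\theta_{c,i}+\theta_{c,i}D_S)\xi=(\deg_{c,i}\xi)\xi$ from Lemma \ref{Cayley-theta}(3) to contract the nonzero $\deg_c$-summands, and the second half reduces to the observation that the bidegree-$(0,(0))$ part is $\mathbb{C}\cdot 1$ with $D_S(1)=dS$, together with the nonvanishing of $[dS]$ in the $(0,(+))$-subcomplex. The only cosmetic differences are that you package the second half as a short exact sequence of complexes and prove $[dS]\neq 0$ by comparing top $\deg_w$-components, whereas the paper argues directly that $df=(1-f)\,dS$ has no polynomial solution; both are valid and equivalent in substance.
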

\begin{proof}
Since $\deg_cdS=0$, the differential $D_S:=d+dS\wedge-$ is compatible with $\deg_c$ so the subcomplex is well-defined. Moreover, by Lemma \ref{Cayley-theta} (3), each $\xi\in\Omega_{A/\mathbb{C}}^\bullet$ homogeneous with respect to $\deg_c$ satisfies the relation
\begin{align*}
(D_S\theta_c+\theta_cD_S)\xi=(\deg_c\xi)\xi
\end{align*}
so if $D_S\xi=0$, then $\xi$ is in the image of $D_S$ unless $\deg_c\xi=0$. Hence the cohomology of $(\Omega_{A/\mathbb{C}}^\bullet,D_S)$ is concentrated in $\deg_c=0$ subcomplex.\\
\indent Since $\Sigma$ is a simplicial fan, $1\in R_\Sigma$ is the only $(\deg_c,\deg_w)=(0,0)$ element up to scalar multiplication by $\mathbb{C}$. Since
\begin{align*}
D_S(1)=dS,\quad D_S(dS)=0
\end{align*}
$1\in\Omega_{A/\mathbb{C}}^\bullet$ does not contribute to the cohomology and kills the class $[dS]$. On the other hand, if $\deg_wf>0$, then the equation
\begin{align*}
D_S(f)=dS\iff df=(1-f)dS
\end{align*}
has no polynomial solutions. Hence $dS$ defines a nontrivial class in the cohomology of the subcomplex with $(\deg_c=0,\deg_w>0)$.
\end{proof}

\begin{lemma}\label{rho-properties} We list the properties of $\rho_S$.
\begin{quote}
(1) $\rho_S$ is a $\mathbb{C}$-linear cochain map.\\
(2) $\rho_S$ commutes with $\theta_c$ and $\theta_w$. Here $\theta_w$ is regarded as a map of graded vector spaces.\\
(3) If $\xi_1$ and $\xi_2$ are $\deg_w$-homogeneous of positive degree, then
\begin{align*}
\rho_S(\xi_1\wedge\xi_2)=-\frac{(\deg_w\xi_1+\deg_w\xi_2-1)!}{(\deg_w\xi_1-1)!(\deg_w\xi_2-1)!}\rho_S(\xi_1)\wedge\rho_S(\xi_2).
\end{align*}
In particular, if $\xi$ is $\deg_w$-homogeneous of positive degree, then
\begin{align*}
\rho_S(S^i\xi)=(-1)^i\frac{(\deg_w\xi+i-1)!}{(\deg_w\xi-1)!}\rho_S(\xi)
\end{align*}
for every integer $i>0$.
\end{quote}
\end{lemma}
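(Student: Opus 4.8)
The plan is to reduce all three items to one observation: on a $\deg_w$-homogeneous element $\xi$ with $\deg_w\xi=m>0$ the map has the shape $\rho_S(\xi)=c_m\,S^{-m}\xi$, where the normalizing scalar $c_m$ (namely $c_m=(-1)^{m-1}(m-1)!$) obeys the single recursion $c_{m+1}=-m\,c_m$ and the multiplicativity identity $c_{m_1+m_2}=-\frac{(m_1+m_2-1)!}{(m_1-1)!(m_2-1)!}\,c_{m_1}c_{m_2}$. Once this is isolated, everything becomes bookkeeping of $\deg_w$, together with the elementary facts that the Euler-field contractions $\theta_c,\theta_w$ kill $0$-forms and commute with multiplication by the scalar $S^{-m}$, while $d$ satisfies a Leibniz rule producing a correction term $d(S^{-m})=-m\,S^{-m-1}\,dS$.

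For (1), $\mathbb{C}$-linearity is immediate because $\rho_S$ is defined piece by piece on $\deg$-homogeneous elements and extended linearly. To verify that it is a cochain map I take $\xi$ with $\deg_w\xi=m$ and split $D_S\xi=d\xi+dS\wedge\xi$ according to $\deg_w$: since $\deg dq_i=\deg q_i$ the differential $d$ preserves $\deg_w$, so $d\xi$ has $\deg_w=m$, whereas $\deg_wS=1$ forces $dS\wedge\xi$ to have $\deg_w=m+1$. Applying $\rho_S$ to each piece gives
\[
\rho_S(D_S\xi)=c_m\frac{d\xi}{S^m}+c_{m+1}\frac{dS\wedge\xi}{S^{m+1}},
\]
while differentiating directly gives
\[
d(\rho_S\xi)=c_m\,d\!\left(S^{-m}\xi\right)=c_m\frac{d\xi}{S^m}-m\,c_m\frac{dS\wedge\xi}{S^{m+1}}.
\]
These agree exactly because $c_{m+1}=-m\,c_m$. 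This is the one genuinely load-bearing computation, and it is precisely what forces the otherwise opaque form of the constant in Definition \ref{rho-defn}.

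For (2), I will use that each $\theta_{c,i}$ and $\theta_w$ (i) preserves the full multidegree, in particular $\deg_w$; (ii) annihilates every element of $A[S^{-1}]$ viewed as a $0$-form; and (iii) is an anti-derivation of the wedge product by Lemma \ref{Cayley-theta}(2). Property (i) holds because each contraction is assembled from operators $q_j\,\partial/\partial q_j$, which delete a $dq_j$ of $\deg_w=\deg_w q_j$ and reinsert the factor $q_j$ of the same $\deg_w$, so the net change in $\deg_w$ (and in $\deg_c$) is zero. Hence for $\xi$ with $\deg_w\xi=m$,
\[
\theta(\rho_S\xi)=\theta\!\left(c_m\,S^{-m}\xi\right)=c_m\,S^{-m}\,\theta(\xi)=\rho_S(\theta\xi),
\]
where (ii) and (iii) pull the scalar $c_m S^{-m}$ through $\theta$, and (i) guarantees that $\rho_S$ assigns to $\theta\xi$ the same coefficient $c_m$. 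Here $\theta_w$ is treated merely as a map of graded vector spaces, since it is not $D_S$-closed, so no compatibility with differentials is required.

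For (3), multiplicativity of $S^{-m}$ gives, for $\deg_w\xi_i=m_i>0$,
\[
\rho_S(\xi_1\wedge\xi_2)=c_{m_1+m_2}\frac{\xi_1\wedge\xi_2}{S^{m_1+m_2}}=\frac{c_{m_1+m_2}}{c_{m_1}c_{m_2}}\,\rho_S(\xi_1)\wedge\rho_S(\xi_2),
\]
and substituting $c_m=(-1)^{m-1}(m-1)!$ collapses the ratio to the stated constant $-\frac{(m_1+m_2-1)!}{(m_1-1)!(m_2-1)!}$. The special case follows by taking $\xi_1=S^i$, for which $\rho_S(S^i)=c_i=(-1)^{i-1}(i-1)!$, leaving $(-1)^i\frac{(\deg_w\xi+i-1)!}{(\deg_w\xi-1)!}$ after simplification. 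I expect no genuine obstacle in this lemma beyond careful sign and ordering bookkeeping; the only delicate point is the Leibniz correction in $d(S^{-m}\xi)$ in step (1), as that is exactly where the precise value of $c_m$ is pinned down.
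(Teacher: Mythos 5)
Your computation is correct and is exactly the ``direct computation'' that the paper's proof leaves to the reader: the recursion $c_{m+1}=-m\,c_m$ for $c_m=(-1)^{m-1}(m-1)!$ is indeed the one load-bearing identity behind the cochain property, and (2) and (3) reduce, as you say, to $\mathcal{O}$-linearity and degree-preservation of the contractions and to the ratio $c_{m_1+m_2}/(c_{m_1}c_{m_2})$. You have also, correctly, read the coefficient in Definition \ref{rho-defn} as $(-1)^{\deg_w\xi-1}(\deg_w\xi-1)!$ rather than the literal $(-1)^{(\deg_w\xi-1)!}$, which is the only reading consistent with the lemma and with the inverse map $\sigma$ appearing in Proposition \ref{TdRCayley-rho}.
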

\begin{proof}
The results follow from direct computation.
\end{proof}
\begin{definition} Define the map
\begin{align*}
\xymatrix{\epsilon_{w,S}:=D_S\theta_w+\theta_wD_S:\left(\Omega_{A/\mathbb{C}}^\bullet,D_S\right) \ar[r] & \left(\Omega_{A/\mathbb{C}}^\bullet,D_S\right) }
\end{align*}
By Lemma \ref{Cayley-theta}, if $\xi$ is homogeneous with respect to $\deg_w$, then
\begin{align*}
\epsilon_{w,S}(\xi)=(\deg_w\xi+S)\xi.
\end{align*}
\end{definition}
\begin{lemma}\label{epsilon-properties} We list the properties of $\epsilon_{w,S}$.
\begin{quote}
(1) $\epsilon_{w,S}$ is a $\mathbb{C}$-linear cochain map.\\
(2) $\epsilon_{w,S}$ is injective.\\
(3) $\epsilon_{w,S}$ restricts to the bidegree $(\deg_c=0,\deg_w>0)$-subcomplex:
\begin{align*}
\xymatrix{\epsilon_{w,S}:\left(\Omega_{A/\mathbb{C}}^\bullet,D_S\right)_{0,(+)} \ar[r] & \left(\Omega_{A/\mathbb{C}}^\bullet,D_S\right)_{0,(+)}}
\end{align*}
If $\xi\in\left(\Omega_{A/\mathbb{C}}^\bullet\right)_{0,(+)}$ is a $\deg_w$-homogeneous element, then
\begin{align*}
S^i\xi\equiv(-1)^i\frac{(\deg_w\xi+i-1)!}{(\deg_w\xi-1)!}\xi\mod\epsilon_{w,S}\left(\Omega_{A/\mathbb{C}}^\bullet\right)_{0,(+)}.
\end{align*}
\end{quote}
\end{lemma}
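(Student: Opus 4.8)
The plan is to reduce all three assertions to the single identity recorded just above the statement: for a $\deg_w$-homogeneous $\xi$ one has $\epsilon_{w,S}(\xi) = (\deg_w\xi + S)\xi$, which is Lemma \ref{Cayley-theta} (3) with $\lambda = 1$ and $f = S$ (recall $\deg_w S = 1$, $\deg_c S = 0$). Here $S$ acts by exterior multiplication, raising $\deg_w$ by one, while the scalar part $\deg_w\xi$ preserves $\deg_w$. For (1) I would argue abstractly from $D_S^2 = 0$: the two composites agree,
\[
D_S\,\epsilon_{w,S} = D_S(D_S\theta_w + \theta_w D_S) = D_S\theta_w D_S = (D_S\theta_w + \theta_w D_S)D_S = \epsilon_{w,S}\,D_S,
\]
so $\epsilon_{w,S}$ commutes with $D_S$ and is a cochain map; $\mathbb{C}$-linearity is inherited from $D_S$ and $\theta_w$. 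This is the usual fact that the (anti)commutator of a differential with a contraction is a chain map.

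For (2) I would use the weight decomposition. Write $\omega = \sum_{m\geq 0}\omega_m$ with $\omega_m$ the $\deg_w = m$ part; since $A$ is a polynomial ring, $\deg_w \geq 0$ and each element has finite weight support, so the sum is finite. Because $S$ raises $\deg_w$ by one, the master formula shows that the $\deg_w = m$ component of $\epsilon_{w,S}(\omega)$ equals $m\,\omega_m + S\,\omega_{m-1}$ (with $\omega_{-1} = 0$). If $\epsilon_{w,S}(\omega) = 0$, then $m\,\omega_m = -S\,\omega_{m-1}$ for all $m$. Letting $m_0$ be the least index with $\omega_{m_0}\neq 0$: if $m_0 > 0$ the equation at $m = m_0$ reads $m_0\,\omega_{m_0} = 0$, a contradiction; if $m_0 = 0$, the recursion $\omega_m = -\tfrac{1}{m}S\,\omega_{m-1}$ (valid in characteristic zero) gives $\omega_m = \tfrac{(-1)^m}{m!}S^m\omega_0 \neq 0$ for every $m$, since $S$ is a nonzerodivisor on the free $A$-module $\Omega_{A/\mathbb{C}}^\bullet$, contradicting finiteness of the weight support. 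Hence $\omega = 0$.

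For (3) the restriction to the $(\deg_c = 0, \deg_w > 0)$-subcomplex is pure degree bookkeeping: both summands $(\deg_w\xi)\xi$ and $S\xi$ of $\epsilon_{w,S}(\xi)$ keep $\deg_c = 0$ and $\deg_w > 0$ whenever $\xi$ does. For the congruence I would induct on $i$. Applying the master formula to $S^{i-1}\xi$, which has $\deg_w = \deg_w\xi + i - 1$, yields
\[
\epsilon_{w,S}(S^{i-1}\xi) = (\deg_w\xi + i - 1)\,S^{i-1}\xi + S^i\xi,
\]
so modulo $\epsilon_{w,S}\left(\Omega_{A/\mathbb{C}}^\bullet\right)_{0,(+)}$ we get $S^i\xi \equiv -(\deg_w\xi + i - 1)\,S^{i-1}\xi$; iterating and telescoping the factorials produces $S^i\xi \equiv (-1)^i\frac{(\deg_w\xi + i - 1)!}{(\deg_w\xi - 1)!}\xi$, as claimed (the base case $i=1$ reads $S\xi \equiv -(\deg_w\xi)\,\xi$).

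The only step that is not a one-line verification is the injectivity in (2): its crux is the interplay between weight-raising multiplication by $S$ and the boundedness of the $\deg_w$-grading on a polynomial ring, which is exactly what rules out a nontrivial kernel. Everything else follows mechanically from the identity $\epsilon_{w,S}(\xi) = (\deg_w\xi + S)\xi$.
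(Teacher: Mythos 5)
Your proposal is correct and follows the same route as the paper: part (1) is verbatim the paper's computation $D_S\epsilon_{w,S}=D_S\theta_wD_S=\epsilon_{w,S}D_S$, and part (3) is the same degree bookkeeping plus induction on $i$ via $\epsilon_{w,S}(S^{i-1}\xi)=(\deg_w\xi+i-1)S^{i-1}\xi+S^i\xi$. The one place you genuinely add something is (2): the paper only checks that $\epsilon_{w,S}(\xi)=(\deg_w\xi+S)\xi\neq0$ for $\xi$ a nonzero $\deg_w$-homogeneous element, which does not by itself settle injectivity on non-homogeneous elements since $\epsilon_{w,S}$ does not preserve the weight grading; your weight-filtration argument (finite weight support of polynomials, $S$ a nonzerodivisor on the free module $\Omega_{A/\mathbb{C}}^\bullet$, characteristic zero) closes that gap correctly. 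A marginally shorter variant: if $\omega_{m_1}$ is the top nonzero weight component of $\omega$, the weight-$(m_1+1)$ component of $\epsilon_{w,S}(\omega)$ is $S\omega_{m_1}\neq0$, so no case analysis on the lowest component is needed.
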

\begin{proof}
(1) $\epsilon_{w,S}$ is $\mathbb{C}$-linear by construction and is a cochain map because
\begin{align*}
D_S\epsilon_{w,S}=D_S\theta_wD_S=(\epsilon_{w,S}-\theta_wD_S)D_S=\epsilon_{w,S}D_S
\end{align*}
\indent (2) If $\xi$ is a nonzero $\deg_w$-homogeneous element, then
\begin{align*}
\epsilon_{w,S}(\xi)=(\deg_w\xi+S)\xi
\end{align*}
is nonzero by our choice of $S$.\\
\indent (3) Since $D_S$ and $\theta_w$ preserve $\deg_c$, the first assertion follows. For the rest part, we proceed by induction on the power $i$ of $S$. The case $i=1$ follows immediately from construction. If $i>1$, then
\begin{align*}
S^i\xi&=S\cdot S^{i-1}\xi\\
&\equiv-(\deg_w\xi+i-1)S^{i-1}\xi\mod\im\epsilon_{w,S}\\
&\equiv-(\deg_w\xi+i-1)\cdot(-1)^{i-1}\frac{(\deg_w\xi+i-2)!}{(\deg_w\xi-1)!}\xi\mod\im\epsilon_{w,S}\\
&\equiv(-1)^i\frac{(\deg_w+i-1)!}{(\deg_w\xi-1)!}\xi\mod\im\epsilon_{w,S}
\end{align*}
where the third line follows from the induction hypothesis.\\
\indent (4) follows from (3) because $D_S$ and $\theta_w$ preserve $\deg_c$.
\end{proof}
\begin{proposition}\label{TdRCayley-rho} There is an exact sequence of cochain complexes
\begin{align*}
\xymatrix{0 \ar[r] & \left(\Omega_{A/\mathbb{C}}^\bullet,D_S\right)_{0,(+)} \ar[r]^-{\epsilon_{w,S}} & \left(\Omega_{A/\mathbb{C}}^\bullet,D_S\right)_{0,(+)} \ar[r]^-{\rho_S} & \left(\Omega_{A[S^{-1}]/\mathbb{C}}^\bullet,d\right)_{0,(0)} \ar[r] & 0}.
\end{align*}
Consequently, there is an exact sequence
\begin{align*}
\xymatrix{0 \ar[r] & H^i\left(\left(\Omega_{A/\mathbb{C}}^\bullet,D_S\right)_{0,(+)}\right) \ar[r]^-{\rho_S} & H^i\left(\Omega_{A[S^{-1}]/\mathbb{C}}^\bullet,d\right) \ar[r]^-\delta & H^{i+1}\left(\left(\Omega_{A/\mathbb{C}}^\bullet,D_S\right)_{0,(+)}\right) \ar[r] & 0}
\end{align*}
for every $i\in\mathbb{Z}$, and in particular,
\begin{align*}
H^0\left(\left(\Omega_{A/\mathbb{C}}^\bullet,D_S\right)_{0,(+)}\right)=0,\quad H^1\left(\left(\Omega_{A/\mathbb{C}}^\bullet,D_S\right)_{0,(+)}\right)=\mathbb{C}\cdot[dS].
\end{align*}
\end{proposition}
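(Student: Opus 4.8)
The plan is to prove the short exact sequence of complexes first and then feed it into the associated long exact sequence in cohomology, arranging for the latter to degenerate into the asserted short exact sequences. Write $C^\bullet := \left(\Omega_{A/\mathbb{C}}^\bullet,D_S\right)_{0,(+)}$ and $D^\bullet := \left(\Omega_{A[S^{-1}]/\mathbb{C}}^\bullet,d\right)_{0,(0)}$, so the claimed sequence is $0 \to C^\bullet \xrightarrow{\epsilon_{w,S}} C^\bullet \xrightarrow{\rho_S} D^\bullet \to 0$. There are three points of exactness. Injectivity of $\epsilon_{w,S}$ is Lemma \ref{epsilon-properties}(2). The relation $\rho_S\circ\epsilon_{w,S}=0$ is immediate from Lemma \ref{rho-properties}: for a $\deg_w$-homogeneous $\xi$ of weight $m$ one has $\epsilon_{w,S}(\xi)=m\xi+S\xi$, while Lemma \ref{rho-properties}(3) with $i=1$ gives $\rho_S(S\xi)=-m\,\rho_S(\xi)$, so $\rho_S(\epsilon_{w,S}(\xi))=m\rho_S(\xi)-m\rho_S(\xi)=0$ by $\mathbb{C}$-linearity. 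Surjectivity of $\rho_S$ is elementary: any element of $D^\bullet$ can be written as $\omega/S^N$ with $\omega$ homogeneous of bidegree $(\deg_c,\deg_w)=(0,N)$, and after multiplying numerator and denominator by $S$ we may assume $N\geq 1$; then $\rho_S(\omega)$ is a nonzero scalar multiple of $\omega/S^N$, so the element lies in $\im\rho_S$.

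The substantive point, which I expect to be the main obstacle, is exactness in the middle, namely $\ker\rho_S\subseteq\im\epsilon_{w,S}$. Given $\zeta\in\ker\rho_S$, decompose it into $\deg_w$-homogeneous components $\zeta=\sum_m\zeta_m$ and let $b$ be its top weight. Writing $\rho_S(\zeta_m)=c_m\,\zeta_m/S^m$ with $c_m\in\mathbb{C}^\times$ (the scalars produced by the definition of $\rho_S$), the vanishing $\rho_S(\zeta)=0$, after multiplying by $S^b$ and invoking injectivity of $\Omega_{A/\mathbb{C}}^\bullet\hookrightarrow\Omega_{A[S^{-1}]/\mathbb{C}}^\bullet$ (valid since $A$ is a domain and $\Omega_{A/\mathbb{C}}^\bullet$ is $A$-free), becomes the polynomial identity $\sum_m c_m S^{b-m}\zeta_m=0$ in $\Omega_{A/\mathbb{C}}^\bullet$. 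Every summand with $m<b$ is divisible by $S$, hence so is the top term $c_b\zeta_b$; since $c_b\neq 0$ and the module is $S$-torsion-free, we get $\zeta_b=S\psi$ for a $\deg_w$-homogeneous $\psi$ of weight $b-1$ and $\deg_c\psi=0$. If $b\geq 2$ then $\psi\in C^\bullet$, and $\zeta-\epsilon_{w,S}(\psi)$ again lies in $\ker\rho_S$ but has strictly smaller top weight, because the weight-$b$ component of $\epsilon_{w,S}(\psi)=(b-1)\psi+S\psi$ is exactly $S\psi=\zeta_b$. If $b=1$ then $\zeta=\zeta_1$ and $\rho_S(\zeta_1)=c_1\zeta_1/S=0$ forces $\zeta_1=0$. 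A downward induction on $b$ therefore expresses $\zeta$ modulo $\im\epsilon_{w,S}$ as $0$, which is the desired inclusion.

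Finally I would pass to the long exact sequence of the short exact sequence of complexes. The observation that makes it degenerate is that $\epsilon_{w,S}=D_S\theta_w+\theta_wD_S$ is literally a chain homotopy: since $\theta_w$ preserves $\deg_c$ and $\deg_w$ while lowering form degree by one, it restricts to a degree $-1$ operator on $C^\bullet$, so $\epsilon_{w,S}$ is null-homotopic and induces the zero map on $H^\bullet(C)$. Consequently the long exact sequence breaks into the short exact sequences $0\to H^i(C)\xrightarrow{\rho_S}H^i(D)\xrightarrow{\delta}H^{i+1}(C)\to 0$ for every $i$. For the two low-degree computations: $H^0(C)=0$ follows by examining the lowest-weight component of a cocycle $f\in C^0$, which would have to satisfy $df_a=0$ and hence be a constant of positive weight, forcing $f_a=0$ and thus $f=0$. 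Then the sequence at $i=0$ yields $H^1(C)\cong H^0(D)$, and $H^0(D)=\mathbb{C}$ because the affine variety $\Spec A[S^{-1}]^{\mathbf{G}}$ is irreducible, hence connected, so its only regular functions with vanishing differential are the constants. Since $[dS]$ is already a nonzero class in $H^1(C)$ by Lemma \ref{TdRconcent}, a one-dimensional space containing it must equal $\mathbb{C}\cdot[dS]$, giving $H^1(C)=\mathbb{C}\cdot[dS]$.
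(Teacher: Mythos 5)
Your proposal is correct, and most of it (injectivity of $\epsilon_{w,S}$, the identity $\rho_S\circ\epsilon_{w,S}=0$ via Lemma \ref{rho-properties}(3) with $i=1$, surjectivity of $\rho_S$ by clearing denominators, the null-homotopy $\epsilon_{w,S}=D_S\theta_w+\theta_wD_S$ splitting the long exact sequence, and the identification $H^1\cong H^0(\Omega_{A[S^{-1}]/\mathbb{C}}^\bullet,d)=\mathbb{C}$) runs parallel to the paper. Where you genuinely diverge is the middle exactness $\ker\rho_S\subseteq\operatorname{im}\epsilon_{w,S}$: the paper instead constructs an explicit $\mathbb{C}$-linear map $\sigma$ from the target into the cokernel of $\epsilon_{w,S}$ on the monomial basis $\frac{x^uy^v}{S^{|v|}}dx_I\wedge\frac{dy_J}{S^{|J|}}$, checks well-definedness using the congruence $S^i\xi\equiv(-1)^i\frac{(\deg_w\xi+i-1)!}{(\deg_w\xi-1)!}\xi\bmod\operatorname{im}\epsilon_{w,S}$ of Lemma \ref{epsilon-properties}(3), and verifies that $\sigma$ and the induced map $\overline{\rho}_S$ are mutually inverse. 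Your descending induction on the top $\deg_w$-weight of an element of $\ker\rho_S$ --- peel off the top component $\zeta_b=S\psi$ and subtract $\epsilon_{w,S}(\psi)$, whose weight-$b$ part is exactly $S\psi$ --- reaches the same conclusion without ever writing down an inverse, and is arguably cleaner since it sidesteps the well-definedness check for $\sigma$ and the special treatment of $\sigma(1)=S$; the paper's route buys an explicit description of $\operatorname{coker}\epsilon_{w,S}$ that is reused implicitly in the normalization of $\rho_S$. Your direct verification of $H^0=0$ by looking at the lowest-weight component is also a small departure (the paper reads it off from the start of the long exact sequence), but both are valid. Two minor points to make explicit if you write this up: the scalars $c_m=(-1)^{(m-1)!}$ are $\pm1$, hence nonzero, which your descent uses; and the passage from $H^0\bigl((\Omega_{A[S^{-1}]/\mathbb{C}}^\bullet,d)_{0,(0)}\bigr)$ to $H^0(\Omega_{A[S^{-1}]/\mathbb{C}}^\bullet,d)$ in the stated sequence uses the quasi-isomorphism of Proposition \ref{dRconcent}(1).
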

\begin{proof}
For the first part, note that the target of $\rho_S$ admits a $\mathbb{C}$-basis of the form
\begin{align*}
\left\{\frac{x^uy^v}{S^{|v|}}dx_I\wedge\frac{dy_J}{S^{|J|}}\ \middle|\ \deg_c\left(x^uy^vdx_I\wedge dy_J\right)=0\right\}
\end{align*}
which is in the image of $\rho_S$, the surjectivity of $\rho_S$ follows. The injectivity of $\epsilon_{w,S}$ follows from Lemma \ref{epsilon-properties}. If $\xi\in\left(\Omega_{A/\mathbb{C}}^\bullet\right)_{0,(+)}$ is $\deg_w$-homogeneous, then
\begin{align*}
(\rho_S\circ\epsilon_{w,S})(\xi)=\rho_S\left((\deg_w\xi)\xi+S\xi\right)=0
\end{align*}
by Lemma \ref{rho-properties}. Hence $\rho_S$ induces
\begin{align*}
\xymatrix{\displaystyle\overline{\rho}_S:\left(\frac{\left(\Omega_{A/\mathbb{C}}^\bullet\right)_{0,(+)}}{\epsilon_{w,S}\left(\Omega_{A/\mathbb{C}}^\bullet\right)_{0,(+)}},D_S\right) \ar[r] & \left(\Omega_{A[S^{-1}]/\mathbb{C}}^\bullet,d\right)_{0,(0)}}.
\end{align*}
Define the map of graded $\mathbb{C}$-vector spaces
\begin{align*}
\xymatrix{\sigma:\left(\Omega_{A[S^{-1}]/\mathbb{C}}^\bullet\right)_{0,(0)} \ar[r] & \displaystyle\frac{\left(\Omega_{A/\mathbb{C}}^\bullet\right)_{0,(+)}}{\epsilon_{w,S}\left(\Omega_{A/\mathbb{C}}^\bullet\right)_{0,(+)}}}
\end{align*}
by the formula
\begin{align*}
\sigma\left(\frac{x^uy^v}{S^{|v|}}dx_I\wedge\frac{dy_J}{S^{|J|}}\right):=\frac{(-1)^{|v|+|J|-1}}{(|v|+|J|-1)!}x^uy^vdx_I\wedge dy_J
\end{align*}
for $|v|+|J|>0$ together with the $\mathbb{C}$-linearity. From
\begin{align*}
&\quad\sigma\left(\frac{S^e}{S^e}\frac{x^uy^v}{S^{|v|}}dx_I\wedge\frac{dy_J}{S^{|J|}}\right)\\
&=\frac{(-1)^{|v|+|J|+e-1}}{(|v|+|J|+e-1)!}S^ex^uy^vdx_I\wedge dy_J\\
&\equiv\frac{(-1)^{|v|+|J|-1}}{(|v|+|J|-1)!}x^uy^vdx_I\wedge dy_J\mod\epsilon_{w,S}\left(\Omega_{A/\mathbb{C}}^\bullet\right)_{0,(+)}\\
&=\sigma\left(\frac{x^uy^v}{S^{|v|}}dx_I\wedge\frac{dy_J}{S^{|J|}}\right)
\end{align*}
we see that $\sigma$ is well-defined. Note that this forces $\sigma(1)=S$. By construction, $\overline{\rho}_S\sigma$ is the identity. Since $\coker\epsilon_{w,S}$ is spanned over $\mathbb{C}$ by $S^ex^uy^vdx_I\wedge dy_J$ with $e\geq0$ and $|v|+|J|>0$, and
\begin{align*}
S^ex^uy^vdx_I\wedge dy_J\equiv(-1)^e\frac{(|v|+|J|+e-1)!}{(|v|+|J|-1)!}x^uy^vdx_I\wedge dy_J\bmod\epsilon_{w,S}\left(\Omega_{A/\mathbb{C}}^\bullet\right)_{0,(+)}
\end{align*}
is in the image of $\sigma$, we conclude that $\sigma$ is surjective. Hence $\overline{\rho}_S$ and $\sigma$ are mutually inverses. Therefore we achieve the desired exactness.

For the second part, take the cohomology long exact sequence. Since $\epsilon_{w,S}$ is homotopic to zero by definition of $\epsilon_{w,S}$ and Lemma \ref{epsilon-properties}, we get the desired exact sequences. In particular, the long exact sequence begins with
\begin{align*}
\xymatrix{
0 \ar[r] & H^0\left(\left(\Omega_{A/\mathbb{C}}^\bullet,D_S\right)_{0,(+)}\right) \ar[d]^-0 &  \\
& H^0\left(\left(\Omega_{A/\mathbb{C}}^\bullet,D_S\right)_{0,(+)}\right) \ar[r]^-{\rho_S} & H^0\left(\Omega_{A[S^{-1}]/\mathbb{C}}^\bullet,d\right) \ar[r]^-\delta &  H^1\left(\left(\Omega_{A/\mathbb{C}}^\bullet,D_S\right)_{0,(+)}\right) \ar[d]^-0 \\
& & & H^1\left(\left(\Omega_{A/\mathbb{C}}^\bullet,D_S\right)_{0,(+)}\right).
}
\end{align*}
Hence we get the desired vanishing and the $\delta$ becomes an isomorphism
\begin{align*}
\xymatrix{\delta:H^0\left(\Omega_{A[S^{-1}]/\mathbb{C}}^\bullet,d\right) \ar[r]^-\sim &  H^1\left(\left(\Omega_{A/\mathbb{C}}^\bullet,D_S\right)_{0,(+)}\right)}.
\end{align*}
Since $\Spec A[S^{-1}]=\Spec R_\Sigma[y,S^{-1}]$ is connected, the right hand side is $1$-dimensional with a basis $[dS]$ coming from \ref{TdRconcent}.
\end{proof}
\begin{corollary}\label{Dwork-finite} The cohomology groups of the twisted de Rham complex
\begin{align*}
H^i\left(\Omega_{A/\mathbb{C}}^\bullet,D_S\right)
\end{align*}
are finite-dimensional $\mathbb{C}$-vector spaces for every $i\in\mathbb{Z}$. In particular,
\begin{align*}
H^0\left(\Omega_{A/\mathbb{C}}^\bullet,D_S\right)=0,\quad H^1\left(\Omega_{A/\mathbb{C}}^\bullet,D_S\right)=0.
\end{align*}
\end{corollary}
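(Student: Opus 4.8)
The plan is to obtain this corollary as an essentially formal consequence of the exact sequence in Proposition \ref{TdRCayley-rho} together with the two statements of Lemma \ref{TdRconcent}, the only external input being the finiteness of algebraic de Rham cohomology of a smooth affine variety. First I would record the three structural facts already at hand: (i) the quasi-isomorphism $H^i(\Omega_{A/\mathbb{C}}^\bullet, D_S) \cong H^i\big((\Omega_{A/\mathbb{C}}^\bullet, D_S)_0\big)$ coming from the first half of Lemma \ref{TdRconcent}; (ii) the inclusion-induced surjection $H^i\big((\Omega_{A/\mathbb{C}}^\bullet, D_S)_{0,(+)}\big) \twoheadrightarrow H^i\big((\Omega_{A/\mathbb{C}}^\bullet, D_S)_0\big)$ whose kernel is $\mathbb{C}\cdot[dS]$, concentrated in cohomological degree $1$, from the second half of Lemma \ref{TdRconcent}; and (iii) the short exact sequence of Proposition \ref{TdRCayley-rho}, whose left map $\rho_S$ embeds $H^i\big((\Omega_{A/\mathbb{C}}^\bullet, D_S)_{0,(+)}\big)$ into $H^i(\Omega_{A[S^{-1}]/\mathbb{C}}^\bullet, d)$.

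For finite-dimensionality I would argue as follows. Since $A = \mathbb{C}[q_1, \ldots, q_{r+k}]$, the scheme $\Spec A[S^{-1}]$ is the complement of the hypersurface $\{S = 0\}$ inside $\mathbf{A}^{r+k}$, hence a smooth affine $\mathbb{C}$-variety; therefore its algebraic de Rham cohomology $H^i(\Omega_{A[S^{-1}]/\mathbb{C}}^\bullet, d) = H^i_{\mathrm{dR}}(\Spec A[S^{-1}])$ is a finite-dimensional $\mathbb{C}$-vector space by Grothendieck's finiteness theorem (this is the one place where characteristic $0$ enters). By the injectivity of $\rho_S$ in (iii), each $H^i\big((\Omega_{A/\mathbb{C}}^\bullet, D_S)_{0,(+)}\big)$ is then finite-dimensional, being a subspace of a finite-dimensional space. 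Combining (i) and (ii), the group $H^i(\Omega_{A/\mathbb{C}}^\bullet, D_S)$ is a quotient of $H^i\big((\Omega_{A/\mathbb{C}}^\bullet, D_S)_{0,(+)}\big)$, hence finite-dimensional for every $i$.

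For the vanishing statements I would invoke the explicit computation at the end of Proposition \ref{TdRCayley-rho}, namely $H^0\big((\Omega_{A/\mathbb{C}}^\bullet, D_S)_{0,(+)}\big) = 0$ and $H^1\big((\Omega_{A/\mathbb{C}}^\bullet, D_S)_{0,(+)}\big) = \mathbb{C}\cdot[dS]$. Applying the surjection (ii) in degree $0$ gives $H^0\big((\Omega_{A/\mathbb{C}}^\bullet, D_S)_0\big) = 0$, and in degree $1$ the surjection has kernel exactly $\mathbb{C}\cdot[dS]$, which is all of $H^1\big((\Omega_{A/\mathbb{C}}^\bullet, D_S)_{0,(+)}\big)$, so $H^1\big((\Omega_{A/\mathbb{C}}^\bullet, D_S)_0\big) = 0$. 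Transporting these through the quasi-isomorphism (i) yields $H^0(\Omega_{A/\mathbb{C}}^\bullet, D_S) = 0$ and $H^1(\Omega_{A/\mathbb{C}}^\bullet, D_S) = 0$.

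I do not expect a serious obstacle here: the genuine work has been front-loaded into Proposition \ref{TdRCayley-rho} and Lemma \ref{TdRconcent}, and the present statement is a short diagram chase once the finiteness of $H^\bullet_{\mathrm{dR}}(\Spec A[S^{-1}])$ is granted. The one point deserving care is the degree bookkeeping of the kernel $\mathbb{C}\cdot[dS]$ in Lemma \ref{TdRconcent}: it lives in cohomological degree $1$, so it affects only the $i=1$ comparison and leaves the isomorphism $H^i\big((\Omega_{A/\mathbb{C}}^\bullet, D_S)_{0,(+)}\big) \cong H^i\big((\Omega_{A/\mathbb{C}}^\bullet, D_S)_0\big)$ intact for $i \neq 1$.
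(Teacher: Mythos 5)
Your proposal is correct and follows essentially the same route as the paper: the paper's proof likewise reduces to the $(\deg_c=0,\deg_w>0)$ subcomplex via Lemma \ref{TdRconcent} and then combines the exact sequence of Proposition \ref{TdRCayley-rho} with the finiteness of algebraic de Rham cohomology of smooth affine $\mathbb{C}$-algebras (the paper cites Monsky's theorem for this, where you invoke Grothendieck's, but the input is the same). Your more explicit bookkeeping of the degree-$1$ kernel $\mathbb{C}\cdot[dS]$ is exactly the diagram chase the paper leaves implicit.
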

\begin{proof}
By Lemma \ref{TdRconcent}, we may compute the cohomology of the twisted de Rham complex by using $\left(\Omega_{A/\mathbb{C}}^\bullet,D_S\right)_{0,(+)}$. Hence the results follow from Proposition \ref{TdRCayley-rho} and the finiteness of the algebraic de Rham cohomology of smooth $\mathbb{C}$-algebras \cite[Theorem 3.1]{MFdR}.
\end{proof}

Now we will show that there is an isomorphism (Proposition \ref{TdRCayley-rho2}):
$$
H^{i+1}\left(\Omega_{A/\mathbb{C}}^\bullet,D_S\right) \cong H^i\left(\Omega_{A[S^{-1}]_{(0)}/\mathbb{C}}^\bullet,d\right) 
$$
for any $i \in \Z$.

\begin{definition} Define the $\mathbb{C}$-linear map
\begin{align*}
\xymatrix{\chi:\left(\Omega_{A/\mathbb{C}}^\bullet\right)_{0,(+)} \ar[r] & \left(\Omega_{A/\mathbb{C}}^\bullet\right)_{0,(+)}}
\end{align*}
as follows: If $\xi$ is a $\deg_w\xi$-homogeneous of positive degree, then denote
\begin{align*}
\chi_\xi:=\left\{\begin{array}{ll}
0 & \textrm{if}\quad\deg_w\xi=1\\
\displaystyle-\left(1+\frac{1}{2}+\cdots+\frac{1}{\deg_w\xi-1}\right) & \textrm{if}\quad\deg_w\xi>1
\end{array}\right.
\end{align*}
and define $\chi(\xi):=\chi_\xi\cdot\xi$ on $\deg_w\xi$-homogeneous elements.
\end{definition}
\begin{lemma}\label{chi-properties} As a cochain map, we have
\begin{align*}
\rho_S\circ(\chi D_S-D_S\chi)=\frac{dS}{S}\wedge\rho_S.
\end{align*}
\end{lemma}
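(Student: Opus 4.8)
The plan is to verify the asserted identity directly on $\deg_w$-homogeneous elements, since every map in sight ($\rho_S$, $\chi$, $D_S$, and $\tfrac{dS}{S}\wedge-$) is $\mathbb{C}$-linear and $\left(\Omega_{A/\mathbb{C}}^\bullet\right)_{0,(+)}$ splits as a direct sum of its $\deg_w$-homogeneous pieces. So I would fix homogeneous $\xi$ with $\deg_c\xi=0$ and $\deg_w\xi=m\geq1$, and write $\chi_m$ for the common value of $\chi_\eta$ on $\deg_w$-homogeneous $\eta$ of $\deg_w$-degree $m$ (it depends only on $m$). The first thing to record is how $D_S$ interacts with the grading: because $\deg dq_i=\deg q_i$ and $\deg_w S=1$, the exterior derivative $d$ preserves $\deg_w$ while $dS\wedge-$ raises it by $1$. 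Hence
\begin{align*}
D_S\xi=d\xi+dS\wedge\xi
\end{align*}
decomposes into a $\deg_w=m$ part $d\xi$ and a $\deg_w=m+1$ part $dS\wedge\xi$, both still of $\deg_c=0$.

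Next I would compute the commutator. Applying $\chi$ degreewise and using $D_S(\chi\xi)=\chi_m D_S\xi$ gives
\begin{align*}
(\chi D_S-D_S\chi)\xi&=\chi_m\,d\xi+\chi_{m+1}(dS\wedge\xi)-\chi_m(d\xi+dS\wedge\xi)\\
&=(\chi_{m+1}-\chi_m)(dS\wedge\xi),
\end{align*}
so the $d\xi$-term cancels and only the $dS\wedge\xi$-term survives. The harmonic scalars telescope: for every $m\geq1$ (with the convention $\chi_1=0$) one has $\chi_{m+1}-\chi_m=-\tfrac1m$. Now write $\rho_S(\eta)=c_{\deg_w\eta}\,S^{-\deg_w\eta}\eta$ for $\deg_w$-homogeneous $\eta$; Lemma \ref{rho-properties}(3) with $i=1$ yields the recursion $c_{m+1}=-m\,c_m$. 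Since $dS\wedge\xi$ is $\deg_w$-homogeneous of degree $m+1$ with $\deg_c=0$, I would then conclude
\begin{align*}
\rho_S\big((\chi D_S-D_S\chi)\xi\big)=-\tfrac1m\,c_{m+1}\,S^{-(m+1)}(dS\wedge\xi)=c_m\,S^{-(m+1)}(dS\wedge\xi)=\frac{dS}{S}\wedge\rho_S(\xi),
\end{align*}
which is exactly the claim.

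The computation is routine once the degree bookkeeping is in place; there is no genuine obstacle. The single point deserving care is that the harmonic-number normalization defining $\chi$ is precisely engineered so that the telescoped coefficient $\chi_{m+1}-\chi_m=-\tfrac1m$ cancels against the recursion $c_{m+1}=-m\,c_m$ governing $\rho_S$, leaving the clean factor $c_m$ that reproduces $\tfrac{dS}{S}\wedge\rho_S$. I would also note the boundary case $m=1$ (where $\chi_1=0$, $\chi_2=-1$) is already subsumed by the telescoping formula, so no separate argument is needed. Finally, the identity is to be read as an equality of the indicated $\mathbb{C}$-linear maps on $\left(\Omega_{A/\mathbb{C}}^\bullet,D_S\right)_{0,(+)}$, and its role is to track how $\rho_S$ intertwines the correction term $\chi D_S-D_S\chi$ with multiplication by $\tfrac{dS}{S}$ on the localized complex.
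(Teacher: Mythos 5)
Your proof is correct and follows essentially the same route as the paper's: compute $(\chi D_S-D_S\chi)\xi=(\chi_{m+1}-\chi_m)\,dS\wedge\xi=-\tfrac1m\,dS\wedge\xi$ by telescoping the harmonic numbers, then cancel this factor against the normalization of $\rho_S$ via Lemma \ref{rho-properties}(3). The only cosmetic difference is that you invoke the recursion $\rho_S(S\eta)=-(\deg_w\eta)\rho_S(\eta)$ to relate the coefficients $c_{m+1}$ and $c_m$, whereas the paper applies the wedge-product formula directly to $\rho_S(dS\wedge\xi)$; these are the same part of Lemma \ref{rho-properties} and yield identical computations.
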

\begin{proof}
If $\xi$ is a $\deg_w$-homogeneous element, then
\begin{align*}
(\chi D_S-D_S\chi)(\xi)&=\chi_\xi d\xi+\chi_{dS\wedge\xi} dS\wedge\xi-\chi_\xi d\xi-\chi_\xi dS\wedge\xi=-\frac{1}{\deg_w\xi}dS\wedge\xi
\end{align*}
because $\deg_w(dS\wedge\xi)=\deg_w\xi+1$. By Lemma \ref{rho-properties}, this gives
\begin{align*}
(\rho_S\circ(\chi D_S-D_S\chi))(\xi)&=-\frac{1}{\deg_w\xi}\rho_S(dS\wedge\xi)\\
&=\frac{1}{\deg_w\xi}\frac{(\deg_w\xi)!}{(\deg_w\xi-1)!}\rho_S(dS)\wedge\rho_S(\xi)\\
&=\frac{dS}{S}\wedge\rho_S(\xi)
\end{align*}
so the lemma follows.
\end{proof}

\begin{lemma}\label{rho-homotopy} With Notation \ref{notation-deg}, the square
\begin{align*}
\xymatrixcolsep{4pc}\xymatrix{
\left(\Omega_{A/\mathbb{C}}^\bullet,D_S\right)_{0,(+)} \ar@{=}[d] \ar[r]^-{\frac{dS}{S}\wedge\theta_w\rho_S} & \displaystyle\frac{dS}{S}\wedge\left(\Omega_{A[S^{-1}]_{(0)}/\mathbb{C}}^\bullet,d\right)_0 \ar@{^(->}[d] \\
\left(\Omega_{A/\mathbb{C}}^\bullet,D_S\right)_{0,(+)} \ar[r]_-{\rho_S} & \left(\Omega_{A[S^{-1}]/\mathbb{C}}^\bullet,d\right)_{0,(0)}
}
\end{align*}
commutes up to homotopy where we use the identification coming from the decomposition in Proposition \ref{Cayley-decomposition}.
\end{lemma}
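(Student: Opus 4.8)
The plan is to exhibit an explicit cochain homotopy and thereby reduce the statement to the null-homotopy of one of the two components of $\rho_S$ furnished by the decomposition of Proposition \ref{Cayley-decomposition}. First I would identify what the upper-right corner of the square extracts. Writing $\rho_S(\xi)=\alpha_\xi+\frac{dS}{S}\wedge\beta_\xi$ with $\alpha_\xi,\beta_\xi\in\ker\theta_w$ (the unique decomposition of Proposition \ref{Cayley-decomposition}), the identity $\theta_w(\frac{dS}{S}\wedge\beta_\xi)=\beta_\xi$ that comes from \eqref{thetawdecomp} together with $\beta_\xi\in\ker\theta_w$ shows $\theta_w\rho_S(\xi)=\beta_\xi$. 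Hence the composite $\frac{dS}{S}\wedge\theta_w\rho_S$, followed by the inclusion, is precisely the projection of $\rho_S$ onto the summand $\frac{dS}{S}\wedge\ker\theta_w$. Thus the two legs of the square differ by the map $P\colon\xi\mapsto\alpha_\xi$ projecting $\rho_S$ onto $\ker\theta_w$, and it suffices to produce a homotopy $K$ with $dK+KD_S=P$.

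The homotopy I would use is $K:=\theta_w\circ\rho_S\circ\chi$, with $\chi$ the operator of the preceding definition; since $\chi$ and $\rho_S$ preserve form degree while $\theta_w$ lowers it by one, $K$ has the correct degree $-1$. The starting point is Lemma \ref{chi-properties}, namely $\rho_S(\chi D_S-D_S\chi)=\frac{dS}{S}\wedge\rho_S$. Because $\rho_S$ is a cochain map (Lemma \ref{rho-properties}(1)) we have $\rho_S D_S=d\rho_S$, so writing $h:=\rho_S\chi$ this rearranges to
\begin{align*}
hD_S-dh=\frac{dS}{S}\wedge\rho_S .
\end{align*}
Applying $\theta_w$ to both sides, and using that $\theta_w(\tfrac{dS}{S}\wedge\rho_S(\xi))=\alpha_\xi=P(\xi)$ — which follows from $\theta_w(\tfrac{dS}{S})=\theta_w(dS)/S=1$, the derivation property of $\theta_w$ from Lemma \ref{Cayley-theta}(2), and $\frac{dS}{S}\wedge\frac{dS}{S}=0$ — converts the displayed relation into
\begin{align*}
\theta_w h\,D_S-\theta_w d\,h=P .
\end{align*}

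The final step is to trade $\theta_w d$ for $-d\theta_w$. Everything appearing here lies in $\deg_w=0$ (the target of $\rho_S$ is the $\deg_w=0$ component and $d$ preserves $\deg_w$ since $\deg\,dq_i=\deg q_i$), so Lemma \ref{Cayley-theta}(3) with $\lambda=1$, $f=0$ gives $(d\theta_w+\theta_w d)\zeta=(\deg_w\zeta)\zeta=0$ on such $\zeta$, i.e. $\theta_w d\,h=-d\theta_w h$. Substituting yields $\theta_w h\,D_S+d\theta_w h=P$, that is $KD_S+dK=P$ with $K=\theta_w\rho_S\chi$, which is exactly the null-homotopy witnessing that the square commutes up to homotopy. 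The one point that demands care — and the main obstacle — is the degree bookkeeping around $\theta_w$: one must verify that $\frac{dS}{S}$ has $\deg_w=0$ so that $\frac{dS}{S}\wedge\rho_S$ stays in the $\deg_w=0$ target where the identity $\theta_w d=-d\theta_w$ is available, and that after applying $\theta_w$ the cross term cancels (using $\beta_\xi\in\ker\theta_w$) so that one reads off $\alpha_\xi$ exactly. Once this bookkeeping is in place the homotopy formula is a direct assembly of Lemmas \ref{chi-properties}, \ref{rho-properties}, and \ref{Cayley-theta}.
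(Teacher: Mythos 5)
Your proposal is correct and follows essentially the same route as the paper: the homotopy you construct, $K=\theta_w\rho_S\chi$, coincides with the paper's $\rho_S\theta_w\chi$ (by the commutation of $\rho_S$ with $\theta_w$ from Lemma \ref{rho-properties}(2)), and both arguments rest on Lemma \ref{chi-properties} together with the identity \eqref{thetawdecomp}. The only cosmetic difference is that the paper absorbs the cross term via $\rho_S\circ\epsilon_{w,S}=0$ on the source, whereas you use the Cartan relation $(d\theta_w+\theta_wd)=\deg_w=0$ on the $\deg_w=0$ target — two equivalent bookkeeping devices.
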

\begin{proof}
Using Lemma \ref{rho-properties}, Lemma \ref{epsilon-properties}, Proposition \ref{TdRCayley-rho}, and Lemma \ref{chi-properties}, we get
\begin{align*}
d\rho_S\theta_w\chi+\rho_S\theta_w\chi D_S&=\rho_SD_S\theta_w\chi+\rho_S\theta_w\chi D_S\\
&=\rho_S(D_S\theta_w+\theta_wD_S)\chi+\rho_S\theta(\chi D_S-D_S\chi)\\
&=\rho_S\epsilon_{w,S}\chi+\theta_w\rho_S(\chi D_S-D_S\chi)\\
&=\theta_w\left(\frac{dS}{S}\wedge\rho_S\right)\\
&=\rho_S-\frac{dS}{S}\wedge\theta_w\rho_S
\end{align*}
where $\theta_w\rho_S$ maps into $\Omega_{A[S^{-1}]_{(0)}/\mathbb{C}}^\bullet$ by Lemma \ref{Cayley-decomposition}.
\end{proof}
\begin{proposition}\label{TdRCayley-rho2} With the exact sequence as in Proposition \ref{TdRCayley-rho} for $i\in\mathbb{Z}$:
\begin{align*}
\xymatrix{0 \ar[r] & H^i\left(\left(\Omega_{A/\mathbb{C}}^\bullet,D_S\right)_{0,(+)}\right) \ar[r]^-{\rho_S} & H^i\left(\Omega_{A[S^{-1}]/\mathbb{C}}^\bullet,d\right) \ar[r]^-\delta & H^{i+1}\left(\left(\Omega_{A/\mathbb{C}}^\bullet,D_S\right)_{0,(+)}\right) \ar[r] & 0}
\end{align*}
$\rho_S$ and $\delta$ above induce isomorphisms
\begin{align*}
\xymatrix{\delta:H^i\left(\Omega_{A[S^{-1}]_{(0)}/\mathbb{C}}^\bullet,d\right) \ar[r] & H^{i+1}\left(\left(\Omega_{A/\mathbb{C}}^\bullet,D_S\right)_{0,(+)}\right)}
\end{align*}
\begin{align*}
\xymatrix{\rho_S:H^i\left(\left(\Omega_{A/\mathbb{C}}^\bullet,D_S\right)_{0,(+)}\right) \ar[r] & \displaystyle\frac{dS}{S}\wedge H^{i-1}\left(\Omega_{A[S^{-1}]_{(0)}/\mathbb{C}}^\bullet,d\right)}
\end{align*}
where we use the identification of Proposition \ref{Cayley-decomposition}.
\end{proposition}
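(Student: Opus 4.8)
The plan is to deduce both isomorphisms purely formally from the exact sequence of Proposition \ref{TdRCayley-rho}, the direct sum decomposition of Proposition \ref{Cayley-decomposition}, the homotopy of Lemma \ref{rho-homotopy}, and the finiteness from Corollary \ref{Dwork-finite}, by means of a dimension count. Write $C^\bullet := \left(\Omega_{A/\mathbb{C}}^\bullet, D_S\right)_{0,(+)}$ and abbreviate $V_i := H^i\left(\Omega_{A[S^{-1}]_{(0)}/\mathbb{C}}^\bullet, d\right)$ and $W_i := \frac{dS}{S}\wedge V_{i-1}$, so that restricting the decomposition of Proposition \ref{Cayley-decomposition} to $\deg_c = 0$ gives $H^i\left(\Omega_{A[S^{-1}]/\mathbb{C}}^\bullet, d\right) = V_i\oplus W_i$, where $\dim_\mathbb{C} W_i = \dim_\mathbb{C} V_{i-1}$ because $\frac{dS}{S}\wedge-$ is an isomorphism of complexes onto the second summand.

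First I would record the two structural inputs. Lemma \ref{rho-homotopy} exhibits $\rho_S$ as homotopic to $\frac{dS}{S}\wedge\theta_w\rho_S$, so on cohomology the image of $\rho_S\colon H^i(C^\bullet)\to V_i\oplus W_i$ lands in the summand $W_i$; combined with the injectivity of $\rho_S$ from Proposition \ref{TdRCayley-rho} this yields the inequality $\dim_\mathbb{C} H^i(C^\bullet)\le\dim_\mathbb{C} W_i = \dim_\mathbb{C} V_{i-1}$. On the other hand, the short exact sequence of Proposition \ref{TdRCayley-rho} gives the Euler relation $\dim_\mathbb{C} H^i(C^\bullet) + \dim_\mathbb{C} H^{i+1}(C^\bullet) = \dim_\mathbb{C} V_i + \dim_\mathbb{C} V_{i-1}$ for every $i$, all terms being finite by Corollary \ref{Dwork-finite} and the finiteness of algebraic de Rham cohomology of smooth affine $\mathbb{C}$-schemes.

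Setting $t_i := \dim_\mathbb{C} H^i(C^\bullet) - \dim_\mathbb{C} V_{i-1}$, the first input reads $t_i\le 0$, while the Euler relation rearranges to $t_i = \dim_\mathbb{C} V_i - \dim_\mathbb{C} H^{i+1}(C^\bullet)$, whence $t_{i+1} = -t_i$. Since $t_i\le 0$ and $t_{i+1} = -t_i\le 0$ together force $t_i\ge 0$, I conclude $t_i = 0$ for every $i$; that is, $\dim_\mathbb{C} H^i(C^\bullet) = \dim_\mathbb{C} V_{i-1}$ and $\dim_\mathbb{C} V_i = \dim_\mathbb{C} H^{i+1}(C^\bullet)$. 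Feeding this back, $\rho_S\colon H^i(C^\bullet)\to W_i$ is an injection between spaces of equal finite dimension, hence the first claimed isomorphism. For $\delta$, note that $\ker\delta = \mathrm{im}\,\rho_S\subseteq W_i$ meets $V_i$ trivially, so $\delta|_{V_i}\colon V_i\to H^{i+1}(C^\bullet)$ is injective, and the dimension equality $\dim_\mathbb{C} V_i = \dim_\mathbb{C} H^{i+1}(C^\bullet)$ upgrades it to the second claimed isomorphism.

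The only non-formal ingredient is already supplied by Lemma \ref{rho-homotopy}; the genuine subtlety of the statement, and where a direct approach would stall, is surjectivity of $\rho_S$ onto $W_i$ (equivalently, $\delta|_{W_i} = 0$). Producing explicit $D_S$-closed preimages is awkward precisely because $\theta_w$ fails to be a cochain map on $\left(\Omega_{A/\mathbb{C}}^\bullet, D_S\right)_{0,(+)}$, so the dimension count is what circumvents this. Because the count rests essentially on the antisymmetry $t_{i+1} = -t_i$ holding in every degree, I would take care to set up the finiteness and the decomposition in the $\deg_c = 0$ graded setting before running it; in particular the harmless concentration identification $H^i\left(\Omega_{A[S^{-1}]_{(0)}/\mathbb{C}}^\bullet\right)\cong H^i\left(\left(\Omega_{A[S^{-1}]_{(0)}/\mathbb{C}}^\bullet\right)_0\right)$, of the type established in Proposition \ref{dRconcent}, should be invoked so that $V_i$ agrees with the summand appearing in the decomposition.
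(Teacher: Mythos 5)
Your proof is correct, but it takes a genuinely different route from the paper's. The paper argues constructively: for a $D_S$-closed $\xi$ it sets $\omega:=\theta_w\rho_S\xi=\rho_S\theta_w\xi$, notes that $\theta_w\xi$ lifts $\omega$ along $\rho_S$ with $D_S\theta_w\xi=\epsilon_{w,S}(\xi)$, and reads off $\delta[\omega]=[\xi]$ directly from the construction of the connecting map; this gives surjectivity of $\delta$ restricted to $H^i\bigl(\Omega_{A[S^{-1}]_{(0)}/\mathbb{C}}^\bullet,d\bigr)$ first, from which surjectivity of $\rho_S$ onto the $\frac{dS}{S}\wedge(\cdot)$ summand is then extracted by the same small chase with Lemma \ref{rho-homotopy} that you use. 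You instead run a purely numerical Euler-characteristic argument: the containment $\operatorname{im}\rho_S\subseteq W_i$ plus injectivity gives $t_i\le 0$, the short exact sequence gives $t_{i+1}=-t_i$, and finiteness (Corollary \ref{Dwork-finite} together with Monsky's theorem for the smooth affine schemes involved) forces $t_i=0$ in every degree. Your setup is sound --- the identification $\dim W_i=\dim V_{i-1}$ is legitimate because $\frac{dS}{S}\wedge-$ is a bijection onto the second summand that anticommutes with $d$, and the $\deg_c$-concentration you flag is exactly the quasi-isomorphism of Proposition \ref{dRconcent} type. What the paper's route buys is an explicit description of $\delta^{-1}$ and of preimages under $\rho_S$ (the class $[\xi]$ is hit by $[\theta_w\rho_S\xi]$), which is in the spirit of the algorithmic use made of these complexes later; what yours buys is brevity and the avoidance of unwinding the connecting homomorphism, at the cost of being non-constructive and leaning essentially on finite-dimensionality in all degrees. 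One small remark: your closing comment that producing explicit preimages ``would stall'' is overly pessimistic --- the identity $D_S\theta_w+\theta_wD_S=\epsilon_{w,S}$ is precisely what makes the paper's explicit computation of $\delta$ go through even though $\theta_w$ is not a cochain map on the twisted complex.
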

\begin{proof}
Suppose that $\xi\in\Omega_{A}^{i+1}$ is a $D_S$-closed form. If we take
\begin{align*}
\omega:=\theta_w\rho_S\xi=\rho_S\theta_w\xi,
\end{align*}
then $\omega\in\Omega_{A[S^{-1}]_{(0)}/\mathbb{C}}^i$ and
\begin{align*}
d\omega=d\theta_w\rho_S\xi=\theta_w\rho_SD_S\xi=0.
\end{align*}
Since $\omega$ is in the image of $\rho_S$, it represents a class in $H^i(\Omega_{A[S^{-1}]_{(0)}/\mathbb{C}}^\bullet,d)$. Now $\theta_w\xi$ is a lift of $\omega$ along $\rho_S$ and, since $D_S\xi=0$, we have
\begin{align*}
D_S\theta_w\xi=(D_S\theta_w+\theta_wD_S)\xi=\epsilon_{w,S}(\xi).
\end{align*}
Therefore, by the construction of connecting map $\delta$,
\begin{align*}
\delta[\omega]=\left[\epsilon_{w,S}^{-1}D_S\theta_w\xi\right]=[\xi]
\end{align*}
so $\delta$ restricted to $H^i(\Omega_{A[S^{-1}]_{(0)}/\mathbb{C}}^\bullet,d)$ is surjective.

On the other hand, $\rho_S$ defines an injection into $H^{i-1}(\Omega_{A[S^{-1}]_{(0)}/\mathbb{C}}^\bullet,d)$ by Proposition \ref{TdRCayley-rho} and Lemma \ref{rho-homotopy}. If $\xi\in H^{i-1}(\Omega_{A[S^{-1}]_{(0)}/\mathbb{C}}^\bullet,d)$, then there is $\widetilde{\xi}\in H^i(\Omega_{A[S^{-1}]_{(0)}/\mathbb{C}}^\bullet,d)$ with
\begin{align*}
\delta\widetilde{\xi}=\delta\left(\frac{dS}{S}\wedge\xi\right)
\end{align*}
by the surjectivity of $\delta$ observed above. Hence
\begin{align*}
\widetilde{\xi}-\frac{dS}{S}\wedge\xi\in\rho_SH^i\left(\left(\Omega_{A/\mathbb{C}}^\bullet,D_S\right)_{0,(+)}\right)
\end{align*}
but this implies $\widetilde{\xi}=0$ by Lemma \ref{rho-homotopy}. Hence $\rho_S$ is surjective as well, i.e. it is an isomorphism. By the identification of Proposition \ref{Cayley-decomposition}, this implies that $\delta$ is an isomorphism as well. The last assertion follows from Proposition \ref{Cayley-decomposition} together with Lemma \ref{TdRconcent}.
\end{proof}

\subsection{The proof in the second case}\label{sec2.6}

We need the following proposition to complete the proof of the case (2) of Theorem \ref{TdRCayley-comparison}.

\begin{proposition}\label{TdR-qsm} 
Let $X_G\subseteq\mathbf{P}_\Sigma$ be a quasi-smooth complete intersection of ample hypersurfaces. Then there is a $\mathbb{C}$-linear isomorphism
\begin{align*}
\frac{ \left(\Omega_{A/\mathbb{C}}^{r+k}\right)_0}{(d+dS\wedge-) \left(\Omega_{A/\mathbb{C}}^{r+k-1}\right)_0} \cong  H^{n+k-1} (\mathbf{P}(\CMcal{E})\setminus X_S).
\end{align*}
\end{proposition}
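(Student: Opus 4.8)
The plan is to prove the statement by identifying each side with the same collection of graded pieces of the Jacobian ring $A/J_S$, where $J_S=(\partial S/\partial q_1,\dots,\partial S/\partial q_{r+k})$ is the Jacobian ideal of $S$. First I would record that, since there are only $r+k$ variables, $\Omega_{A/\mathbb{C}}^{r+k}$ is the top exterior power and both $d$ and $dS\wedge-$ send it into $\Omega_{A/\mathbb{C}}^{r+k+1}=0$; hence every element of $\Omega_{A/\mathbb{C}}^{r+k}$ is $D_S$-closed and the left-hand side is literally the top cohomology of the $\deg_c=0$ subcomplex, which by Lemma \ref{TdRconcent} agrees with $H^{r+k}(\Omega_{A/\mathbb{C}}^\bullet,D_S)$. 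Writing $\mu=dq_1\wedge\cdots\wedge dq_{r+k}$ and $\mu_i$ for $\mu$ with $dq_i$ deleted, the quotient $\Omega_{A/\mathbb{C}}^{r+k}/D_S\Omega_{A/\mathbb{C}}^{r+k-1}$ is $A\mu$ modulo the image of the $\mathbb{C}$-linear ``twisted divergence'' $\sum_i a_i\mu_i\mapsto\sum_i\pm(\partial a_i/\partial q_i+a_i\,\partial S/\partial q_i)\,\mu$; intersecting with $\deg_c=0$ gives the object on the left.

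For the right-hand side I would invoke the residue/pole-order description of Batyrev--Cox \cite{BatCox} and Mavlyutov \cite{Mav}. Quasi-smoothness of $X_G$ together with ampleness of the $X_{G_i}$ makes $S$ nondegenerate in their sense, so the Jacobian ring is finite-dimensional in the relevant multidegrees and the toric residue map identifies $H^{n+k-1}(\mathbf{P}(\CMcal{E})\setminus X_S)$ with the direct sum, over pole orders, of the graded pieces of $A/J_S$ in the degrees cut out by the toric volume form of $\mathbf{P}(\CMcal{E})$ (i.e.\ $\ell\cdot\deg S$ shifted by the anticanonical class). This is exactly the Jacobian-ideal description of $\bH\cong H^{n+k-1}(\mathbf{P}(\CMcal{E})\setminus X_S)$ referred to in the introduction, and it supplies the dimension of the target.

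Next I would run the Griffiths--Dwork/Cayley reduction for the left-hand side, in parallel with the smooth projective case treated in \cite{Park23}. Filtering the quotient by $\deg_w$ (equivalently by pole order after applying $\rho_S$ of Definition \ref{rho-defn}; compare Proposition \ref{TdRCayley-rho}), the top-order part of the twisted divergence is the multiplication operator $a\mapsto\sum_i a_i\,\partial S/\partial q_i$, whose image is precisely $J_S$, while the subleading term $\sum_i\partial a_i/\partial q_i$ coming from $d$ strictly lowers the filtration degree. Consequently the associated graded of $H^{r+k}(\Omega_{A/\mathbb{C}}^\bullet,D_S)$ is the corresponding associated graded of $A/J_S$, and the two spaces have the same dimension, which is finite by Corollary \ref{Dwork-finite} and matches the count from the previous paragraph.

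Finally, I would assemble the isomorphism. The natural comparison map is $\theta_c\circ\theta_w\circ\rho_S$, which the first assertion of Theorem \ref{TdRCayley-comparison} --- through Proposition \ref{tctw-surjective} and Proposition \ref{TdRCayley-rho2} --- already exhibits as a surjection of the left-hand side onto $H^{n+k-1}(\mathbf{P}(\CMcal{E})\setminus X_S)$; a surjection between $\mathbb{C}$-vector spaces of equal finite dimension is an isomorphism. I expect the genuine obstacle to be the reduction of the previous paragraph in the general higher-rank grading ($m\geq 3$): the clean total-cohomology computation available for $m=2$ in \cite{AS} does not apply, so one must argue directly that the lower-order $d$-term does not perturb the dimension, and it is precisely the nondegeneracy forced by quasi-smoothness and ampleness that guarantees the pole-order filtration degenerates as in the Jacobian-ring computation.
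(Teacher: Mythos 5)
Your proposal reaches the correct conclusion but by a genuinely different route from the paper's. The paper does not compare associated gradeds of the twisted complex with the Jacobian ring at all. Instead it works on the geometric side: it quotes Batyrev--Cox's explicit description of $H^0(\mathbf{P}(\CMcal{E}),\Omega_{\mathbf{P}(\CMcal{E})}^{n+k-1}(mX_S))$ and $H^0(\mathbf{P}(\CMcal{E}),\Omega_{\mathbf{P}(\CMcal{E})}^{n+k-2}(mX_S))$ by forms $F\Omega_0/S^m$ and $\sum_iF_i\Omega_i/S^m$, invokes \cite[Prop.\ 10.1, Cor.\ 10.2]{BatCox} (this is where quasi-smoothness and ampleness enter, via the degeneration of the pole-order filtration against the Hodge filtration) to present $H^{n+k-1}(\mathbf{P}(\CMcal{E})\setminus X_S)$ as $\sum_mH^0(\Omega^{n+k-1}((m+1)X_S))$ modulo $\sum_m dH^0(\Omega^{n+k-2}(mX_S))$, and then constructs explicit cochain isomorphisms $\alpha_{r+k},\alpha_{r+k-1}$ from $(\Omega_{A[S^{-1}]/\mathbb{C}}^{r+k})_{0,(0)}$ and $(\Omega_{A[S^{-1}]/\mathbb{C}}^{r+k-1})_{0,(0)}$ onto these spaces. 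Composing with $\rho_S$ and using the kernel description from Proposition \ref{TdRCayley-rho} identifies the two sides directly, with no dimension count and no use of the surjectivity of $\theta_c\circ\theta_w\circ\rho_S$.

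The weak point in your argument is the sentence ``Consequently the associated graded of $H^{r+k}(\Omega_{A/\mathbb{C}}^\bullet,D_S)$ is the corresponding associated graded of $A/J_S$.'' The leading-term observation only yields one inclusion: every $\deg_w$-homogeneous element of $J_S\cap A_{c_B}$ arises as the leading term of an element of $D_S(\Omega_{A/\mathbb{C}}^{r+k-1})_0$, which gives the inequality $\dim H^{r+k}(\Omega_{A/\mathbb{C}}^\bullet,D_S)\leq\dim A_{c_B}/(J_S\cap A_{c_B})$. The reverse statement --- that the lower-order $d$-term causes no extra collapsing --- is precisely the degeneration you flag as the obstacle, and it is essentially equivalent to the proposition; the paper explicitly notes that the total-cohomology computation of \cite{AS} is unavailable when the grading has rank $m\geq 3$, so you cannot simply assert it. Fortunately your architecture survives without it: the upper bound above, combined with \cite[Thm.\ 10.6]{BatCox} identifying the right-hand side with $A_{c_B}/(J_S\cap A_{c_B})$ and the surjectivity of $\theta_c\circ\theta_w\circ\rho_S$ in top degree (Propositions \ref{tctw-surjective} and \ref{TdRCayley-rho2}, which are logically prior to Proposition \ref{TdR-qsm}, so there is no circularity), already forces equality of dimensions and hence the isomorphism. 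So replace the claimed equality of associated gradeds by the inequality and your proof closes; as literally written, that step is a gap.
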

\begin{proof}
For the proof, we start to review briefly the result of \cite[section 9 and section 10]{BatCox}. Denote $M_{\mathbf{P}(\CMcal{E})}:=M\oplus\mathbb{Z}^{\oplus k-1}$ and $N_{\mathbf{P}(\CMcal{E})}:=\mathrm{Hom}(M_{\mathbf{P}(\CMcal{E})},\mathbb{Z})$. Following \cite[pp.135--136]{Mav}, we may construct a rational simplicial complete fan $\Sigma_{\mathbf{P}(\CMcal{E})}$ for $\mathbf{P}(\CMcal{E})$ in $N_{\mathbf{P}(\CMcal{E}),\mathbb{R}}$ with coordinate ring $A$. The exact sequence corresponding to \eqref{ray-cl-exact} is
\begin{align*}
\xymatrix{0 \ar[r] & M_{\mathbf{P}(\CMcal{E})} \ar[r] & \displaystyle\bigoplus_{i=1}^{r+k}\mathbb{Z}D_i \ar[r] & \mathrm{Cl}(\mathbf{P}(\CMcal{E})) \ar[r] & 0}
\end{align*}
where $D_i\subseteq\mathbf{P}(\CMcal{E})$ is the divisor cut out by $q_i\in A$. Denote $e_i\in N_{\mathbf{P}(\CMcal{E})}$ the ray generator corresponding to $D_i$. Choose a $\mathbb{Z}$-basis $m_1, \ldots, m_{n+k-1}$ for $M_{\mathbf{P}(\CMcal{E})}$. For each subset $I =\{i_1, \ldots, i_{n+k-1}\} \subset \{1, \ldots, r+k\}$ consisting of $n+k-1$ elements, denote (\cite[Defintion 9.1]{BatCox})
\begin{align*}
\det (e_I)&:=\det (\langle m_j, e_{i_k} \rangle_{1 \leq j, k \leq n+k-1})\\
dq_I&:=dq_{i_1} \wedge \cdots \wedge dq_{i_{n+k-1}}\\
\widehat{q}_I&:=\prod_{i \not\in I} q_i
\end{align*}
Then $\det(e_I)dz_I$ does not depend on how the elements of $I$ are ordered.
We define (\cite[Defintion 9.3]{BatCox})
\begin{align*}
\Omega_0 =\sum_{\substack{I\subseteq\{1,\cdots,r+k\} \\ |I|=n+k-1}} \det(e_I) \hat z_{I} dz_{I} \in \Omega_{A/\mathbb{C}}^{n+k-1}
\end{align*}
According to \cite[Theorem 9.7]{BatCox}, we have
\begin{align*}
H^0\left(\mathbf{P}(\CMcal{E}), \Omega_{\mathbf{P}(\CMcal{E})}^{n+k-1}(mX_S)\right)=\left\{ \frac{F\Omega_0 }{S^{m}}\ \middle|\ \deg F =m\deg S- \deg( \Omega_0 )\right\}, \quad m \geq 0.
\end{align*}

Given $1\leq i\leq r+k$, denote (\cite[Definition 9.8]{BatCox})
\begin{align*}
\Omega_i = \sum_{\substack{|J|=n+k-2 \\ J\cup\{i\}=I}} \det(e_{J \cup\{i\}})\hat z_{J \cup \{i\}} dz_J
\end{align*}
where $\det(e_{J \cup\{i\}})$ is computed by ordering the elements of $J\cup \{i\}$ so that $i$ is first so that $\det(e_{J \cup\{i\}})dz_J$ is well-defined.
Because $X_S$ is an ample hypersurface in $\mathbf{P}(\CMcal{E})$, \cite[Corollary 9.16]{BatCox} implies that
\begin{align*}
H^0\left(\mathbf{P}(\CMcal{E}), \Omega_{\mathbf{P}(\CMcal{E})}^{n+k-2}(mX_S)\right)=\left\{ \sum_{i=1}^{r+k}\frac{F_i \Omega_i }{S^{m}}\ \middle|\ \deg F_i =m\deg S - \deg( \Omega_i )\right\}, \quad m \geq 0.
\end{align*}

Let $F^\bullet$ be the Hodge filtration on $H^{n+k-1}(\mathbf{P}(\CMcal{E})\setminus X_S)$ and denote the associated graded piece by $\mathrm{Gr}_F^m H^{n+k-1}(\mathbf{P}(\CMcal{E})\setminus X_S)$.
\cite[Proposition 10.1]{BatCox} implies that
\begin{align}\label{coh}
\frac{H^0\left(\mathbf{P}(\CMcal{E}), \Omega_{\mathbf{P}(\CMcal{E})}^{n+k-1}((m+1)X_S)\right)}{H^0\left(\mathbf{P}(\CMcal{E}), \Omega_{\mathbf{P}(\CMcal{E})}^{n+k-1}(mX_S)\right)+ dH^0\left(\mathbf{P}(\CMcal{E}), \Omega_{\mathbf{P}(\CMcal{E})}^{n+k-2}(mX_S)\right)} =0, \quad m \geq n+k.
\end{align}
\cite[Corollary 10.2]{BatCox} says that
\begin{align}\label{cohtwo}
\mathrm{Gr}_F^{-m+n+k-1}H^{n+k-1}(\mathbf{P}(\CMcal{E})\setminus X_S)\cong\frac{H^0\left(\mathbf{P}(\CMcal{E}), \Omega_{\mathbf{P}(\CMcal{E})}^{n+k-1}((m+1)X_S)\right)}{H^0\left(\mathbf{P}(\CMcal{E}), \Omega_{\mathbf{P}(\CMcal{E})}^{n+k-1}(mX_S)\right)+dH^0\left(\mathbf{P}(\CMcal{E}),\Omega_{\mathbf{P}(\CMcal{E})}^{n+k-2}(mX_S)\right)}
\end{align}
for $ 0 \le m \le n+k-1$. Then we have $\mathbb{C}$-vector space isomorphisms
\begin{align}\label{iso}
\begin{aligned}
&\quad H^{n+k-1}(\mathbf{P}(\CMcal{E})\setminus X_S) \\
&\cong
\bigoplus_{0 \leq m\leq n+k-1} \mathrm{Gr}_F^m H^{n+k-1}(\mathbf{P}(\CMcal{E})\setminus X_S)\\
&\cong
\bigoplus_{ m \ge 0}\frac{H^0\left(\mathbf{P}(\CMcal{E}), \Omega_{\mathbf{P}(\CMcal{E})}^{n+k-1}((m+1)X_S)\right)}{H^0\left(\mathbf{P}(\CMcal{E}), \Omega_{\mathbf{P}(\CMcal{E})}^{n+k-1}(mX_S)\right)+dH^0\left(\mathbf{P}(\CMcal{E}), \Omega_{\mathbf{P}(\CMcal{E})}^{n+k-2}(mX_S)\right)} 
\text{ by \eqref{coh} and \eqref{cohtwo}}\\
&\cong
\frac{\displaystyle\sum_{m\geq 0}H^0\left(\mathbf{P}(\CMcal{E}), \Omega_{\mathbf{P}(\CMcal{E})}^{n+k-1}((m+1)X_S)\right)}
{\displaystyle\sum_{m\geq 0}d H^0\left(\mathbf{P}(\CMcal{E}), \Omega_{\mathbf{P}(\CMcal{E})}^{n+k-2}(mX_S)\right)}
\end{aligned}
\end{align}
where in the finial step we use the fact $H^0(\mathbf{P}(\CMcal{E}), \Omega_{\mathbf{P}(\CMcal{E})}^{n+k-1}(0X_S) )=0$, which follows from that statement that there is no polynomial $F \in A$ such that
\begin{align*}
\deg_w F = -\deg_w(\Omega_0)=-\sum_{i=1}^{r+k} \deg q_i < 0
\end{align*}
because $\deg_w x_i =0$ and $\deg_w y_i =1$. We now observe that there is an isomorphism of $\mathbb{C}$-vector spaces
\begin{align*}
\xymatrix{
\displaystyle\left(\Omega^{r+k}_{A[S^{-1}]/\mathbb{C}}\right)_{0,(0)} \ar[d]^-\wr_-{\alpha_{r+k}} & \displaystyle\frac{F}{S^m}dq_1 \wedge\cdots\wedge dq_{r+k} \ar@{|->}[d] \\
\displaystyle\sum_{m\geq 0}H^0\left(\mathbf{P}(\CMcal{E}), \Omega_{\mathbf{P}(\CMcal{E})}^{n+k-1}((m+1)X_S)\right) & \displaystyle\frac{F}{S^m} \Omega_0
}
\end{align*}
and an isomorphism of $\mathbb{C}$-vector spaces
\begin{align*}
\xymatrix{
\displaystyle\left(\Omega^{r+k-1}_{A[S^{-1}]/\mathbb{C}}\right)_{0,(0)} \ar[d]^-\wr_-{\alpha_{r+k-1}} & \displaystyle\sum_{j=1}^{r+k}\frac{F_j}{S^m}dq_1 \wedge\cdots\wedge\widehat{dq_j} \wedge \cdots \wedge dq_{r+k} \ar@{|->}[d] \\
\displaystyle\sum_{m\geq 0} H^0\left(\mathbf{P}(\CMcal{E}), \Omega_{\mathbf{P}(\CMcal{E})}^{n+k-2}(mX_S)\right) & \displaystyle\sum_{j=1}^{r+k}(-1)^{j-1}\frac{F_j}{S^m}\Omega_j.
}
\end{align*}
Moreover, they satisfy the following cochain condition.
\begin{lemma}\label{lem1}
We have
\be
d\circ\alpha_{r+k-1}=\alpha_{r+k}\circ d.
\ee
\end{lemma}
\begin{proof}
We compute
\begin{align*}
&\quad(d\circ\alpha_{r+k-1})\left(\sum_{j=1}^{r+k}\frac{F_j}{S^m}dq_1 \wedge \cdots \wedge \widehat{dq_j} \wedge \cdots \wedge dq_{r+k}\right)\\
&=d\left(\sum_{j=1}^{r+k}(-1)^{j-1}\frac{ F_j \Omega_j}{S^m}\right)\\
& = \sum_{j=1}^{r+k}(-1)^{j-1}\left(S\frac{\partial F_j}{\partial q_j}-mF_j\frac{\partial S }{\partial q_j}\right)\Omega_0\\
&= \alpha_{r+k} \left(\sum_{j=1}^{r+k}(-1)^{j-1}\frac{1}{S^{m+1}}\left(S\frac{\partial F_j}{\partial q_j}-mF_j\frac{\partial S }{\partial q_j}\right)dq_1\wedge \cdots \wedge dg_{r+k}\right) \\
&= \alpha_{r+k} \circ d \left( \sum_{j=1}^{r+k}\frac{F_j}{S^m}dq_1 \wedge \cdots \wedge \widehat{dq_j} \wedge \cdots \wedge dq_{r+k}\right)
\end{align*}
where the second equality follows from \cite[Lemma 10.7]{BatCox}.
\end{proof}

By Remark \ref{rho-restriction}, Lemma \ref{rho-properties}, and Lemma \ref{lem1}, we have the commutative diagram where the horizontal maps $\alpha_{r+k}$ and $\alpha_{r+k-1}$ are $\mathbb{C}$-linear isomorphisms:
\begin{align*}
\xymatrixcolsep{3pc}\xymatrix{
\left(\Omega_{A/\mathbb{C}}^{r+k}\right)_0 \ar[r]^-{\rho_{S,r+k}} & \left(\Omega^{r+k}_{A[S^{-1}]/\mathbb{C}}\right)_{0,(0)} \ar[r]^-{\alpha_{r+k}} & \displaystyle\bigoplus_{m\geq 0}H^0\left(\BP(\CMcal{E}), \Omega_{\BP(\CMcal{E})}^{n+k-1}((m+1)X_S)\right) \\
\left(\Omega_{A/\mathbb{C}}^{r+k-1}\right)_0 \ar[r]_-{\rho_{S,r+k-1}} \ar[u]^-{d+dS\wedge-} & \left( \Omega^{r+k-1}_{A[S^{-1}]/\mathbb{C}}\right)_{0,(0)} \ar[r]_-{\alpha_{r+k-1}} \ar[u]^-d & \displaystyle\bigoplus_{m\geq 0} H^0\left(\BP(\CMcal{E}), \Omega_{\BP(\CMcal{E})}^{n+k-2}(mX_S)\right). \ar[u]^-d
}
\end{align*}
By using Proposition \ref{TdRCayley-rho} (which describes the kernels of $\rho_{S,r+k}$ and $\rho_{S,r+k-1}$), we conclude that this commutative diagram and \eqref{iso} imply the desired isomorphism.
\end{proof}

\section{Weak primitive forms and formal flat $F$-manifolds}\label{sec3}


\subsection{Twisted de Rham complexes and dGBV algebras}\label{sec3.1}

We continue with the notations so far. We define the following $\mathbb{Z}$-graded super-commutative algebra $\mathcal{A}$ and differentials $\Delta$, $Q_S$ and $K_S$ as follows:
\begin{align}\label{dgbv}
\begin{aligned}
\mathcal{A}&:= A[\eta]={\mathbb{C}}[q_{1},q_2,\dots,q_{r+k}][\eta_{1},\eta_2,\dots,\eta_{r+k}],\\
\Delta&:=\xymatrix{\displaystyle\sum_{i=1}^{r+k}\frac{\partial}{\partial q_i}\frac{\partial}{\partial\eta_i}:\mathcal{A} \ar[r] & \mathcal{A}}\\
Q_S&:=\xymatrix{\displaystyle\sum_{i=1}^{r+k} \frac{\partial S}{\partial q_i}\frac{\partial}{\partial\eta_i}: \mathcal{A} \ar[r] & \mathcal{A}},\\
K_S&:=\xymatrix{Q_S+\Delta:\mathcal{A} \ar[r] & \mathcal{A}}.
\end{aligned}
\end{align}
where the $\eta_i$'s are other variables (of cohomological degree $-1$) corresponding to $q_i$'s. The additive cohomological $\mathbb{Z}$-grading of $\mathcal{A}$ is given by the rules ($|f|=m$ means $f\in\mathcal{A}^m$)
\begin{align*}
|q_i|=0, \ |\eta_i|=-1, \quad i=1, \dots, {r+k}.
\end{align*}
Note that $\cA^0=A$.
Then we have the following cochain complex
\begin{align*}
\xymatrix{0 \ar[r] & \mathcal{A}^{-(r+k)} \ar[r]^-{K_S} & \mathcal{A}^{-(r+k)+1} \ar[r]^-{K_S} & \cdots \ar[r]^-{K_S} & \mathcal{A}^0 \ar[r] & 0}
\end{align*}
We define the $\ell_2$-descendant of $K_S$ with respect to the product $\cdot$ as follows:
\begin{align*}
\ell_2^{K_S}(a,b):= K_S(ab)-K_S(a)b -(-1)^{|a|} a K_S(b), \quad a,b \in \mathcal{A}.
\end{align*}
Then $(\cA, \cdot, Q_S, K_S, \ell_2^{K_S})$ becomes a dGBV (differential Gerstenhaber-Batalin-Vilkovisky) algebra; see \cite[Definition 2.1]{KKP} for the definition of dGBV algebras.
Note that $\eta_\m \eta_\n= -\eta_\n \eta_\m$, and
\begin{align*}
|q_\m|+|\eta_\m|=-1,\quad\ch(q_\m)+\ch(\eta_\m)=0,\quad\wt(q_\m)+\wt(\eta_\m)=1
\end{align*}
with $\ch(q_\mu) = \deg_c (q_\mu)$ and $\wt(q_\mu) = \deg_w(q_{\mu})$. Following Notation \ref{notation-deg}, we write
\begin{align*}
\mathcal{A} = \bigoplus_{-(r+k)\leq j \leq 0}\bigoplus_{\lambda\in\mathrm{Cl}(\mathbf{P}_\Sigma)}\bigoplus_{w \geq 0}\mathcal{A}^j_{\l, (w)},
\end{align*}
i.e., $f\in \mathcal{A}^j_{\lambda,(w)}$ has $|f|=j$, $\ch(f)=\lambda$ and $\wt(f)=w$. Define the background charge to be
\begin{align*}
c_B := -\sum_{i=1}^{r+k} \ch(q_i)= -\sum_{i=1}^{r+k} \deg_c(q_i) \in \mathrm{Cl}(\mathbf{P}_\Sigma).
\end{align*}

\begin{definition} We define the following $\mathbb{C}$-linear map with $q^u= q_1^{u_1} \cdots q_{r+k}^{u_{r+k}}$ and $i_1<\cdots<i_\ell$:
\begin{align}\label{bvconversion}
\begin{aligned}
\xymatrix{
(\mathcal{A},Q_S+\Delta)[r+k] \ar[d]_-\mu & q^u\eta_{i_1}\cdots\eta_{i_\ell} \ar@{|->}[d] \\
\left(\Omega_{A/\mathbb{C}}^\bullet,d+dS\wedge-\right) & (-1)^{i_1+\cdots+i_\ell+\ell}q^udq_1\wedge\cdots\wedge\widehat{dq_{i_1}}\cdots\wedge\widehat{dq_{i_\ell}}\cdots\wedge dq_{r+k}
.}
\end{aligned}
\end{align}
\end{definition}

The usefulness
of the map $\mu$ lies in the fact that it changes the multiplication structure so that $(\cA, \cdot, K_S, \ell_2^{K_S})$ is a dGBV algebra but $(\Omega_{A/\mathbb{C}}^\bullet, \wedge, d+ dS\wedge-)$ does not induce a dGBV algebra structure.

\begin{lemma}\label{mu-properties} We list the properties of $\mu$:
\begin{quote}
(1) $\mu$ is a $\mathbb{C}$-linear cochain map.\\
(2) For each $1\leq i\leq r+k$ with $\displaystyle\partial_{\eta_i}:=\frac{\partial}{\partial\eta_i}$,
\begin{align*}
\xymatrixcolsep{3pc}\xymatrix{
\mathcal{A} \ar[d]_-\mu \ar[r]^-{\partial_{\eta_i}} & \mathcal{A} \ar[d]^-\mu \\
\Omega_{A/\mathbb{C}}^\bullet \ar[r]_-{dq_i\wedge-} & \Omega_{A/\mathbb{C}}^\bullet
}
\end{align*}
is commutative. Hence $\Delta$ corresponds to $d$ under $\mu$.\\
(3) For each $1\leq i\leq r+k$ with $\displaystyle\partial_{q_i}:=\frac{\partial}{\partial q_i}$,
\begin{align*}
\xymatrixcolsep{3pc}\xymatrix{
\mathcal{A} \ar[d]_-\mu \ar[r]^-{\eta_i} & \mathcal{A} \ar[d]^-\mu \\
\Omega_{A/\mathbb{C}}^\bullet \ar[r]_-{\langle\partial_{q_i},-\rangle} & \Omega_{A/\mathbb{C}}^\bullet
}
\end{align*}
is commutative. Hence $Q_S$ corresponds to $dS\wedge-$ under $\mu$.\\
(4) The ($\ch=\lambda$)-component corresponds to ($\deg_c=\lambda-c_B$)-component under $\mu$.
\end{quote}
\end{lemma}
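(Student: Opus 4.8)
The plan is to reduce everything to the two intertwining relations (2) and (3), proved by a direct sign computation on the monomial basis $q^u\eta_{i_1}\cdots\eta_{i_\ell}$ with $i_1<\cdots<i_\ell$, and then to read off (1) and (4) as formal consequences. For (1), $\mathbb{C}$-linearity is built into the definition of $\mu$, since $\mu$ is prescribed on the basis and extended linearly; the substance of (1) is that $\mu$ is a cochain map, i.e. $\mu\circ(Q_S+\Delta)=(d+dS\wedge-)\circ\mu$, and this I will deduce once the correspondences $\Delta\leftrightarrow d$ and $Q_S\leftrightarrow dS\wedge-$ are in hand.

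To prove (2) I would fix $a=q^u\eta_{i_1}\cdots\eta_{i_\ell}$ and split on whether $i\in\{i_1,\dots,i_\ell\}$. If $i\notin\{i_1,\dots,i_\ell\}$ then $\partial_{\eta_i}a=0$, while $dq_i$ already appears in $\mu(a)$, so $dq_i\wedge\mu(a)=0$ as well, and both sides vanish. If $i=i_s$, then $\partial_{\eta_i}a=(-1)^{s-1}q^u\eta_{i_1}\cdots\widehat{\eta_{i_s}}\cdots\eta_{i_\ell}$, whereas forming $dq_i\wedge\mu(a)$ splices $dq_{i_s}$ back into its ordered slot at a sign cost equal to the number of surviving indices below $i_s$; comparing, the two global signs differ by an even power of $-1$ and hence coincide. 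The correspondence $\Delta\leftrightarrow d$ then follows because $\partial_{q_i}$ touches only the polynomial coefficient and so commutes with $\mu$, giving $\mu\circ\Delta=(\sum_i dq_i\wedge\partial_{q_i})\circ\mu=d\circ\mu$; running the same argument with $\partial S/\partial q_i$ in place of $\partial_{q_i}$ yields $Q_S\leftrightarrow dS\wedge-$, and with it the cochain property (1).

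Statement (3) is handled symmetrically: $\eta_i a=0$ when $i\in\{i_1,\dots,i_\ell\}$, matching the vanishing of $\langle\partial_{q_i},\mu(a)\rangle$ since $dq_i$ is then absent from $\mu(a)$; otherwise inserting $\eta_i$ into the ordered product and contracting $dq_i$ out of the ordered wedge produce signs whose net difference is an even power of $-1$. Finally (4) is a grading count: using $\ch(\eta_\mu)=-\deg_c q_\mu$ one computes $\deg_c\mu(a)=\deg_c(q^u)+\sum_{j\notin\{i_1,\dots,i_\ell\}}\deg_c q_j=\ch(a)+\sum_{j}\deg_c q_j=\ch(a)-c_B$, which is exactly the asserted shift by $-c_B$.

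The only real obstacle is keeping the sign bookkeeping consistent: the global prefactor $(-1)^{i_1+\cdots+i_\ell+\ell}$ in the definition of $\mu$ must be reconciled with the Koszul signs produced when a single index is inserted into or deleted from an ordered tuple of $\eta$'s. The clean device I expect to use is to record once that splicing $dq_i$ into or out of $\bigwedge_{j\notin\{i_1,\dots,i_\ell\}}dq_j$ costs the sign $(-1)^{\#\{j\notin\{i_1,\dots,i_\ell\}\,:\,j<i\}}$; once this is in place, each of the case comparisons above collapses to checking that a difference of exponents is even, so no genuine difficulty remains beyond the arithmetic.
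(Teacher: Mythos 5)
Your proposal is correct and is essentially the paper's own proof: the paper disposes of this lemma with ``The results follow from direct computation,'' and your write-up is exactly that computation, with the sign bookkeeping (the parity differences $2(i_s-s)$ in (2) and $2s+2$ in (3) between the Koszul sign from the $\eta$'s and the splicing/contraction sign in the ordered wedge) worked out correctly, and (1) and (4) correctly reduced to the intertwining relations and a degree count. No gaps.
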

\begin{proof} The results follow from direct computation.
\end{proof}

By Lemma \ref{mu-properties}, $\mu$ restricts to
\begin{align}
\xymatrix{\displaystyle\mu:\left(\mathcal{A}^{\bullet>-(r+k)}_{c_B},K_S\right) \ar[r] & \left(\Omega_{A/\mathbb{C}}^{\bullet>0},D_S\right)_0[r+k]}
\end{align}
Hence, by Remark \ref{rho-restriction}, we have the following sequence of maps:
\begin{align*}
\xymatrix{
\left(\mathcal{A}^{\bullet>-(r+k)}_{c_B},K_S\right) \ar[r]^-\mu & \left(\Omega_{A/\mathbb{C}}^{\bullet>0},D_S\right)_0[r+k] \ar[d]^-{\rho_S} & \\
& \left(\Omega_{A[S^{-1}]/\mathbb{C}}^{\bullet>0},d\right)_{0,(0)}[r+k] \ar[r]^-{\theta_c\circ\theta_w} & \left(\widehat{\Omega}_{A[S^{-1}]^\mathbf{G}/\mathbb{C}}^{\bullet>0},d\right)[n+k-1]}.
\end{align*}
In particular, if any of the assumptions in Theorem \ref{TdRCayley-comparison} holds, then we have isomorphisms:
\begin{align*}
\xymatrixcolsep{3pc}\xymatrix{\displaystyle\frac{A_{c_B}}{K_S(\mathcal{A}_{c_B}^{-1})}=H^0(\mathcal{A}_{c_B}^\bullet,K_S) \ar[r]^-\sim_-\mu & H^{r+k}\left(\Omega_{A/\mathbb{C}}^\bullet,D_S\right) \ar[r]^-\sim_-{\theta_c\circ\theta_w\circ\rho_S} & H^{n+k-1}(\mathbf{P}(\CMcal{E})\setminus X_S).}
\end{align*}

\subsection{The Calabi-Yau condition}\label{sec3.2}

Suppose from now on that $X_G\subseteq\mathbf{P}_\Sigma$ is a quasi-smooth complete intersection of ample hypersurfaces.

The cohomology of $\BP(\CMcal{E}) \setminus X_S$ is concentrated in charge $c_B$ part as we saw in Proposition \ref{TdR-qsm}.
%
In fact, according to \cite[Theorem 10.6]{BatCox}, we have
\begin{align*}
H^{n+k-1}(\BP(\CMcal{E})\setminus X_S) \cong \frac{A_{c_B}}{\mathrm{Jac}(S) \cap A_{c_B}}=A_{c_B}/Q_S(\mathcal{A}_{c_B}^{-1})
\end{align*}
where $\mathrm{Jac}(S)$ is the Jacobian ideal of $S$.
On the other hand, Proposition \ref{TdR-qsm} and Lemma \ref{mu-properties} say that
\begin{align*}
\xymatrix{\displaystyle H^{n+k-1} (\BP(\CMcal{E})\setminus X_S) \cong \frac{ \left(\Omega_{A/\mathbb{C}}^{r+k}\right)_{\deg_c=0} }{(d+dS\wedge) \left(\Omega_{A/\mathbb{C}}^{r+k-1}\right)_{\deg_c=0}}  \ar[r]^-{\mu^{-1}} & \displaystyle A_{c_B}/(Q_S+\Delta)(\mathcal{A}_{c_B}^{-1})}
\end{align*}
where $\mu$ was given in \eqref{bvconversion}.
This implies the following proposition.
\begin{proposition} We have a $\mathbb{C}$-linear isomorphism
\begin{align*}
\xymatrix{\displaystyle\frac{A_{c_B}}{Q_S(\mathcal{A}^{-1}_{c_B})} \ar[r]^-\sim & \displaystyle\frac{A_{c_B}}{K_S(\mathcal{A}^{-1}_{c_B})}.}
\end{align*}
\end{proposition}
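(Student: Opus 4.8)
The plan is to realize both quotients as two different presentations of one and the same vector space, namely the primitive cohomology $H^{n+k-1}(\mathbf{P}(\CMcal{E})\setminus X_S)$, and then to compose the two identifications. In this way the proposition becomes a formal consequence of the two residue/Jacobian descriptions already assembled in the preceding discussion, and no new geometric input is needed.

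First I would treat the target $A_{c_B}/K_S(\mathcal{A}_{c_B}^{-1})$. By Proposition \ref{TdR-qsm}, the quasi-smooth ample complete intersection hypothesis yields $H^{n+k-1}(\mathbf{P}(\CMcal{E})\setminus X_S)\cong\left(\Omega_{A/\mathbb{C}}^{r+k}\right)_0/(d+dS\wedge-)\left(\Omega_{A/\mathbb{C}}^{r+k-1}\right)_0$, and the conversion map $\mu$ of \eqref{bvconversion} intertwines $d+dS\wedge-$ with $K_S=Q_S+\Delta$ while carrying the charge-$c_B$ part to the $\deg_c=0$ part (Lemma \ref{mu-properties}, parts (2), (3) and (4)). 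Hence $\mu^{-1}$ supplies an isomorphism $A_{c_B}/K_S(\mathcal{A}_{c_B}^{-1})\cong H^{n+k-1}(\mathbf{P}(\CMcal{E})\setminus X_S)$.

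Next I would treat the source $A_{c_B}/Q_S(\mathcal{A}_{c_B}^{-1})$. Here \cite[Theorem 10.6]{BatCox} identifies $H^{n+k-1}(\mathbf{P}(\CMcal{E})\setminus X_S)$ with $A_{c_B}/(\mathrm{Jac}(S)\cap A_{c_B})$. The only point requiring a routine verification is that $\mathrm{Jac}(S)\cap A_{c_B}=Q_S(\mathcal{A}_{c_B}^{-1})$: writing $Q_S(f\eta_i)=f\,\frac{\partial S}{\partial q_i}$ and using $\deg_c S=0$ (so $\deg_c\frac{\partial S}{\partial q_i}=-\deg_c q_i$), one checks that $f\eta_i\in\mathcal{A}_{c_B}^{-1}$ precisely when $f\,\frac{\partial S}{\partial q_i}$ lies in $A_{c_B}$, so the image of $Q_S$ on $\mathcal{A}_{c_B}^{-1}$ is exactly the charge-$c_B$ graded piece of the Jacobian ideal. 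Composing the two isomorphisms then produces the desired $\mathbb{C}$-linear isomorphism $A_{c_B}/Q_S(\mathcal{A}_{c_B}^{-1})\cong A_{c_B}/K_S(\mathcal{A}_{c_B}^{-1})$.

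Finally, a word on where the content really lies. Conceptually the isomorphism is \emph{not} induced by the identity of $A_{c_B}$: since $\deg_w S=1$, the operator $Q_S$ preserves $\deg_w$ while $\Delta$ lowers it by one, so $K_S$ is a weight-filtered perturbation of $Q_S$ whose associated graded is $Q_S$ itself (for instance $K_S(f\eta_i)=f\,\frac{\partial S}{\partial q_i}+\frac{\partial f}{\partial q_i}$ differs from $Q_S(f\eta_i)$ by a strictly lower-weight term). This is exactly why the two cokernels have equal dimension. The honest work, establishing that both presentations compute $H^{n+k-1}(\mathbf{P}(\CMcal{E})\setminus X_S)$, has already been carried out in Proposition \ref{TdR-qsm} (resting on the finiteness of Corollary \ref{Dwork-finite}) and in \cite{BatCox}; the main obstacle is therefore located in those inputs rather than in this proposition, and granting them the proof is just the two-step composition above.
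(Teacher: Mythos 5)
Your proposal is correct and follows essentially the same route as the paper, which likewise obtains the isomorphism by composing the Jacobian-ring description of $H^{n+k-1}(\mathbf{P}(\CMcal{E})\setminus X_S)$ from \cite[Theorem 10.6]{BatCox} with the $K_S$-presentation coming from Proposition \ref{TdR-qsm} and Lemma \ref{mu-properties}. Your added check that $\mathrm{Jac}(S)\cap A_{c_B}=Q_S(\mathcal{A}_{c_B}^{-1})$ is a harmless elaboration of an equality the paper states without comment.
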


We can relate this vector space to the primitive\footnote{$H^{n-k}_{\pr}(X_{G})$ is defined to the cokernel of the natural map $H^{n-k}(\BP_\Sigma) \to H^{n-k}(X_{G})$.} middle-dimensional cohomology $H^{n-k}_{\pr}(X_{G})$ of $X_{G}$: \cite[Proposition 3.2]{Mav} says that there is an exact sequence of mixed Hodge structures
\bea \label{exactprim}
0 \to H^{n-k-1}(\BP_\Sigma) \xrightarrow{\cup [X_{G}]} H^{n+k-1}(\BP_\Sigma) \to
H^{n+k-1}(\BP_\Sigma\setminus X_{G}) \to H_{\pr}^{n-k}(X_{G}) \to 0
\eea
where $\cup [X_{G}]$ is the cup product with the cohomology class $[X_S] \in H^{2k}(\BP(\CMcal{E}))$ of $X_S$.
Thus, if $\cup [X_{G}]$ is surjective, then we have
\be
H_{\pr}^{n-k}(X_{G}) \cong 
H^{n+k-1}(\BP_\Sigma\setminus X_{G}).
\ee
Therefore, if $\cup [X_{G}]$ is surjective, by \eqref{comp} we conclude 
\be
H_{\pr}^{n-k}(X_{G})  \cong H^{n+k-1}(\BP_\Sigma\setminus X_{G}) \cong_{\eqref{comp}}
H^{n+k-1}(\BP(\CMcal{E})\setminus X_S)\cong\frac{A_{c_B}}{\mathrm{Jac}(S) \cap A_{c_B}}=A_{c_B}/Q_S(\mathcal{A}_{c_B}^{-1}).
\ee

\begin{definition}
If $c_B=0$, then we call $X_{G} \subset \BP_\Sigma$ a Calabi-Yau toric complete intersection.
\end{definition}
If $X_{G}$ is Calabi-Yau, then $H^{n+k-1}(\BP_\Sigma \setminus X_{G})$ has a natural structure of commutative ring induced from $A_0/Q_S(\mathcal{A}_{0}^{-1})=A_{\deg_c=0}/Q_S(\mathcal{A}_{\deg_c=0}^{-1})$. We like to extend this commutative ring structure to (formal) vector fields on $H^{n+k-1}(\BP_\Sigma \setminus X_{G})$ so that it is compatible with the Lie bracket of vector fields in a certain way (so called, the (formal) flat $F$-manifold structure).

\subsection{Flat $F$-manifolds and Frobenius manifolds}\label{sec3.3}
Let $M$ be a complex manifold with the holomorphic structure sheaf $\CMcal{O}_M$.
Let $\cT_M$ be the holomorphic tangent sheaf.
A $(k,l)$-tensor means an $\CMcal{O}_M$-linear map $T: \cT_M^{\otimes k} \to \cT_M^{\otimes l}$.
The Lie derivative $\Lie_X$ along a vector field $X$ is a derivation on the sheaf of $(k,l)$-tensors, as well as the covariant derivative $\nabla_X T$ with respect to a connection $\nabla$ on $M$.
%
Thus the Lie derivative $\Lie_X T$ of a $(k,l)$-tensor along a vector field $X$ is again a $(k,l)$-tensor, as well as the covariant derivative $\nabla_X T$ with respect to a connection $\nabla$ on $M$.
Then $\nabla T$ can be viewed as a $(k+1,l)$-tensor.

\begin{definition}[flat $F$-manifolds]
A flat $F$-manifold is a quadruple $(M,\circ, e, \nabla)$ where $M$ is a complex connected manifold, $\circ$ is a commutative and associative $\CMcal{O}_M$-bilinear multiplication $\cT_M \times \cT_M \to \cT_M$, $e$ is a global unit vector field with respect to $\circ$, $\nabla$ is a flat connection on $\cT_M$, and $\circ$ is compatible with $\nabla$, i.e. each element of the pencil $(\nabla^z)_{z\in \C}$, defined by $\nabla_X^z(Y) = \nabla_X Y + z X\circ Y$ is flat and torsion-free:
\be
&& \nabla^z_{X}Y -\nabla^z_{Y}X=[X, Y] \quad (\text{torsion-free}),\\
&&\nabla^z_{X} \nabla^z_{Y} -\nabla^z_{Y} \nabla^z_{X} =\nabla^z_{[X, Y]} \quad (\text{flat}).
\ee
\end{definition}


\begin{definition}[Frobenius manifolds]
A Frobenius manifold is a tuple $(M, \circ, e, E, g)$ where where $M$ is a complex connected manifold with metric $g$, $\circ$ is a commutative and associative $\CMcal{O}_M$-bilinear multiplication $\cT_M \times \cT_M \to \cT_M$, $e$ is a global unit vector field with respect to $\circ$, and $E$ is another global vector field subject to the following conditions:
\begin{enumerate}[(1)]
\item (invariance) $g(X\circ Y, Z) = g(X, Y\circ Z)$
\item (potentiality) the (3,1)-tensor $\nabla^g \circ$ is symmetric where $\nabla^g$ is the Levi-Civita connection of $g$,
\item the metric $g$ is flat, i.e. $\nabla^g$ is a flat connection, $[\nabla^g_X,\nabla^g_Y]=\nabla^g_{[X,Y]}$,
\item $\nabla^g e=0$,
\item $\Lie_E(\circ) = \circ$ and $\Lie_E(g) = D \cdot g$ for some $D \in \C$.
\end{enumerate}
\end{definition}
%
%
%

One can similarly define the formal version of flat $F$-manifolds and Frobenius manifolds: we can consider the formal structure sheaf and the formal tangent bundle instead of the holomorphic structure sheaf and the holomorphic tangent bundle.
The following proposition is well-known (\cite{Her02}, \cite{HM}).
\begin{proposition}
If $(M, \circ, e, E,g)$ is a Frobenius manifold, then $(M, \circ, e, \nabla^g)$ is a flat $F$-manifold.
\end{proposition}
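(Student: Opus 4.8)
The plan is to verify the four defining data of a flat $F$-manifold $(M,\circ,e,\nabla^g)$ directly against the Frobenius axioms, the only substantial work being the torsion-freeness and flatness of the \emph{entire} pencil $\nabla^z_X Y := \nabla^g_X Y + z\,(X\circ Y)$, $z\in\C$. Three of the items are immediate: the commutativity and associativity of $\circ$ and the fact that $e$ is a global $\circ$-unit are part of the Frobenius data and transfer verbatim, while the flatness of the Levi-Civita connection $\nabla^g$ is precisely axiom (3). So everything reduces to showing that each $\nabla^z$ is torsion-free and flat.

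For torsion-freeness I would compute $\nabla^z_X Y - \nabla^z_Y X = (\nabla^g_X Y - \nabla^g_Y X) + z\,(X\circ Y - Y\circ X)$. The first bracket equals $[X,Y]$ because $\nabla^g$ is the Levi-Civita connection and is therefore torsion-free, and the second bracket vanishes by commutativity of $\circ$. Hence $\nabla^z_X Y - \nabla^z_Y X = [X,Y]$ for every $z$.

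The heart of the proof is flatness. Writing $C$ for the $(2,1)$-tensor $C(X,Y)=X\circ Y$, I would expand the curvature
\[
R^z(X,Y)Z := \nabla^z_X\nabla^z_Y Z - \nabla^z_Y\nabla^z_X Z - \nabla^z_{[X,Y]}Z
\]
as a polynomial $R^z = R^{(0)} + z\,R^{(1)} + z^2\,R^{(2)}$ and show that each coefficient vanishes. The coefficient $R^{(0)}$ is just the curvature of $\nabla^g$, which is zero by axiom (3). The coefficient $R^{(2)}$ equals $X\circ(Y\circ Z) - Y\circ(X\circ Z)$, which vanishes by associativity together with commutativity of $\circ$. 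The linear coefficient $R^{(1)}$ is the delicate one, and I expect it to be the main obstacle in the bookkeeping.

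To handle $R^{(1)}$ I would rewrite $\nabla^g_X(Y\circ Z) = (\nabla^g_X C)(Y,Z) + (\nabla^g_X Y)\circ Z + Y\circ(\nabla^g_X Z)$ (and its $X\leftrightarrow Y$ analogue) so as to extract the covariant derivative of $C$. After cancellation the genuinely connection-valued terms collapse, and the leftover $\bigl(\nabla^g_X Y - \nabla^g_Y X - [X,Y]\bigr)\circ Z$ vanishes by torsion-freeness of $\nabla^g$. What survives is exactly $(\nabla^g_X C)(Y,Z) - (\nabla^g_Y C)(X,Z)$, the antisymmetrization in the first two slots of the $(3,1)$-tensor $\nabla^g\!\circ = \nabla^g C$; this is what potentiality, axiom (2), asserts to be symmetric, so $R^{(1)}=0$. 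The care point is precisely this step: one must invoke potentiality, torsion-freeness of $\nabla^g$, and the symmetry of $C$ in its lower arguments simultaneously to reduce everything to the single identity $(\nabla^g_X C)(Y,Z)=(\nabla^g_Y C)(X,Z)$. I would also note that the Euler field $E$ and axiom (5) play no role here, and that axioms (1) and (4) are not needed for the connection-theoretic statement; only (2) and (3) are essential. Thus $\nabla^z$ is flat and torsion-free for all $z\in\C$, and $(M,\circ,e,\nabla^g)$ is a flat $F$-manifold.
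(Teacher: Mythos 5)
Your proof is correct. Note, however, that the paper itself gives no proof of this proposition: it is stated as well-known with references to Hertling's book and Hertling--Manin, so there is no argument in the paper to compare yours against. What you have written is the standard argument: expand the curvature of the pencil $\nabla^z$ as a quadratic polynomial in $z$, kill the constant term by flatness of $\nabla^g$ (axiom (3)), the quadratic term by commutativity plus associativity of $\circ$, and the linear term by reducing it --- via the Leibniz rule for $\nabla^g C$ and torsion-freeness of the Levi-Civita connection --- to the antisymmetrization $(\nabla^g_X C)(Y,Z)-(\nabla^g_Y C)(X,Z)$, which vanishes by potentiality (axiom (2)). Each of these steps checks out, and your observation that axioms (1), (4), (5) and the Euler field play no role is accurate; the argument is complete.
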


Let $M$ be an affine space of finite dimension over $\mathbb{C}$, whose formal structure sheaf and formal tangent sheaf are denoted by ${O}_M$ and ${T}_{M}$ respectively. 
%
Let $t_M := \{t^\alpha\}$ be the formal coordinates on $M$. Then the stalk $O_{M,0}$ at 0 is given by $\mathbb{C}[\![t_M]\!]$. 
Let $\circ$ be a ${O}_{M}$-bilinear operation $\circ :{T}_M\times {T}_M \rightarrow {T}_M$. 
On the coordinates $t_M$, the product $\partial_\alpha \circ\partial_\beta$ is written as a linear combination of $\{\partial_\alpha:=\partial/\partial{t^\alpha}\}$ with coefficients in $\mathbb{C}[\![t_M]\!]$. Let $A_{\alpha\beta}{}^\gamma \in\mathbb{C}[\![t_M]\!]$ be a formal power series representing the 3-tensor field such that
\[
	\partial_\alpha\circ \partial_\beta := \sum_\rho A_{\alpha\beta}{}^\rho\cdot\partial_\rho.
\]
%
%
%

The primitive middle-dimensional cohomology 
$H^{n-k}_{\pr}(X_{G})$ or  the cohomology of hypersurface complement $H^{n+k-1}(\BP(\CMcal{E})\setminus X_S)$ (assuming that $X_{G}$ is Calabi-Yau) will be our example of an affine manifold $M$; we will construct an explicit algorithm for a formal flat $F$-manifold structure on such $M$.

\subsection{Universal unfolding, the Euler vector field and the spectrum}\label{sec3.4}
%

Suppose from now on that $X_G\subseteq\mathbf{P}_\Sigma$ is a Calabi-Yau quasi-smooth complete intersection of ample hypersurfaces.
Let 
\be
Y=\Spec (\C[\ud q]_0), \quad M=H^{n+k-1}(\BP(\CMcal{E})\setminus X_S)\simeq \C^\mu, \quad X = Y \times M.
\ee
We recall the notion of a universal unfolding; \cite{Saito} and \cite{ST}. When $\cF$ is a sheaf on a topological space, denote by $\cF_x$ the stalk at $x$. We use a coordinate $\ud t =\{t^\a : \a \in I\}$ for $M$ where $I$ is an index set of cardinality $\mu$.
\begin{definition}[universal unfolding]
A function $\tilde S (\ud q, \ud t)$ on $X$ is called a universal unfolding of $S(\ud q)$ if
\begin{itemize}
\item $\tilde S(\ud q, 0)= S(\ud q)$
\item $\mathrm{a}: \cT_{M,0} \to p_* \CMcal{O}_{X,0}/Jac(\tilde S), \quad X \mapsto \mathrm{a}(X)=\hat X(\tilde S)$ is an $\CMcal{O}_{M,0}$-module isomorphism, where $\hat X$ is a lifting of $X$ to $M$ with respect to $p$ and $Jac(\tilde S) =\langle \frac{\partial \tilde S}{\partial q_1}, \cdots, \frac{\partial \tilde S}{\partial q_{r+k}} \rangle$.
\end{itemize}
\end{definition}

Let $\tilde S$ be a universal unfolding of $S$, which is weight homogeneous with $\wt(\tilde S) =1$.
We consider the following commutative diagram:
\[\begin{tikzcd}
	& { X= Y \times M } \\
	{\C \times M} & M
	\arrow["\varphi"', from=1-2, to=2-1]
	\arrow["p", from=1-2, to=2-2]
	\arrow["\pi"', from=2-1, to=2-2]
\end{tikzcd}\]
where $p$ and $\pi$ are projection maps to the second component and $\phi(\ud q, \ud t) = (\tilde S(\ud q, \ud t), \ud t).$

We consider the Euler vector field $E:= \mathrm{a}^{-1}(\tilde S)$ associated to $\tilde S$.
Due to the weight homogeneity of $\tilde S$, we can choose $E$ by
\bea \label{Euler}
E=\sum_{\a} \wt(t^\a) t^\a \partial_{t^\a}.
\eea

\begin{definition}
Let $(M,\circ, e, E, g)$ be a Frobenius manifold.
Assume that $X \mapsto \nabla^g_X E$ is semi-simple. Then the eigenvalues of this semi-simple map is called the spectrum of $(M,\circ, e, E, g)$.
\end{definition}

%

\subsection{The twisted Gauss-Manin connection on a quantization dGBV algebra}
\label{sec3.5}
We can choose $\Gamma\in\mathcal{A}_0^0[\![\underline{t}]\!]=A_0[\![\underline{t}]\!]$ such that  
\begin{eqnarray*}
\{u_\alpha:=\partial_{t^\alpha}\Gamma|_{\underline{t}=0} : \alpha\in I\} 
\end{eqnarray*}
is a $\mathbb{C}$-basis of $\mathcal{A}_0^0/Q_S(\mathcal{A}_0^{-1})$ and $\tilde S:=S+\Gamma$ is a universal unfolding of $S$. 
We consider the following quantization dGBV algebra:
\[
	\big(\mathcal{A}_0[\![\underline{t}]\!](\!(\hbar)\!),\cdot,Q_{S+\Gamma}, \hbar\Delta+Q_{S+\G},\ell_2^{\Delta}(\cdot,\cdot)\big)
\]
with a formal parameter $\hbar$. 
Let 
\[
		\mathcal{H}_{S+\Gamma} :=\frac{\mathcal{A}_0^0[\![\underline{t}]\!](\!(\hbar)\!)}{(Q_{S+\Gamma}+\hbar\Delta)(\mathcal{A}_0^{-1}[\![\underline{t}]\!](\!(\hbar)\!))},
		\quad\mathcal{H}_{S+\Gamma}^{(m)}:=\frac{\mathcal{A}_0^0[\![\underline{t}]\!][\![\hbar]\!]\hbar^{-m}}{(Q_{S+\Gamma}+\hbar\Delta)(\mathcal{A}_0^{-1}[\![\underline{t}]\!][\![\hbar]\!]\hbar^{-m})}
\]
for $m\in\mathbb{Z}$. 
Then this forms a filtration $\mathcal{H}_{S+\Gamma}^{(m-1)} \subset \mathcal{H}_{S+\Gamma}^{(m)}$ and we have an exact sequence of $O_{M,0}$-modules:
\[
	\begin{tikzcd}
		0 \arrow[r] & \cH_{S+\G}^{(-1)} \arrow[r] &\cH_{S+\G}^{(0)}\arrow[r,"r^{(0)}"] &\mathcal{A}_{0}^0[\![\underline{t}]\!]/Q_{S+\G}(\mathcal{A}_{0}^{-1}[\![\underline{t}]\!])\arrow[r] &0
	\end{tikzcd}
\]
where $r^{(0)}$ is the natural projection map.

\begin{definition}\label{GMC}
	We define \textit{the twisted Gauss-Manin connections} on $\mathcal{H}_{S+\Gamma}$. We use the notation $[\cdot]$ to denote the cohomology class.
	\begin{enumerate}[(i)]
		\item For each $\alpha\in I$, we define a connection $\downtriangle_\alpha^{\frac{S+\Gamma}{\hbar}}:\mathcal{H}_{S+\Gamma}\to \mathcal{H}_{S+\Gamma}$ by
		\begin{align*}
			\downtriangle_\alpha^{\frac{S+\Gamma}{\hbar}}([w])=\downtriangle_{\partial_{t^\a}}^{\frac{S+\Gamma}{\hbar}}([w]):=&\left[e^{\tfrac{-(S+\Gamma)}{\hbar}}\frac{\partial}{\partial t^\alpha}\left(e^{\tfrac{S+\Gamma}{\hbar}}w\right)\right]
			=\left[\frac{\partial}{\partial t^\alpha}w+\frac{1}{\hbar}\frac{\partial(S+\Gamma)}{\partial t^\alpha}w\right]
		\end{align*}
		for $w\in\mathcal{A}_0^0 [\![\underline{t}]\!](\!(\hbar)\!)$.
		\item We also define a connection $\downtriangle_{\hbar^{-1}}^{\frac{S+\Gamma}{\hbar }}:\mathcal{H}_{S+\Gamma}\to \mathcal{H}_{S+\Gamma}$ by
		\begin{align*}
			\downtriangle_{\hbar^{-1}}^{\frac{S+\Gamma}{\hbar}}([w])=\downtriangle_{\partial_{\hbar^{-1}}}^{\frac{S+\Gamma}{\hbar}}([w]):=&\left[e^{\tfrac{-(S+\Gamma)}{\hbar}}\frac{\partial}{\partial \hbar^{-1}}\left(e^{\tfrac{S+\Gamma}{\hbar}}w\right)\right]
			=\left[\frac{\partial}{\partial \hbar^{-1}}w+(S+\Gamma)w\right]
		\end{align*}
		for $w\in\mathcal{A}_0^0 [\![\underline{t}]\!](\!(\hbar)\!)$.
	\end{enumerate}
\end{definition}
These connections are well-defined on cohomology classes, since $Q_{S+\Gamma}+\hbar\Delta$ commutes with $\downtriangle_\alpha^{\frac{S+\Gamma}{\hbar}}$ and $\downtriangle_{\hbar^{-1}}^{\frac{S+\Gamma}{\hbar}}$. Note that 
\[
	\downtriangle_\alpha^{\frac{S+\Gamma}{\hbar}}\left(\mathcal{H}_{S+\Gamma}^{(m)}\right)\subseteq\mathcal{H}_{S+\Gamma}^{(m+1)}\quad\textrm{and}\quad\downtriangle_{\hbar^{-1}}^{\frac{S+\Gamma}{\hbar}}\left(\mathcal{H}_{S+\Gamma}^{(m)}\right)\subseteq\mathcal{H}_{S+\Gamma}^{(m)},\quad m\in\mathbb{Z}.
\]
Thus, $\hbar\downtriangle_\alpha^{\frac{S+\Gamma}{\hbar}}$ preserves the $\hbar$-filtration.
Since $\partial_{\alpha}\Gamma=\partial_{t^\alpha}\Gamma=\hbar\downtriangle_\alpha^{\frac{S+\Gamma}{\hbar}}1$ is a $\mathbb{C}[\![\underline{t}]\!][\![\hbar]\!]$-basis of $\mathcal{H}_{S+\Gamma}^{(0)}$, 
there is a connection matrix $\mathbf{A}_{\alpha\beta}{}^\rho\in\mathbb{C}[\![\underline{t}]\!][\![\hbar]\!]$ with respect to the basis $\{\hbar\downtriangle_\rho^{\frac{S+\Gamma}{\hbar}}1:\rho\in I\}$ such that
\[
	\hbar\downtriangle_\beta^{\frac{S+\Gamma}{\hbar}}(\hbar\downtriangle_\alpha^{\frac{S+\Gamma}{\hbar}}1)=\sum_{\rho\in I}\mathbf{A}_{\alpha\beta}{}^\rho\cdot(\hbar\downtriangle_\rho^{\frac{S+\Gamma}{\hbar}}1)+(Q_{S+\Gamma}+\hbar\Delta)(\mathbf{\Lambda}_{\alpha\beta})
\]
for some $\mathbf{\Lambda}_{\alpha\beta}\in\mathcal{A}_{0}^{-1}[\![\underline{t}]\!][\![\hbar]\!]$ and all $\alpha,\beta\in I$. Suppose that there is $\Gamma$ which makes $\mathbf{A}_{\alpha\beta}{}^\rho$ and $\mathbf{\Lambda}_{\alpha\beta}$ have no $\hbar$-power terms, i.e. there is a connection matrix $A_{\alpha\beta}{}^\rho\in\mathbb{C}[\![\underline{t}]\!]$ such that
\begin{equation}\label{F-QM}
	\hbar\downtriangle_\beta^{\frac{S+\Gamma}{\hbar}}(\hbar\downtriangle_\alpha^{\frac{S+\Gamma}{\hbar}}1)=\sum_{\rho\in I}A_{\alpha\beta}{}^\rho\cdot(\hbar\downtriangle_\rho^{\frac{S+\Gamma}{\hbar}}1)+(Q_{S+\Gamma}+\hbar\Delta)(\Lambda_{\alpha\beta}),
\end{equation}
for some $\Lambda_{\alpha\beta}\in\mathcal{A}_{0}^{-1}[\![\underline{t}]\!]$ and all $\alpha,\beta\in I$. Then, by comparing $\hbar$-power terms, the equation \eqref{F-QM} is equivalent to the following:
\begin{equation}\label{F-QM2}
	\begin{aligned}
		\partial_\alpha\Gamma\cdot\partial_\beta\Gamma&=\sum_{\rho\in I}A_{\alpha\beta}{}^\rho\cdot\partial_\rho\Gamma+Q_{S+\Gamma}(\Lambda_{\alpha\beta}),\\
		\partial_\beta\partial_\alpha\Gamma&=\Delta(\Lambda_{\alpha\beta}),
	\end{aligned}
\end{equation}
for all $\alpha,\beta\in I$. We will use the notations $\G_\a = \partial_\a (\G)$ and $\G_{\a\b}=\partial_\b \partial_\a (\G)$.

\subsection{The first structure connections and a weak primitive form}\label{sec3.6}
We recall the notion of the first structure connections in \cite[Definition 9.6]{Her02}. Consider the pullback diagram
\[\begin{tikzcd}
	{\pi^*TM} & TM \\
	{\C\times M} & M.
	\arrow[from=1-1, to=2-1]
	\arrow[from=1-1, to=1-2]
	\arrow[from=1-2, to=2-2]
	\arrow["\pi", from=2-1, to=2-2]
\end{tikzcd}\]

\begin{definition}[first structure connections]\label{fsc}
Let $(M,\circ, e, \nabla, g, E)$ be a Frobenius manifold satisfying $\Lie_E \circ = \circ$ and $\Lie_E g = D\cdot g$.
For a given $s \in \C$, define the first structure connection on $\pi^*(TM)|_{\C^* \times M}$ by the following formulas
\be
\hat\nabla_X Y&:=& \nabla_X Y + \hbar^{-1} X \circ Y, \\
\hat\nabla_{\partial_{\hbar^{-1}}}^{(s,D)} Y&:=& \nabla_{\partial_{\hbar^{-1}}} Y + E \circ Y+\hbar \left( \nabla_Y E +\left(s+\frac{1}{2}-\frac{D}{2}\right)Y\right)
\ee
for vector fields $X,Y$ on $M$ and a vector field $\partial_{\hbar^{-1}}$ on $\C^*$.
\end{definition}
Let 
\be
J_{S+\G}:=\cA^0_0[[\ud t ]]/Q_{S+\G}(\cA^{-1}_0[[\ud t]]).
\ee
\begin{definition}\label{wpf}
We call $\xi^{(0)}\in \cH_{S+\G}^{(0)}$ a weak primitive form of type $(s, D)$ if the map
\be
\hat \cP_{\xi^{(0)}}: \cT_{M,0} \to \cH_{S+\G}^{(0)}, \quad X \mapsto  \nabla_X^{\frac{{S+\G}}{\hbar}}  \hbar (\xi^{(0)})
\ee
intertwines the twisted Gauss-Manin connection and the first structure connections in Definition \ref{fsc} and $r^{(0)} \circ \hat \cP_{\xi^{(0)}}:\cT_{M,0} \to J_{S+\G}$ is an $\CMcal{O}_{M,0}$-module isomorphism:
\bea\label{feq}
\hat\cP_{\xi^{(0)}}( \hbar \hat\nabla_{X} Y ) = \hbar\nabla_{X}^{\frac{{S+\G}}{\hbar}}  (\hat\cP_{\xi^{(0)}}(Y))
\eea
\bea\label{seq}
\hat\cP_{\xi^{(0)}}( \hat\nabla_{\partial_{\hbar^{-1}}}^{(s,D)}  X ) =  \nabla_{\partial_{\hbar^{-1}}}^{\frac{{S+\G}}{\hbar}} (\hat\cP_{\xi^{(0)}}(X))
\eea
for vector fields $X,Y$ on $M$.
\end{definition}
One can similarly define a formal weak primitive form. Theorem \ref{app} follows from its more precise version below.

\begin{theorem} \label{concreteversion}
Let $X_G\subseteq\mathbf{P}_\Sigma$ be a Calabi-Yau quasi-smooth complete intersection of ample hypersurfaces. 
Then there is a concrete algorithm to find a universal unfolding $S+\G$ of $S$ such that $[1] \in \cH_{S+\G}^{(0)}$ is a formal weak primitive form of type
$(-\frac{n+k+1}{2}, 2-n+k)$.
\end{theorem}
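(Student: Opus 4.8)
The plan is to transport the algorithm of \cite{Park23} to the present situation, the bridge being the dGBV reformulation of Subsection~\ref{sec3.1} together with the comparison isomorphism of Theorem~\ref{TdRCayley-comparison}. Since $X_G$ is Calabi--Yau we have $c_B=0$, so by the discussion following Lemma~\ref{mu-properties} the cochain map $\mu$ of \eqref{bvconversion} and the map $\theta_c\circ\theta_w\circ\rho_S$ identify
\[
A_0/Q_S(\mathcal{A}_0^{-1})\cong A_0/K_S(\mathcal{A}_0^{-1})\cong H^{n+k-1}(\BP(\CMcal{E})\setminus X_S),
\]
a finite-dimensional $\mathbb{C}$-vector space (finiteness from Corollary~\ref{Dwork-finite}). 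First I would fix a $\deg_w$-homogeneous $\mathbb{C}$-basis $\{u_\alpha\}_{\alpha\in I}$ with $u_0=1$ and initialize $\Gamma=\sum_\alpha t^\alpha u_\alpha$, assigning to $t^\alpha$ the weight $\wt(t^\alpha):=1-\deg_w(u_\alpha)$ so that $\tilde S=S+\Gamma$ stays weight-homogeneous of weight $1$. Because every $u_\alpha$ lies in $A_0$ and $\deg_c S=0$, the construction remains inside the charge-zero part $\mathcal{A}_0[\![\underline{t}]\!](\!(\hbar)\!)$, so the extra $\Cl(\BP_\Sigma)$-grading (the source of difficulty in Section~\ref{sec2}) plays no further role; moreover $\{\partial_{t^\alpha}\tilde S|_{\underline{t}=0}\}=\{u_\alpha\}$ being a basis of $A_0/(\mathrm{Jac}(S)\cap A_0)$ makes $\tilde S$ a universal unfolding, and this weight assignment is exactly what produces the Euler field \eqref{Euler}.

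The heart of the algorithm is the inductive, order-by-order solution in $\underline{t}$ of the pair \eqref{F-QM2}, namely $\partial_\alpha\Gamma\cdot\partial_\beta\Gamma=\sum_\rho A_{\alpha\beta}{}^\rho\,\partial_\rho\Gamma+Q_{S+\Gamma}(\Lambda_{\alpha\beta})$ together with $\partial_\beta\partial_\alpha\Gamma=\Delta(\Lambda_{\alpha\beta})$, for the unknowns $\Gamma$, the structure constants $A_{\alpha\beta}{}^\rho\in\mathbb{C}[\![\underline{t}]\!]$, and the homotopies $\Lambda_{\alpha\beta}\in\mathcal{A}_0^{-1}[\![\underline{t}]\!]$. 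Having solved these through total $\underline{t}$-degree $d-1$, at degree $d$ one uses the second equation to solve for $\Lambda_{\alpha\beta}$ while correcting $\Gamma^{(d)}$ so that $\partial_\beta\partial_\alpha\Gamma$ lands in $\im\Delta$, after which the first equation reads off $A_{\alpha\beta}{}^\rho$ modulo $\im Q_{S+\Gamma}$. The input that makes this run is the assertion that the connection matrix may be chosen free of positive $\hbar$-powers, i.e. that \eqref{F-QM} (equivalently \eqref{F-QM2}) is solvable; this rests on the freeness of $\cH_{S+\Gamma}^{(0)}$ as a $\mathbb{C}[\![\underline{t}]\!][\![\hbar]\!]$-module with basis $\{\hbar\downtriangle_\rho^{\frac{S+\Gamma}{\hbar}}1\}_{\rho\in I}$, the degeneration statement already recorded in the exact sequence preceding Definition~\ref{GMC}, and is supplied by the machinery of \cite{Park23} once finite-dimensionality (Corollary~\ref{Dwork-finite}) is available.

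Granting the construction, I would verify the two intertwining identities of Definition~\ref{wpf} for $\xi^{(0)}=[1]$. Identity \eqref{feq} is a restatement of \eqref{F-QM}: one computes $\hat\cP_{[1]}(\partial_{t^\alpha})=\hbar\downtriangle_\alpha^{\frac{S+\Gamma}{\hbar}}[1]=[\partial_\alpha\Gamma]$, and applying $\hbar\downtriangle_\beta^{\frac{S+\Gamma}{\hbar}}$ and reading off the $\hbar$-free part recovers $\partial_\alpha\circ\partial_\beta=\sum_\rho A_{\alpha\beta}{}^\rho\partial_\rho$, so $\hat\cP_{[1]}$ intertwines $\hat\nabla$ of Definition~\ref{fsc} with the twisted Gauss--Manin connection. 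That $r^{(0)}\circ\hat\cP_{[1]}$ is an $\CMcal{O}_{M,0}$-isomorphism onto $J_{S+\Gamma}$ follows since it sends $\partial_{t^\alpha}\mapsto[u_\alpha]$ at $\underline{t}=0$, a basis, whence Nakayama applies. The type is extracted from \eqref{seq}: using $\downtriangle_{\hbar^{-1}}^{\frac{S+\Gamma}{\hbar}}[1]=[\tilde S]$ and the Euler field \eqref{Euler}, comparing the grading (weight) operator appearing on the two sides fixes the scalar $s+\tfrac12-\tfrac{D}{2}$ in terms of $\deg_w$ of the generating top form $dq_1\wedge\cdots\wedge dq_{r+k}$; the arithmetic (with $\deg_w x_i=0$, $\deg_w y_j=1$, against cohomological degree $n+k-1$) then yields $D=2-n+k$ and $s=-\tfrac{n+k+1}{2}$.

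The main obstacle I anticipate is exactly this $\hbar$-freeness/degeneration step: producing a $\Gamma$ for which the connection matrix $\mathbf{A}_{\alpha\beta}{}^\rho$ and homotopies $\mathbf{\Lambda}_{\alpha\beta}$ become $\hbar$-polynomial (hence $\hbar$-free after the normalization), equivalently that $\cH_{S+\Gamma}^{(0)}$ is $\mathbb{C}[\![\hbar]\!]$-free of the expected rank. Everything else is formal manipulation inside the dGBV algebra and grading bookkeeping; it is this degeneration---the BV analogue of the Hodge-to-de Rham ($\partial\bar\partial$) degeneration, established in \cite{Park23}---on which the whole recursion, and thus the existence of the algorithm, ultimately rests.
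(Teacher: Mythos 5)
Your proposal follows essentially the same route as the paper: the paper's proof of Theorem \ref{concreteversion} is exactly the order-by-order algorithm of Section \ref{sec3.7} solving \eqref{F-QM2} (with each new coefficient $u_{\underline{\alpha}}$ \emph{defined} as $\Delta(\lambda_{\underline{\alpha}}^{(\ell-1)})$, so that the second equation of \eqref{F-QM2} holds by construction), combined with Proposition \ref{RHBsol}, which derives \eqref{seq} and the type $(-\tfrac{n+k+1}{2},\,2-n+k)$ from the weight homogeneity of $S+\Gamma$. The one point worth adjusting in your account is that the solvability of each inductive step in \eqref{ind_u} is automatic because $\{u_\rho\}$ is a basis of $A_0/Q_S(\mathcal{A}_0^{-1})$, so the ``$\hbar$-degeneration'' you single out is not a separate obstacle; the genuine toric subtlety the paper flags is instead the possible failure of the weak $Q_S\Delta$-lemma (since \eqref{minusone} need not hold here), which affects only the canonicity of the choices $\lambda_{\underline{\alpha}}^{(i)}$, not the existence of the algorithm.
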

\begin{proof}
It follows from Section \ref{sec3.7} and Section \ref{sec3.8}.
\end{proof}

\begin{corollary}\cite[Proposition 3.5]{Park23}
There is an explicit algorithm to construct formal flat $F$-manifolds $(M, \circ, e, \nabla, E)$ on $M=H^{n+k-1}(\BP(\CMcal{E})\setminus X_S)$ with spectrum\footnote{We obtain $\nabla_{\partial_\a} E= wt(t^\a) \partial_\a$ by using \eqref{Euler} and a $\nabla$-flat coordinate $t^\a$.} $\{\wt(t^\a) : \a \in I\}$.
\end{corollary}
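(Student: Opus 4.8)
The plan is to place ourselves in exactly the algebraic situation treated in \cite{Park23} and then run its recursive algorithm, the point being that Section \ref{sec2} has supplied all the cohomological input that was available in the smooth projective case. First I would record the consequences of the Calabi-Yau hypothesis $c_B=0$: by Proposition \ref{TdR-qsm} together with the map $\mu$ of Lemma \ref{mu-properties}, the space $\cA_0^0/Q_S(\cA_0^{-1})$ is isomorphic to $H^{n+k-1}(\BP(\CMcal{E})\setminus X_S)$, hence finite-dimensional by Corollary \ref{Dwork-finite} and concentrated in the single charge $c_B=0$. I then choose a homogeneous $\C$-basis $\{u_\a:\a\in I\}$ of $\cA_0^0/Q_S(\cA_0^{-1})$ and extend it to $\G\in\cA_0^0[\![\underline t]\!]$ with $\G\equiv\sum_\a t^\a u_\a$ modulo $\underline t^2$, so that $\tilde S=S+\G$ is a universal unfolding and $\{\partial_{t^\a}\G|_{\underline t=0}:\a\in I\}$ is the chosen basis of $J_{S+\G}$ at the origin.

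By Definition \ref{wpf} and the analysis of the twisted Gauss-Manin connection in subsection \ref{sec3.5}, the element $[1]\in\cH_{S+\G}^{(0)}$ is a formal weak primitive form precisely when $\G$ can be chosen so that the connection matrix and homotopy of \eqref{F-QM} are $\hbar$-free; equivalently, when one can produce structure constants $A_{\a\b}{}^\rho\in\C[\![\underline t]\!]$ and $\Lambda_{\a\b}\in\cA_0^{-1}[\![\underline t]\!]$ solving the system \eqref{F-QM2}. Thus the theorem reduces to exhibiting an algorithmic solution of \eqref{F-QM2}, after which the type $(s,D)$ is extracted from the Euler field.

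The heart of the argument is the recursive solution of \eqref{F-QM2} by induction on the $\underline t$-adic order. Suppose $\G$, $A_{\a\b}{}^\rho$, and $\Lambda_{\a\b}$ are determined modulo order $N$. The equation $\partial_\b\partial_\a\G=\Delta(\Lambda_{\a\b})$ requires the order-$N$ part of $\partial_\b\partial_\a\G$ to be $\Delta$-exact; one inverts $\Delta$ against a fixed contracting homotopy, splitting $\cA_0$ into $\ker\Delta$ and $\operatorname{im}\Delta$, to produce $\Lambda_{\a\b}$ at this order. The equation $\partial_\a\G\cdot\partial_\b\G=\sum_\rho A_{\a\b}{}^\rho\partial_\rho\G+Q_{S+\G}(\Lambda_{\a\b})$ is then solved by projecting the product $\partial_\a\G\cdot\partial_\b\G$ onto the basis $\{\partial_\rho\G\}$ of $J_{S+\G}$ modulo $Q_{S+\G}$-exact terms, reading off $A_{\a\b}{}^\rho$ and absorbing the remainder into the next order of $\G$. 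This is precisely the algorithm of \cite{Park23}, which I would invoke once the inputs are matched.

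The main obstacle is to show that this induction never stalls, i.e. that at each order the required $\Delta$-exactness holds and the two equations in \eqref{F-QM2} are simultaneously consistent. This is where the concentration of cohomology in the single charge $c_B=0$ (Proposition \ref{TdR-qsm}) and its finite-dimensionality (Corollary \ref{Dwork-finite}) are essential: they guarantee that the obstruction classes vanish in the relevant cohomology, so that the contracting homotopy of \cite{Park23} furnishes the primitives $\Lambda_{\a\b}$ and the freedom in $\G$ absorbs the remaining terms. Since Section \ref{sec2} produces exactly the same cohomological package (finiteness plus concentration) that underlies the smooth projective case, the obstruction-vanishing estimates of \cite{Park23} transfer without change. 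Finally, the type is read off from the gradings: the normalization $\wt(\tilde S)=1$ fixes the Euler field $E$ via \eqref{Euler}, and balancing the $\deg_w$-weight of $\Omega_0$ against $\dim\BP(\CMcal{E})=n+k-1$ yields $D=2-n+k$ and $s=-\tfrac{n+k+1}{2}$, giving a formal weak primitive form of the asserted type.
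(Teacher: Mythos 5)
There is a genuine gap in the inductive step. You solve the second equation of \eqref{F-QM2} by demanding that the order-$N$ part of $\partial_\beta\partial_\alpha\Gamma$ be $\Delta$-exact and then asserting that ``the obstruction-vanishing estimates of \cite{Park23} transfer without change'' because the cohomological package (finiteness plus concentration in charge $c_B$) is the same as in the smooth projective case. That is exactly the point where the toric setting differs. The obstruction-vanishing input in \cite{Park23} is the weak $Q_S\Delta$-lemma (for $f\in\mathcal{A}_{c_B}^{-1}$ with $Q_S(f)=0$ one has $\Delta(f)\in\mathrm{Im}(Q_S\circ\Delta)$), and its proof rests on the isomorphism $H^{-1}(\cA,K_S)\cong H^0(\cA,K_S)$ of \cite{AS}, i.e.\ on \eqref{minusone}. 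The paper states explicitly that this isomorphism is \emph{not} guaranteed for a general simplicial toric ambient variety, because of the extra $\mathrm{Cl}(\BP(\CMcal{E}))$-gradings; so the vanishing of your obstruction classes does not follow from finiteness and charge-concentration alone, and your induction is not justified as written.

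The paper's algorithm in Section \ref{sec3.7} sidesteps this in a way your proposal inverts: rather than fixing $\Gamma$ and checking that $\partial_\beta\partial_\alpha\Gamma$ is $\Delta$-exact, it first produces the homotopies $\lambda_{\underline{\alpha}}^{(i)}$ from the first equation (expanding products in the chosen basis of $\cA_0^0/Q_S(\cA_0^{-1})$ modulo $\mathrm{Im}(Q_S)$) and then \emph{defines} the higher coefficients of $\Gamma$ by $u_{\underline{\alpha}}:=\Delta(\lambda_{\underline{\alpha}}^{(\ell-1)})$, so that the second equation of \eqref{F-QM2} holds by construction. The price, which the paper records in a footnote to Step-$\ell$, is that without the weak $Q_S\Delta$-lemma the resulting structure constants $a_{\underline{\alpha}}{}^\rho$ genuinely depend on the choices of the $\lambda_{\underline{\alpha}}^{(i)}$; this ``flexibility'' is a feature of the toric case, not a uniqueness statement. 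To repair your argument you should either reorganize the induction along these lines, or supply an independent proof of the weak $Q_S\Delta$-lemma in your setting -- the latter is precisely what the paper says it cannot do. Your concluding identification of the type $(-\tfrac{n+k+1}{2},\,2-n+k)$ from weight homogeneity is consistent with Section \ref{sec3.8}, but it too should be routed through Proposition \ref{RHBsol} rather than a weight count on $\Omega_0$.
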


%


\subsection{The first equation of a weak primitive form: an algorithm for formal flat $F$-manifolds}\label{sec3.7}

A system of solutions $(\Gamma, \xi^{(0)})$ to the first equation \eqref{feq} in Definition \ref{wpf} gives rise to a formal flat $F$-manifold structure \cite[Proposition 3.5]{Park23}. We give an algorithm when $[1]\in \cH_{S+\G}^{(0)}$ is a weak primitive form and $\partial_\a =\partial_{t^\a}$ is a flat coordinate with respect to $\nabla$ of a flat $F$-manifold structure. In this case, a simple computation confirms that the system of differential equations \eqref{F-QM2} is equivalent to the first equation \eqref{feq}.

The algorithm will be essentially the same as \cite[section 3]{Park23}, the main difference between the current generalized toric setting and \cite{Park23} (the case of $\BP_\Sigma=\BP^n$) is the grading structure (cohomological grading and $\Cl(\BP(\CMcal{E}))$-grading) of the algebra $\cA$, which leads to the complication of the description of $H^{-1}(\cA, K_S)$; when $\BP_\Sigma=\BP^n$, then \cite[Theorem 1.6, (1.10)]{AS} shows that
\bea \label{minusone}
H^{-1}(\cA, K_S) \cong H^{0}(\cA, K_S) 
\eea
which is crucial to prove so called the weak $Q_S\Delta$-lemma \cite[Lemma 2.8]{Park23}.
The weak $Q_S\Delta$-lemma says that for $f\in\mathcal{A}_{c_B}^{-1}$ such that $Q_S(f)=0$, we have $\Delta(f)=-Q_S(\Delta(v))$ for some $v\in\mathcal{A}_{c_B}^{-2}$.
But in our toric setting, the existence of the isomorphism \eqref{minusone} is not guaranteed.
This phenomenon gives us some flexibility in the algorithm, which we will explain in [Step-$\boldsymbol\ell$ $(\ell\geq2)$] of the algorithm.

In practice, finding explicit $u_\a \in A_0$ using the computer program in the quasi-smooth toric complete intersection case needs more work, since $\cA^0$ has $r-n+1$ gradings $\deg_{c_1}, \cdots, \deg_{c_{r-n}}, \deg_w$; there is only two gradings $\deg_c, \deg_w$ in the case of smooth projective complete intersection.

For reader's convenience, we reproduce an explicit algorithm which calculates $\Gamma, A_{\alpha\beta}{}^\rho$ satisfying the equation \eqref{F-QM} (see \cite{Park23} for a proof why such algorithm holds). We use the following notations:
\begin{equation}\label{Exp_notation}
	\begin{aligned}
		\Gamma&=\sum_{\alpha\in I}u_\alpha\cdot t^\alpha+\sum_{m\geq2}\sum_{\underline{\alpha}\in I^m}\frac{1}{m!}u_{\underline{\alpha}}t^{\underline{\alpha}}\in\mathcal{A}_0^0 [\![\underline{t}]\!],\\
		A_{\alpha\beta}{}^\rho&=a_{\alpha\beta}{}^\rho+\sum_{m\geq1}\sum_{\underline{\alpha}\in I^m}\frac{1}{m!}a_{\alpha\beta\underline{\alpha}}{}^\rho t^{\underline{\alpha}}\in\mathbb{C}[\![\underline{t}]\!],\\
		\Lambda_{\alpha\beta}&=\lambda_{\alpha\beta}+\sum_{m\geq1}\sum_{\underline{\alpha}\in I^m}\frac{1}{m!}\lambda_{\alpha\beta\underline{\alpha}}t^{\underline{\alpha}}\in\mathcal{A}_0^{-1} [\![\underline{t}]\!],
	\end{aligned}
\end{equation}
where $u_{\underline{\alpha}}\in\mathcal{A}_0^0$, $a_{\alpha\beta}{}^\rho,a_{\alpha\beta\underline{\alpha}}{}^\rho\in\mathbb{C}$, $\lambda_{\alpha\beta},\lambda_{\alpha\beta\underline{\alpha}}\in\mathcal{A}_0^{-1}$, and $t^{\alpha_1\cdots\alpha_m}:=t^{\alpha_1}\cdots t^{\alpha_m}$. We are assuming that $u_{\underline{\alpha}},a_{\alpha\beta\underline{\alpha}}{}^\rho,\lambda_{\alpha\beta\underline{\alpha}}$ are invariant under the permutation of indices of $\underline{\alpha}$. In order to make $\{\Gamma_\rho=\partial_\a (\G):\rho\in I\}$ be a $\mathbb{C}[\![\underline{t}]\!][\![\hbar]\!]$-basis of $\mathcal{H}_{S+\Gamma}^{(0)}$, we define the $t^\alpha$-term of $\Gamma$ as $u_\alpha$  where $\{u_\alpha:\alpha\in I\}$ is a $\mathbb{C}$-basis of $\mathcal{A}_0^0/Q_S(\mathcal{A}_0^{-1})$. We have to determine $u_{\underline{\alpha}},a_{\alpha\beta}{}^\rho,a_{\alpha\beta\underline{\alpha}}{}^\rho$ which satisfy the equation \eqref{F-QM}. 
%
\begin{definition}\label{u_alpha}
	Let $|\underline{\alpha}|=m$ which means that $\underline{\alpha}\in I^m$. We define the notation $u_{\underline{\alpha}}^{(i)}$ as follows:
	\[
		u^{(i)}_{\underline{\alpha}}=\sum_{\substack{\underline{\alpha}_1\sqcup\cdots\sqcup \underline{\alpha}_{m-i}=\underline{\alpha}\\\underline{\alpha}_j\neq\emptyset}}\frac{1}{(m-i)!}u_{\underline{\alpha}_1}\cdots u_{\underline{\alpha}_{m-i}},\quad(0\leq i\leq m-1),	
	\]
	where the notation $\underline{\alpha}_1\sqcup\cdots\sqcup \underline{\alpha}_{m-i}=\underline{\alpha}$ means $\underline{\alpha}_1\cup\cdots\cup \underline{\alpha}_{m-i}=\underline{\alpha}$ and $\underline{\alpha}_k\cap \underline{\alpha}_\ell=\emptyset$ for $\ell\neq k$.
\end{definition}
Note that $u_{\underline{\alpha}}^{(i)}$ is invariant under the permutation of indices of $\underline{\alpha}$. For example,
\[
	\begin{aligned}
		u_{\alpha\beta\gamma}^{(0)}&=u_\alpha u_\beta u_\gamma,\\
		u_{\alpha\beta\gamma}^{(1)}&=u_\alpha u_{\beta\gamma}+ u_{\beta} u_{\alpha\gamma}+ u_\gamma u_{\alpha\beta},\\
		u_{\alpha\beta\gamma}^{(2)}&=u_{\alpha\beta\gamma},
	\end{aligned}
	\quad\textrm{and}\quad
	\begin{aligned}
		u_{\alpha\beta\gamma\delta}^{(0)}=&u_\alpha u_\beta u_\gamma u_\delta,\\
		u_{\alpha\beta\gamma\delta}^{(1)}=&u_{\alpha\beta}u_{\gamma}u_{\delta}+u_{\alpha\gamma}u_{\beta}u_{\delta}+u_{\alpha\delta}u_{\beta}u_{\gamma}\\
		&+u_{\beta\gamma}u_{\alpha}u_{\delta}+u_{\beta\delta}u_{\alpha}u_{\gamma}+u_{\gamma\delta}u_{\alpha}u_{\beta},\\
		u_{\alpha\beta\gamma\delta}^{(2)}=&u_{\alpha\beta} u_{\gamma\delta}+ u_{\alpha\gamma} u_{\beta\delta}+ u_{\alpha\delta} u_{\beta\gamma}\\
		&+u_\alpha u_{\beta\gamma\delta}+ u_{\beta} u_{\alpha\gamma\delta}+ u_\gamma u_{\alpha\beta\delta}+ u_{\delta} u_{\alpha\beta\gamma},\\
		u_{\alpha\beta\gamma\delta}^{(3)}=&u_{\alpha\beta\gamma\delta}.
	\end{aligned}
\]\par
Now we give the algorithm for $\Gamma$ and $A_{\alpha\beta}{}^\rho$ as follows:
\begin{description}
	\item[Step-0] Choose a $\mathbb{C}$-basis $\{u_\alpha:\alpha\in I\}$ of $\mathcal{A}_0^0/Q_S(\mathcal{A}_0^{-1})$. 
	There is a partition of $\{u_\alpha:\alpha\in I\}$ in terms of weights\footnote{Since a quasi-smooth complete intersection $X_G$ is a compact $V$-manifold, $\bH=H^{n-k}_{\mathrm{prim}}(X_{G})$ has a pure Hodge structure so that $\bH = \oplus_{p+q=n-k} \bH^{p,q}$. Let $R$ be a map from $\cA_0^0/Q_S(\cA_0^{-1})$ to $\bH$ given by the composition of the map in \eqref{exactprim} and the isomorphism $\cA_0^0/Q_S(\cA_0^{-1}) \simeq H^{n+k-1}(\BP_\Sigma \setminus X_G)$. Then, by \cite[Theorem 3.6]{Mav}, $R(u_\a) \in \bH^{p,q}$ if and only if $\wt(u_\a) =q$.}, i.e. we decompose $\{u_\alpha:\alpha\in I\}=\bigcup_{r=0}^{n-k}\{u_{\alpha}:\alpha\in I_r\}$ where $I=I_0\sqcup\cdots\sqcup I_{n-k}$. 
	\item[Step-1] Determine $a_{\alpha\beta}^{(0)}$ and $\lambda_{\alpha\beta}^{(0)}$ using a basis $\{u_\alpha:\alpha\in I\}$ of $J_S$ as follows:
	\[
		u_\alpha u_\beta=\sum_{\rho\in I}a_{\alpha\beta}^{(0)}{}^\rho u_\rho+Q_S({\lambda_{\alpha\beta}^{(0)}}).
	\]
	Note that $a_{\alpha\beta}^{(0)}{}^\rho\in\mathbb{C}$ is unique and $\lambda_{\alpha\beta}^{(0)}$ is unique up to $\ker Q_S$.  
	Then, define
	$u_{\alpha\beta}:=\Delta(\lambda_{\alpha\beta}^{(0)})$ and $a_{\alpha\beta}{}^\rho:=a_{\alpha\beta}^{(0)}{}^\rho$. 
	\item[Step-$\boldsymbol\ell$ $(\ell\geq2)$] Suppose that $|\underline{\alpha}|=\ell+1$. Determine $a_{\underline{\alpha}}^{(i)}{}^\rho$ and $\lambda_{\underline{\alpha}}^{(i)}$ ($0\leq i\leq \ell-1$) in sequence as follows:
	\begin{equation}\label{ind_u}
			\begin{aligned}
				u_{\underline{\alpha}}^{(0)}&=\sum_{\rho\in I}a_{\underline{\alpha}}^{(0)}{}^\rho u_\rho+Q_S(\lambda_{\underline{\alpha}}^{(0)}),\\
				u_{\underline{\alpha}}^{(1)}-\Delta(\lambda_{\underline{\alpha}}^{(0)})&=\sum_{\rho\in I}a_{\underline{\alpha}}^{(1)}{}^\rho u_\rho+Q_S(\lambda_{\underline{\alpha}}^{(1)}),\\
				&\quad\vdots\\
				u_{\underline{\alpha}}^{(\ell-1)}-\Delta(\lambda_{\underline{\alpha}}^{(\ell-2)})&=\sum_{\rho\in I}a_{\underline{\alpha}}^{(\ell-1)}{}^\rho u_\rho+Q_S(\lambda_{\underline{\alpha}}^{(\ell-1)}).
		\end{aligned}
	\end{equation}
	Then, define $a_{\underline{\alpha}}{}^\rho:=a_{\underline{\alpha}}^{(\ell-1)}{}^\rho$ and\footnote{ If the weak $Q_S\Delta$-lemma holds, then  $\Delta(\lambda_{\underline{\alpha}}^{(i)})$ is unique up to $\mathrm{Im}(Q_S)$, which implies that $a_{\underline{\alpha}}^{(\ell-1)}{}^\rho$ is independent of the choices of $\lambda_{\underline{\alpha}}^{(i)}$. But we do not have the weak $Q_S\Delta$-lemma and so $a_{\underline{\alpha}}^{(\ell-1)}{}^\rho$ depends on the choices of $\lambda_{\underline{\alpha}}^{(i)}$.}
	$u_{\underline{\alpha}}:=\Delta(\lambda_{\underline{\alpha}}^{(\ell-1)})$.
\end{description}
By this inductive algorithm, we can completely determine $\Gamma, A_{\alpha\beta}{}^\rho$, and $\Lambda_{\a\b}$ 


\subsection{The second equation of a weak primitive form: the weight homogeneity of a universal unfolding}
\label{sec3.8}

If a system of solution $(\Gamma, \xi^{(0)})$ to the first equation \eqref{feq} of a weak primitive form is given, then we claim that the weight homogeneity of $S+\G$ implies the second equation \eqref{seq}.
Note that the second equation \eqref{seq} determines the type of the weak primitive form $\xi^{(0)}$.

\begin{proposition}\label{RHBsol}
If $\G$ is a solution to \eqref{F-QM2} satisfying $\wt(\G)=1$ and $\wt(\hbar)=1$, 
then we have
\be
\hat\cP_{[1]}( \hat\nabla_{\partial_{\hbar^{-1}}}^{(-\frac{n+k+1}{2}, 2-n+k)}  \partial_\a ) =  \nabla_{\partial_{\hbar^{-1}}}^{\frac{{S+\G}}{\hbar}} (\hat\cP_{[1]}(\partial_\a)) \quad \text{ in } \cH_{S+\G}^{(0)}.
\ee
\end{proposition}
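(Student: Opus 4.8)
The plan is to compute both sides of the asserted identity explicitly in $\cH_{S+\G}^{(0)}$ and to show that they coincide precisely because of the weight homogeneity $\wt(S+\G)=1$, with the numerology $(s,D)=(-\frac{n+k+1}{2},\,2-n+k)$ forced by the matching of a single scalar. First I would unwind the two connections on the basis fields. Since $S$ is independent of $\underline t$ and $\G_\a=\partial_{t^\a}\G$ carries no $\hbar$, Definition \ref{GMC} gives $\hat\cP_{[1]}(\partial_\a)=\hbar\,\downtriangle_\a^{\frac{S+\G}{\hbar}}1=[\G_\a]$, hence the right-hand side is
$$\nabla_{\partial_{\hbar^{-1}}}^{\frac{S+\G}{\hbar}}\big(\hat\cP_{[1]}(\partial_\a)\big)=[(S+\G)\G_\a].$$
For the left-hand side I would expand $\hat\nabla_{\partial_{\hbar^{-1}}}^{(s,D)}\partial_\a$ via Definition \ref{fsc}, using $\nabla_{\partial_{\hbar^{-1}}}\partial_\a=0$ and $\nabla_{\partial_\a}E=\wt(t^\a)\partial_\a$ (from \eqref{Euler}). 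Applying the $\mathcal{O}_{M,0}[\hbar]$-linear map $\hat\cP_{[1]}$ and using the first equation (which holds by hypothesis, as \eqref{F-QM2} is equivalent to \eqref{feq}) in the form $\hat\cP_{[1]}(\partial_\b\circ\partial_\a)=\hbar\,\downtriangle_\b^{\frac{S+\G}{\hbar}}[\G_\a]=[\hbar\G_{\a\b}+\G_\a\G_\b]$, I obtain, with $E=\sum_\b\wt(t^\b)t^\b\partial_{t^\b}$ acting only on $\underline t$,
$$\text{LHS}=\big[\hbar E(\G_\a)+\G_\a E(\G)+\hbar\big(\wt(t^\a)+s+\tfrac12-\tfrac{D}{2}\big)\G_\a\big].$$

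Next I would bring in the homogeneity. Writing $E_w:=\sum_{j=1}^k y_j\partial_{y_j}$ for the $\deg_w$-Euler operator on the $q$-variables, $\wt(S+\G)=1$ means $(E+E_w)(S+\G)=S+\G$; as $E_w(S)=S$ and $E(S)=0$ this reduces to $E(\G)=\G-E_w(\G)$. Substituting $\G_\a E(\G)=\G_\a\G-\G_\a E_w(\G)$ and using $E_w(S+\G)=S+E_w(\G)$ gives
$$\text{LHS}-\text{RHS}=\big[\hbar E(\G_\a)-\G_\a E_w(S+\G)+\hbar\big(\wt(t^\a)+s+\tfrac12-\tfrac{D}{2}\big)\G_\a\big].$$
The mixed term is handled by a direct computation in the dGBV algebra with the odd contraction $\Theta_w:=\sum_{j=1}^k y_j\eta_j$ (the image of $\theta_w$ under $\mu$): for $\eta$-free $f$ one has $Q_{S+\G}(\Theta_w f)=E_w(S+\G)\,f$ and $\Delta(\Theta_w f)=(k+E_w)f$. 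Since $[(Q_{S+\G}+\hbar\Delta)(\Theta_w\G_\a)]=0$ in $\cH_{S+\G}^{(0)}$, these yield $[\G_\a E_w(S+\G)]=-\hbar\,[\,k\G_\a+E_w(\G_\a)\,]$.

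Finally I would apply $\partial_{t^\a}$ to $(E+E_w)\G=\G$ to obtain the differentiated homogeneity relation $E(\G_\a)+E_w(\G_\a)=(1-\wt(t^\a))\G_\a$, and substitute both facts above to collapse everything to a scalar multiple of $[\G_\a]$:
$$\text{LHS}-\text{RHS}=\hbar\big(1+k+s+\tfrac12-\tfrac{D}{2}\big)[\G_\a].$$
With $D=2-n+k$ we have $\tfrac{D}{2}=1-\tfrac n2+\tfrac k2$, so the scalar equals $s+\tfrac{n+k+1}{2}$, which vanishes exactly when $s=-\frac{n+k+1}{2}$; this is the asserted type and the identity follows. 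The main obstacle is the careful verification of the two dGBV identities for $\{Q_{S+\G},\Theta_w\}$ and $\{\Delta,\Theta_w\}$ on $\eta$-free elements — these are the origin of the decisive constant $k$ and must be checked with the correct Koszul signs — together with confirming that the homogeneity relation and its $\partial_{t^\a}$-derivative, as well as the containment $\Theta_w\G_\a\in\mathcal{A}_0^{-1}[\![\underline t]\!][\![\hbar]\!]$, are all valid in the $\deg_c=0$, $\deg_w$-graded setting where $c_B=0$.
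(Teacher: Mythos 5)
Your proposal is correct and follows essentially the same route as the paper's proof: both reduce the identity to the flatness equations \eqref{F-QM2} (to handle $\hat\cP_{[1]}(E\circ\partial_\a)$), the contraction $\sum_j y_j\eta_j$ against $Q_{S+\G}+\hbar\Delta$ (which produces the decisive constant $-k$ as a coboundary), and the weight homogeneity of $(S+\G)/\hbar$; the only cosmetic difference is that you evaluate $\nabla^{\frac{S+\G}{\hbar}}_{\partial_{\hbar^{-1}}}\G_\a$ directly as $[(S+\G)\G_\a]$ whereas the paper passes through $-\hbar^2\nabla_\hbar$ and the Euler-operator identity. All scalar bookkeeping (the collapse to $s+\tfrac{n+k+1}{2}$) checks out.
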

\begin{proof}

We compute the left-hand side
\be
\hat\cP_{[1]}( \hat\nabla_{\partial_{\hbar^{-1}}}^{(-\frac{n+k+1}{2}, 2-n+k)}  \partial_\a )
&=&\hat\cP_{[1]} \left( \nabla_{\partial_{\hbar^{-1}}} \partial_\a+ E \circ \partial_\a+\hbar \left( \nabla_{\partial_\a} E +\left(-\frac{n+k+1}{2}+\frac{1}{2}-\frac{2-n+k}{2}\right)\partial_\a\right) \right) \\
&=& \hat\cP_{[1]} \biggl( E \circ \partial_\a-\hbar  \left(\wt(\G_\a)+k\right)\partial_\a \biggr)\\
&=& \hbar\nabla_{E\circ \partial_\a}^{\frac{S+\G}{\hbar}}(1) - \hbar \left(\wt(\G_\a)+k\right)\hbar\nabla_{E\circ \partial_\a}^{\frac{S+\G}{\hbar}}(1) 
=\hbar\nabla_{E\circ \partial_\a}^{\frac{S+\G}{\hbar}}(1) - \hbar \left(\wt(\G_\a)+k\right) \G_\a
\ee
where we use the fact that $\nabla_{\partial_\a} E= wt(t^\a) \partial_\a$ for a $\nabla$-flat coordinate $t^\a$.
On the other hand, we have
\be
\nabla_{\partial_{\hbar^{-1}}}^{\frac{{S+\G}}{\hbar}} (\hat\cP_{[1]}(\partial_\a))
=\nabla_{\hbar^{-1}}^{\frac{S+\G}{\hbar}} \hbar \nabla_{\partial_\a}^{\frac{S+\G}{\hbar}} (1) = \nabla_{\hbar^{-1}}^{\frac{S+\G}{\hbar}}\G_\a.
\ee

Let $E_{\wt}:=\sum_{a=1}^{r+k} \wt(q_a)q_a \frac{\partial}{\partial q_a}=\sum_{i=1}^k y_i \frac{\partial}{\partial y_i}$.
We observe
\be
&&\hbar(\hbar\nabla^{\frac{S+\G}{\hbar}}_\hbar + \nabla^{\frac{S+\G}{\hbar}}_E+\nabla^{\frac{S+\G}{\hbar}}_{E_{\wt}} )  \G_\a  \\
&=& \hbar (\hbar \frac{\partial}{\partial \hbar} + E + E_{\wt}) \G_\a + \hbar \left((\hbar \frac{\partial}{\partial \hbar} + E + E_{\wt})(\frac{S+\G}{\hbar})\right) \cdot  \G_\a  \\
&=& \hbar \cdot \wt(\G_\a) \G_\a + \hbar \cdot \wt\left(\frac{S+\G}{\hbar}\right) \cdot \frac{S+\G}{\hbar}\cdot \G_a =\wt(\G_{\a}) \hbar\G_\a,
\ee
where we use $\wt(\Gamma_\a)=1-\wt(t^\a)=\wt(u_\a)$ and $\wt\left(\frac{S+\G}{\hbar}\right)=0$.
Thus
\be
\nabla_{\partial_{\hbar^{-1}}}^{\frac{{S+\G}}{\hbar}} (\hat\cP_{[1]}(\partial_\a)) = \nabla_{\hbar^{-1}}^{\frac{S+\G}{\hbar}}\G_\a&=& -\hbar^2 \nabla^{\frac{S+\G}{\hbar}}_\hbar \G_\a \\
&=&-\hbar\cdot \wt(\G_{\a}) \G_\a+ \hbar \nabla^{\frac{S+\G}{\hbar}}_E \G_\a+ \hbar \nabla^{\frac{S+\G}{\hbar}}_{E_{\wt}} \G_\a.
\ee
Moreover, direct computations show that
\be
&& \hbar \nabla^{\frac{S+\G}{\hbar}}_{E_{\wt}}\G_\a 
=-\hbar k \cdot \G_\a + (Q_{S+\G}+ \hbar \Delta)\left(\G_\a \cdot \sum_{a=1}^{r+k} \wt(q_a)q_a\eta_a\right) 
\ee
and 
\be
 \hbar  \nabla^{\frac{S+\G}{\hbar}}_E\G_\a 
&=& \sum_\b \wt(t^\b)t^\b \left(  \hbar\G_\b \G_\a + \G_{\a\b}\right) \\
&=&\sum_\g \left(\sum_\b \wt(t^\b)t^\b A_{\a\b}{}^{\g}\right)\G_\g +(Q_{S+\G}+\hbar\Delta)\left(\sum_\b \wt(t^\b)t^\b \Lambda_{\a\b}\right)\\
&=&\hbar\nabla_{E\circ \partial_\a}^{\frac{S+\G}{\hbar}}(1) + (Q_{S+\G}+\hbar\Delta)\left(\sum_\b \wt(t^\b)t^\b \Lambda_{\a\b}\right)
\ee
where we use that fact that $\G$ is a solution to \eqref{F-QM2}. This finishes the proof.
%
\end{proof}

\appendix

\section{Generalities on the cohomology of normal varieties}\label{algdRnormal}
Given a normal $\mathbb{C}$-variety $X$ and a coherent $\CMcal{O}_X$-module $\CMcal{F}$, denote
\begin{align*}
\widehat{\CMcal{F}}:=\underline{\mathrm{Mod}}_{\CMcal{O}_X}\left(\underline{\mathrm{Mod}}_{\CMcal{O}_X}(\CMcal{F},\CMcal{O}_X),\CMcal{O}_X\right)
\end{align*}
called the \emph{reflexive hull} of $\CMcal{F}$. Namely, we write
\begin{align*}
\widehat{\Omega}_{X/\mathbb{C}}^\bullet:=\underline{\mathrm{Mod}}_{\CMcal{O}_X}\left(\underline{\mathrm{Mod}}_{\CMcal{O}_X}(\Omega_{X/\mathbb{C}}^\bullet,\CMcal{O}_X),\CMcal{O}_X\right)
\end{align*}
\begin{remark}\label{refhull-coherence} Since $X$ is Noetherian, $\CMcal{O}_X$ is coherent over $\CMcal{O}_X$ so $\underline{\mathrm{Mod}}_{\CMcal{O}_X}(-,\CMcal{O}_X)$ preserves coherence.
\end{remark}
\begin{proposition}\label{reflsheaves} Let $X$ be a normal integral scheme and $\CMcal{F}$ a coherent $\CMcal{O}_X$-module.
\begin{quote}
(1) $\CMcal{F}^\vee:=\underline{\mathrm{Mod}}_{\CMcal{O}_X}(\CMcal{F},\CMcal{O}_X)$ is reflexive, i.e. $(\CMcal{F}^\vee)^{\vee\vee}\cong\CMcal{F}^\vee$.\\
(2) $\CMcal{F}$ is reflexive if and only if $\CMcal{F}$ is torsion free and for each open subset $U\subseteq X$ and each closed subset $Y\subseteq U$ of codimension $\geq2$, ${\CMcal{F}|_U\cong j_\ast\CMcal{F}|_{U\setminus Y}}$ where $j:U\setminus Y\hookrightarrow U$ is an inclusion map.\\
(3) Let $j:U\hookrightarrow X$ be an open subset such that $X\setminus U$ is of codimension $\geq2$. If $\CMcal{F}|_U$ is locally free, then $\widehat{\CMcal{F}}\cong j_\ast\CMcal{F}|_U$.
\end{quote}
\end{proposition}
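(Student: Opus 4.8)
The plan is to reduce every assertion to the affine-local situation and then to extract reflexivity from the interplay between the categorical double-dual unit and Serre's condition $(S_2)$, which is available precisely because a normal integral scheme satisfies $(R_1)+(S_2)$. For part (1), I would first record that for any coherent $\CMcal{F}$ the evaluation map $c_{\CMcal{F}}\colon\CMcal{F}\to\CMcal{F}^{\vee\vee}$ is natural; applying $(-)^\vee=\underline{\mathrm{Mod}}_{\CMcal{O}_X}(-,\CMcal{O}_X)$ produces $(c_{\CMcal{F}})^\vee\colon\CMcal{F}^{\vee\vee\vee}\to\CMcal{F}^\vee$, and the triangle identity of the contravariantly self-adjoint functor $(-)^\vee$ gives $(c_{\CMcal{F}})^\vee\circ c_{\CMcal{F}^\vee}=\id_{\CMcal{F}^\vee}$. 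This exhibits $c_{\CMcal{F}^\vee}$ as a split monomorphism, so only surjectivity remains. For that I would pass to a stalk $\Spec R$ with $R$ a normal Noetherian local domain and $M$ a finite module, present $M$ by a right-exact sequence of free modules, and dualize to realize $M^\vee=\ker(R^b\to R^a)$ as a second syzygy; such a module is torsion-free and satisfies $(S_2)$, and over a normal domain ``torsion-free $+\,(S_2)$'' is equivalent to reflexivity. I would cite the standard commutative-algebra input (Hartshorne's treatment of stable reflexive sheaves, or the Stacks project) rather than reproduce this last equivalence.

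For part (2), the forward direction uses that a reflexive sheaf is by definition a dual, $\CMcal{F}\cong\CMcal{F}^{\vee\vee}=(\CMcal{F}^\vee)^\vee$; duals are torsion-free because $\CMcal{O}_X$ is. For the extension property I would present $\CMcal{F}^\vee$ locally as $\ker(\CMcal{O}_X^b\to\CMcal{O}_X^a)$, note that $j_*$ is left exact, and invoke algebraic Hartogs for the normal scheme $X$ (regular functions extend across closed subsets of codimension $\geq 2$) to get $j_*(\CMcal{O}_{U\setminus Y}^{\oplus m})\cong\CMcal{O}_U^{\oplus m}$; left exactness then transports this to $\CMcal{F}|_U\cong j_*\CMcal{F}|_{U\setminus Y}$. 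For the converse I would use $c_{\CMcal{F}}\colon\CMcal{F}\to\CMcal{F}^{\vee\vee}$ with the target already reflexive by part (1). A torsion-free coherent sheaf on a normal scheme is locally free at every codimension-$1$ point (the local rings there are discrete valuation rings), so $c_{\CMcal{F}}$ restricts to an isomorphism over an open $U_0$ with $\codim(X\setminus U_0)\geq 2$. Applying the extension hypothesis to $\CMcal{F}$ and the already-established one to the reflexive $\CMcal{F}^{\vee\vee}$, both sheaves are recovered as $j_*$ of their common restriction to $U_0$, so $c_{\CMcal{F}}$ is a global isomorphism and $\CMcal{F}$ is reflexive.

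Part (3) is then formal: $\widehat{\CMcal{F}}=\CMcal{F}^{\vee\vee}$ is reflexive by part (1), so part (2) applied to the codimension-$\geq 2$ subset $X\setminus U$ gives $\widehat{\CMcal{F}}\cong j_*(\widehat{\CMcal{F}}|_U)$; since $\cHom$ commutes with restriction to opens we have $\widehat{\CMcal{F}}|_U=\widehat{\CMcal{F}|_U}$, and as $\CMcal{F}|_U$ is locally free its reflexive hull is canonically itself, whence $\widehat{\CMcal{F}}\cong j_*(\CMcal{F}|_U)$. The main obstacle is the surjectivity half of part (1): the categorical zig-zag only yields a splitting of $c_{\CMcal{F}^\vee}$, and upgrading this to an isomorphism is exactly where the commutative-algebra content sits — one must verify that a second syzygy (equivalently, a dual) satisfies $(S_2)$ and then invoke the $(S_2)$-characterization of reflexivity over a normal domain. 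This is the single place where normality (through $R_1+S_2$) is genuinely used; everything in parts (2) and (3) is a formal consequence of part (1) together with the depth-$2$ behaviour of $\CMcal{O}_X$ encoded by algebraic Hartogs.
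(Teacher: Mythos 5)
Your proposal is correct, but it is doing substantially more work than the paper, whose entire proof consists of three citations: part (1) is \cite[Corollary 1.2]{RHSRS}, part (2) is \cite[Proposition 1.6]{RHSRS}, and part (3) is \cite[Proposition 8.0.1]{CLS}. Where you and Hartshorne genuinely diverge is in part (1): his argument dualizes a local presentation to exhibit $\CMcal{F}^\vee$ as the kernel of a map from a locally free sheaf to a torsion-free sheaf and invokes his Proposition 1.1, which makes the statement valid over any integral scheme; your route through the second-syzygy depth count and the equivalence ``reflexive $\Leftrightarrow$ torsion-free $+\,(S_2)$'' genuinely uses normality (via $(R_1)+(S_2)$), which is harmless here since $X$ is assumed normal but is strictly less general. (Also note that once you invoke that equivalence, the split-monomorphism observation from the triangle identity $(c_{\CMcal{F}})^\vee\circ c_{\CMcal{F}^\vee}=\id_{\CMcal{F}^\vee}$ becomes redundant --- injectivity already follows from torsion-freeness.) Your parts (2) and (3) --- algebraic Hartogs applied to a local kernel presentation for the forward direction, the comparison of $c_{\CMcal{F}}$ with $j_*(c_{\CMcal{F}}|_{U_0})$ over the locally free locus for the converse, and the formal deduction of (3) from (1), (2) and the compatibility of $\cHom$ with restriction --- match the content of the cited results and are sound, granting the standard inputs you defer to the literature (the depth lemma for syzygies, the $(S_2)$-characterization of reflexivity over a normal domain, and extension of functions across codimension~$\geq 2$). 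What your version buys is a self-contained argument making visible exactly where normality enters; what the paper's version buys is brevity and, for part (1), a statement valid without normality.
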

\begin{proof}
(1) is \cite[Corollary 1.2]{RHSRS}. (2) follows from \cite[Proposition 1.6]{RHSRS}. (3) is \cite[Proposition 8.0.1]{CLS}.
\end{proof}
\begin{corollary}\label{refldR} Let $X$ be a normal $\mathbb{C}$-variety.
\begin{quote}
(1) If $X$ is smooth over $\mathbb{C}$, then $\widehat{\Omega}_{X/\mathbb{C}}^\bullet\cong\Omega_{X/\mathbb{C}}^\bullet$.\\
(2) If $j:\mathrm{Reg}(X)\hookrightarrow X$ is the smooth locus, then $j_\ast\Omega_{\mathrm{Reg}(X)/\mathbb{C}}^\bullet\cong\widehat{\Omega}_{X/\mathbb{C}}^\bullet$
\end{quote}
\end{corollary}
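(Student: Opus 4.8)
The plan is to reduce both parts to Proposition~\ref{reflsheaves}, applied in each cohomological degree to the coherent sheaves $\Omega_{X/\mathbb{C}}^p$, and to use that the reflexive-hull functor $(-)^{\vee\vee}$ is natural in its argument. Naturality is exactly what promotes degreewise sheaf isomorphisms to isomorphisms of complexes: the de Rham differential $d\colon\Omega_{X/\mathbb{C}}^p\to\Omega_{X/\mathbb{C}}^{p+1}$ is a morphism of coherent sheaves, so applying $(-)^{\vee\vee}$ produces a differential on $\widehat{\Omega}_{X/\mathbb{C}}^\bullet$ (with $(d^{\vee\vee})^2=(d^2)^{\vee\vee}=0$), and the canonical maps appearing below automatically intertwine the two differentials.

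For part (1), when $X$ is smooth each $\Omega_{X/\mathbb{C}}^p$ is locally free of finite rank, and a finite-rank locally free sheaf is reflexive (locally it is a finite free module, canonically isomorphic to its double dual). Hence the canonical evaluation map $\Omega_{X/\mathbb{C}}^p\to\widehat{\Omega}_{X/\mathbb{C}}^p$ is an isomorphism in every degree; being the component at $\Omega_{X/\mathbb{C}}^p$ of the natural transformation $\mathrm{id}\Rightarrow(-)^{\vee\vee}$, it commutes with $d$, and assembling over $p$ gives $\widehat{\Omega}_{X/\mathbb{C}}^\bullet\cong\Omega_{X/\mathbb{C}}^\bullet$ as complexes.

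For part (2), the geometric input I would invoke is that a normal variety satisfies Serre's condition $R_1$, so the singular locus $X\setminus\mathrm{Reg}(X)$ has codimension $\geq 2$. This makes the open immersion $j\colon\mathrm{Reg}(X)\hookrightarrow X$ satisfy the hypotheses of Proposition~\ref{reflsheaves}(3): the complement has codimension $\geq 2$, and on the smooth locus the restriction $\Omega_{X/\mathbb{C}}^p|_{\mathrm{Reg}(X)}\cong\Omega_{\mathrm{Reg}(X)/\mathbb{C}}^p$ (Kähler differentials commute with the open immersion) is locally free. Applying (3) with $\CMcal{F}=\Omega_{X/\mathbb{C}}^p$ in each degree yields $\widehat{\Omega}_{X/\mathbb{C}}^p\cong j_\ast\Omega_{\mathrm{Reg}(X)/\mathbb{C}}^p$; since $j_\ast$ is a functor, $j_\ast\Omega_{\mathrm{Reg}(X)/\mathbb{C}}^\bullet$ is a complex, and the naturality of the isomorphism of (3) in $\CMcal{F}$ makes the collection of degreewise isomorphisms a morphism of complexes.

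I expect the only real point to verify to be this compatibility with the de Rham differential, i.e. that the degreewise sheaf isomorphisms assemble into isomorphisms of complexes rather than merely of graded sheaves. The clean way to avoid any computation is to keep everything functorial: $d$ is a natural map, $(-)^{\vee\vee}$ is a functor, and the isomorphism in Proposition~\ref{reflsheaves}(3) is natural in $\CMcal{F}$, so the differentials are intertwined automatically. The single substantive hypothesis making (3) applicable is the codimension-$2$ bound on the singular locus, which is supplied precisely by normality through the $R_1$ condition.
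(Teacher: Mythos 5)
Your proposal is correct and follows essentially the same route as the paper: part (1) via reflexivity of locally free sheaves, and part (2) via normality forcing the non-smooth locus to have codimension $\geq 2$ (the paper phrases this through Serre's criterion and the coincidence of the regular and smooth loci over the perfect field $\mathbb{C}$) followed by Proposition \ref{reflsheaves}(3). Your extra care about the degreewise isomorphisms assembling into a map of complexes is a point the paper leaves implicit, and your naturality argument handles it correctly.
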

\begin{proof}
(1) If $X/\mathbb{C}$ is smooth, then $\Omega_{X/\mathbb{C}}^\bullet$ is a complex of locally free $\CMcal{O}_X$-modules.\\
\indent (2) Since $\mathbb{C}$ is perfect, the regular locus $\mathrm{Reg}(X)$ of $X$ agrees with the smooth locus so $\mathrm{Reg}(X)\subseteq X$ is open. By Serre's criterion on normality, $X\setminus \mathrm{Reg}(X)$ is of codimension $\geq2$. Since $\Omega_{\mathrm{Reg}(X)/\mathbb{C}}^\bullet$ is a complex of locally free $\CMcal{O}_{\mathrm{Reg}(X)}$-modules, we conclude by Proposition \ref{reflsheaves} (3).
\end{proof}
From this observation, we may define the \emph{algebraic de Rham cohomology} of normal $\mathbb{C}$-varieties to be
\begin{align*}
H_{\widehat{\mathrm{dR}}}^q(X;\mathbb{C}):=\mathbb{H}^q\left(X,\widehat{\Omega}_{X/\mathbb{C}}^\bullet\right)
\end{align*}
which recovers the usual algebraic de Rham cohomology for $X$ smooth over $\mathbb{C}$. If $\varphi:Y\rightarrow X$ is a map of normal $\mathbb{C}$-varieties, then there is an induced $\CMcal{O}_Y$-linear cochain map:
\begin{align*}
\xymatrix{\varphi^\ast\widehat{\Omega}_{X/\mathbb{C}}^\bullet \ar[r] & \widehat{\varphi^\ast\Omega_{X/\mathbb{C}}^\bullet} \ar[r] & \widehat{\Omega}_{Y/\mathbb{C}}^\bullet}
\end{align*}
which gives the desired functoriality for $H_{\widehat{\mathrm{dR}}}^\bullet$.
\begin{proposition}\label{Kunneth} K\"unneth formula holds for $H_{\widehat{\mathrm{dR}}}^\bullet$. More precisely, if $X$ and $Y$ are normal $\mathbb{C}$-varieties, then there is a canonical isomorphism
\begin{align*}
\xymatrix{
H_{\widehat{\mathrm{dR}}}^\bullet(X;\mathbb{C})\otimes_\mathbb{C} H_{\widehat{\mathrm{dR}}}^\bullet(Y;\mathbb{C}) \ar[r]^-\sim & H_{\widehat{\mathrm{dR}}}^\bullet((X\times_\mathbb{C} Y);\mathbb{C})}
\end{align*}
as graded $\mathbb{C}$-vector spaces.
\end{proposition}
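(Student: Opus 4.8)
The plan is to reduce the statement to the classical Künneth formula for smooth varieties by exploiting the description of the reflexive de Rham complex furnished by Corollary \ref{refldR}(2), namely $\widehat{\Omega}_{X/\mathbb{C}}^\bullet \cong (j_X)_*\Omega_{\mathrm{Reg}(X)/\mathbb{C}}^\bullet$ where $j_X\colon \mathrm{Reg}(X)\hookrightarrow X$ is the inclusion of the smooth locus. Write $U := \mathrm{Reg}(X)$ and $V := \mathrm{Reg}(Y)$. Since $\mathbb{C}$ is perfect, smoothness and regularity coincide, and a product of varieties is smooth at a point exactly when each factor is smooth at the corresponding point; hence $\mathrm{Reg}(X\times_{\mathbb{C}}Y) = U\times_{\mathbb{C}}V$, and $X\times Y$ is again normal. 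Thus Corollary \ref{refldR}(2) applies to all three varieties, and the whole problem is localized onto the smooth open $U\times V$, where the ordinary Künneth decomposition of Kähler differentials $\Omega_{U\times V}^n \cong \bigoplus_{a+b=n}\Omega_U^a\boxtimes\Omega_V^b$ is available.

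Next I would construct the comparison map as the external product: pulling back along the projections $\mathrm{pr}_X,\mathrm{pr}_Y$ and multiplying gives the canonical map $H_{\widehat{\mathrm{dR}}}^\bullet(X)\otimes_{\mathbb{C}}H_{\widehat{\mathrm{dR}}}^\bullet(Y)\to H_{\widehat{\mathrm{dR}}}^\bullet(X\times Y)$, and it remains to prove bijectivity. Because every $\widehat{\Omega}^n$ is coherent (Remark \ref{refhull-coherence}) and varieties are separated, I would compute all three hypercohomologies by Čech complexes: fix finite affine open covers $\{X_i\}$ of $X$ and $\{Y_j\}$ of $Y$, so that $\{X_i\times Y_j\}$ is an affine open cover of $X\times Y$, and the associated Čech double complexes $\check{C}^\bullet(\{X_i\},\widehat{\Omega}_X^\bullet)$, $\check{C}^\bullet(\{Y_j\},\widehat{\Omega}_Y^\bullet)$ and $\check{C}^\bullet(\{X_i\times Y_j\},\widehat{\Omega}_{X\times Y}^\bullet)$ compute $\mathbb{H}^\bullet$ of the respective complexes. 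The crux is to identify the third Čech complex with the tensor product of the first two.

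The key local computation is the following identification of sections over an affine patch $W := X_i\times Y_j$ (which is normal affine, being open in the normal $X\times Y$). By Corollary \ref{refldR}(2), $\Gamma(W,\widehat{\Omega}_{X\times Y}^n) = \Gamma(\mathrm{Reg}(W),\Omega^n) = \Gamma(U_i\times V_j,\Omega^n_{U_i\times V_j})$ with $U_i := \mathrm{Reg}(X_i)$ and $V_j := \mathrm{Reg}(Y_j)$. Applying the smooth external Künneth decomposition of differentials, then the degree-zero external Künneth isomorphism $\Gamma(U_i\times V_j,\mathcal{E}\boxtimes\mathcal{F})\cong \Gamma(U_i,\mathcal{E})\otimes_{\mathbb{C}}\Gamma(V_j,\mathcal{F})$ (valid for quasi-coherent sheaves on quasi-compact separated $\mathbb{C}$-schemes), and finally $\Gamma(U_i,\Omega_{U_i}^a) = \Gamma(X_i,\widehat{\Omega}_X^a)$ (again Corollary \ref{refldR}(2), since the reflexive hull is a local construction), one obtains $\Gamma(W,\widehat{\Omega}_{X\times Y}^n)\cong\bigoplus_{a+b=n}\Gamma(X_i,\widehat{\Omega}_X^a)\otimes_{\mathbb{C}}\Gamma(Y_j,\widehat{\Omega}_Y^b)$, compatibly with the de Rham differential (which becomes $d_X\otimes 1 \pm 1\otimes d_Y$) and with the Čech restriction maps. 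Consequently $\check{C}^\bullet(\{X_i\times Y_j\},\widehat{\Omega}_{X\times Y}^\bullet)$ is isomorphic to the total complex of $\check{C}^\bullet(\{X_i\},\widehat{\Omega}_X^\bullet)\otimes_{\mathbb{C}}\check{C}^\bullet(\{Y_j\},\widehat{\Omega}_Y^\bullet)$.

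Finally, since these are complexes of $\mathbb{C}$-vector spaces, the algebraic Künneth theorem over a field (no $\mathrm{Tor}$ terms) yields $\mathbb{H}^\bullet(X\times Y,\widehat{\Omega}_{X\times Y}^\bullet)\cong\mathbb{H}^\bullet(X,\widehat{\Omega}_X^\bullet)\otimes_{\mathbb{C}}\mathbb{H}^\bullet(Y,\widehat{\Omega}_Y^\bullet)$, and unwinding the identifications shows this isomorphism is precisely the external product map. I expect the main obstacle to be the local computation of the previous paragraph: one must ensure that the termwise reflexive-hull operation is compatible with the external tensor-product structure. The point — and the reason the argument goes through — is that passing to global sections over the regular loci via Corollary \ref{refldR}(2) means one never needs to know that an external tensor product of reflexive sheaves is reflexive (which can fail); the Hartogs-type extension is instead carried by the identity $\Gamma(W,\widehat{\Omega}^n_{X\times Y}) = \Gamma(\mathrm{Reg}(W),\Omega^n)$ on each affine patch. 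Verifying the degree-zero external Künneth isomorphism over the non-affine opens $U_i\times V_j$, and the compatibility of differentials and restriction maps, is then routine.
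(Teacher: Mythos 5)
Your argument is correct and follows essentially the same route as the paper: both reduce to the regular loci via Corollary \ref{refldR}(2) and Proposition \ref{reflsheaves}(3) (using that $\mathrm{Reg}(X)\times_{\mathbb{C}}\mathrm{Reg}(Y)$ sits inside $\mathrm{Reg}(X\times_{\mathbb{C}}Y)$ with complement of codimension $\geq2$), identify $\widehat{\Omega}_{(X\times_{\mathbb{C}}Y)/\mathbb{C}}^\bullet$ with the external tensor product of $\widehat{\Omega}_{X/\mathbb{C}}^\bullet$ and $\widehat{\Omega}_{Y/\mathbb{C}}^\bullet$, and then conclude by a K\"unneth argument over the field $\mathbb{C}$ --- the paper packages this last step via derived tensor products and the cup product formalism, whereas you use explicit \v{C}ech complexes. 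The only point to tighten is that the \v{C}ech complex of the product cover is not literally the tensor product of the two \v{C}ech complexes but only canonically quasi-isomorphic to it (Eilenberg--Zilber/Alexander--Whitney); this is standard and does not affect the conclusion.
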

\begin{proof}
Given normal $\mathbb{C}$-varieties $X$ and $Y$, form a commutative diagram
\begin{align*}
\xymatrix{
\mathrm{Reg}(X)\times_\mathbb{C}\mathrm{Reg}(Y) \ar[d] \ar[r] & X\times_\mathbb{C}\mathrm{Reg}(Y) \ar[d] \ar[r]^-{\mathrm{pr}_{\mathrm{Reg}(Y)}} & \mathrm{Reg}(Y) \ar[d] \\
\mathrm{Reg}(X)\times_\mathbb{C} Y \ar[d]_-{\mathrm{pr}_{\mathrm{Reg}(X)}} \ar[r] & X\times_\mathbb{C} Y \ar[d]_-{\mathrm{pr}_X} \ar[r]^-{\mathrm{pr}_Y} & Y \ar[d] \\
\mathrm{Reg}(X) \ar[r] & X \ar[r] & \Spec\mathbb{C}
}
\end{align*}
Since $X$ and $Y$ are flat over $\mathbb{C}$, the projections in the above diagram are flat. Since $X$ and $Y$ are Noetherian schemes, $\mathrm{Reg}(X)\hookrightarrow X$ and $\mathrm{Reg}(Y)\hookrightarrow Y$ are quasicompact quasiseparated.
\begin{align*}
&\quad\mathrm{pr}_X^\ast\widehat{\Omega}_{X/\mathbb{C}}^1\\
&\cong\mathrm{pr}_{\mathrm{Reg}(X)}^\ast(\mathrm{Reg}(X)\hookrightarrow X)_\ast\Omega_{\mathrm{Reg}(X)/\mathbb{C}}^1\\
&\cong(\mathrm{Reg}(X)\times_\mathbb{C} Y\hookrightarrow X\times_\mathbb{C} Y)_\ast\mathrm{pr}_{\mathrm{Reg}(X)}^\ast\Omega_{\mathrm{Reg}(X)/\mathbb{C}}^1\\
&\cong(\mathrm{Reg}(X)\times_\mathbb{C} Y\hookrightarrow X\times_\mathbb{C} Y)_\ast(\mathrm{Reg}(X)\times_\mathbb{C}\mathrm{Reg}(Y)\hookrightarrow \mathrm{Reg}(X)\times_\mathbb{C} Y)_\ast\mathrm{pr}_{\mathrm{Reg}(X)}^\ast\Omega_{\mathrm{Reg}(X)/\mathbb{C}}^1\\
&\cong(\mathrm{Reg}(X)\times_\mathbb{C}\mathrm{Reg}(Y)\hookrightarrow X\times_\mathbb{C} Y)_\ast\mathrm{pr}_{\mathrm{Reg}(X)}^\ast\Omega_{\mathrm{Reg}(X)/\mathbb{C}}^1
\end{align*}
where the second isomorphism follows from the flat base change theorem and the third from Proposition \ref{reflsheaves} because $\mathrm{pr}_{\mathrm{Reg}(X)}^\ast\Omega_{\mathrm{Reg}(X)/\mathbb{C}}^1$ is locally free. Similarly,
\begin{align*}
\mathrm{pr}_Y^\ast\widehat{\Omega}_{Y/\mathbb{C}}^1\cong(\mathrm{Reg}(X)\times_\mathbb{C}\mathrm{Reg}(Y)\hookrightarrow X\times_\mathbb{C} Y)_\ast\mathrm{pr}_{\mathrm{Reg}(Y)}^\ast\Omega_{\mathrm{Reg}(Y)/\mathbb{C}}^1
\end{align*}
Combining the above observations, we get
\begin{align*}
&\quad\widehat{\Omega}_{(X\times_\mathbb{C} Y)/\mathbb{C}}^1\\
&\cong(\mathrm{Reg}(X)\times_\mathbb{C}\mathrm{Reg}(Y)\hookrightarrow X\times_\mathbb{C} Y)_\ast\Omega_{(\mathrm{Reg}(X)\times_\mathbb{C}\mathrm{Reg}(Y))/\mathbb{C}}^1\\
&\cong(\mathrm{Reg}(X)\times_\mathbb{C}\mathrm{Reg}(Y)\hookrightarrow X\times_\mathbb{C} Y)_\ast\left(\mathrm{pr}_{\mathrm{Reg}(X)}^\ast\Omega_{\mathrm{Reg}(X)/\mathbb{C}}^1\oplus\mathrm{pr}_{\mathrm{Reg}(Y)}^\ast\Omega_{\mathrm{Reg}(Y)/\mathbb{C}}^1\right)\\
&\cong\mathrm{pr}_X^\ast\widehat{\Omega}_{X/\mathbb{C}}^1\oplus\mathrm{pr}_Y^\ast\widehat{\Omega}_{Y/\mathbb{C}}^1
\end{align*}
which in turn gives
\begin{align*}
\widehat{\Omega}_{(X\times_\mathbb{C} Y)/\mathbb{C}}^\bullet\cong\mathrm{Tot}\left(\mathrm{pr}_X^\ast\widehat{\Omega}_{X/\mathbb{C}}^\bullet\otimes_{\CMcal{O}_{X\times_\mathbb{C} Y}}\mathrm{pr}_Y^\ast\widehat{\Omega}_{Y/\mathbb{C}}^\bullet\right)\cong\mathrm{Tot}\left(\mathrm{pr}_X^{-1}\widehat{\Omega}_{X/\mathbb{C}}^\bullet\otimes_\mathbb{C}\mathrm{pr}_Y^{-1}\widehat{\Omega}_{Y/\mathbb{C}}^\bullet\right)
\end{align*}
Now in the canonical map
\begin{align*}
\xymatrix{\mathbf{R}\Gamma\left(X,\widehat{\Omega}_{X/\mathbb{C}}^\bullet\right)\otimes^\mathbb{L}_\mathbb{C}\mathbf{R}\Gamma\left(Y,\widehat{\Omega}_{Y/\mathbb{C}}^\bullet\right) \ar[r] & \mathbf{R}\Gamma\left(X\times_\mathbb{C} Y,\widehat{\Omega}_{(X\times_\mathbb{C} Y)/\mathbb{C}}^\bullet\right)
}
\end{align*}
the complexes $\widehat{\Omega}_{X/\mathbb{C}}^\bullet$ and $\widehat{\Omega}_{Y/\mathbb{C}}^\bullet$ are bounded and term by term flat over $\mathbb{C}$ so this is an isomorphism by the general construction of cup product of objects in the derived category of ringed space, for example \cite[\href{https://stacks.math.columbia.edu/tag/0G4A}{Tag 0G4A}]{Stacks}.
\end{proof}
\begin{corollary}\label{dR-affine-bundle} Let $S$ be a normal $\mathbb{C}$-variety. If $f:X\rightarrow S$ is an $\mathbf{A}^n$-bundle, then
\begin{align*}
\xymatrix{f^\ast:H_{\widehat{\mathrm{dR}}}^\bullet(S;\mathbb{C}) \ar[r] & H_{\widehat{\mathrm{dR}}}^\bullet(X;\mathbb{C})}
\end{align*}
is an isomorphism.
\end{corollary}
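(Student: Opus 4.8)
The plan is to reduce the statement to the case of a trivial bundle $S\times_{\mathbb{C}}\mathbf{A}^n$, where the K\"unneth formula of Proposition \ref{Kunneth} applies, and then to patch over a trivializing cover by Mayer--Vietoris. Throughout I use that $S$ is a variety (of finite type over $\mathbb{C}$), hence quasi-compact, and that $f$ is Zariski-locally trivial: there is a finite open cover $S=U_1\cup\cdots\cup U_N$ with $f^{-1}(U_i)\cong U_i\times_{\mathbb{C}}\mathbf{A}^n$ as $U_i$-schemes. Each $U_i$, and more generally each finite intersection $U_I:=\bigcap_{i\in I}U_i$, is open in $S$, hence normal; since the bundle is trivial over $U_i$ it is a fortiori trivial over every $U_I\subseteq U_i$; and $U_I\times_{\mathbb{C}}\mathbf{A}^n$ is again normal ($\mathbf{A}^n$ being smooth over $\mathbb{C}$), so $H_{\widehat{\mathrm{dR}}}^\bullet$ is defined on all the pieces in sight.

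First I would record the base case $N=1$. Here $X=S\times_{\mathbb{C}}\mathbf{A}^n$ and $f=\mathrm{pr}_S$, so Proposition \ref{Kunneth} gives a canonical isomorphism $H_{\widehat{\mathrm{dR}}}^\bullet(X)\cong H_{\widehat{\mathrm{dR}}}^\bullet(S)\otimes_{\mathbb{C}}H_{\widehat{\mathrm{dR}}}^\bullet(\mathbf{A}^n)$. Since $\mathbf{A}^n$ is smooth, Corollary \ref{refldR}(1) identifies $H_{\widehat{\mathrm{dR}}}^\bullet(\mathbf{A}^n)$ with the ordinary algebraic de Rham cohomology $H_{\mathrm{dR}}^\bullet(\mathbf{A}^n)$, which by the algebraic Poincar\'e lemma in characteristic $0$ is $\mathbb{C}$ concentrated in degree $0$, spanned by the class of $1$. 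Under the K\"unneth isomorphism the induced map $H_{\widehat{\mathrm{dR}}}^\bullet(S)\to H_{\widehat{\mathrm{dR}}}^\bullet(S)\otimes_{\mathbb{C}}\mathbb{C}\cdot[1]$, $a\mapsto a\otimes[1]$, is exactly $\mathrm{pr}_S^\ast=f^\ast$; hence $f^\ast$ is an isomorphism.

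For the patching I would use that the reflexive hull is a local construction: for any open $j\colon W\hookrightarrow X$ one has $\widehat{\Omega}_{X/\mathbb{C}}^\bullet|_W\cong\widehat{\Omega}_{W/\mathbb{C}}^\bullet$, because the local-Hom functor $\underline{\mathrm{Mod}}_{\mathcal{O}_X}(-,\mathcal{O}_X)$ commutes with restriction to open subschemes. Consequently $H_{\widehat{\mathrm{dR}}}^\bullet$, being the hypercohomology of the complex $\widehat{\Omega}^\bullet$, carries a Mayer--Vietoris long exact sequence for any two-element open cover, and $f^\ast$ is a morphism of such sequences. Now I would induct on $N$. Writing $V:=U_1\cup\cdots\cup U_{N-1}$, apply Mayer--Vietoris to $\{V,U_N\}$ on $S$ and to $\{f^{-1}(V),f^{-1}(U_N)\}$ on $X$; these fit into a ladder of long exact sequences whose vertical arrows are $f^\ast$. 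The schemes $V$ and $U_N$ admit trivializing covers of size $\le N-1$, and $V\cap U_N=\bigcup_{i<N}(U_i\cap U_N)$ is covered by the $N-1$ opens $U_i\cap U_N\subseteq U_N$ over which the bundle is trivial, so the induction hypothesis yields that $f^\ast$ is an isomorphism over $V$, over $U_N$, and over $V\cap U_N$. The five lemma then forces $f^\ast$ to be an isomorphism over $S$, completing the induction.

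The main obstacle here is bookkeeping rather than genuine difficulty, since Proposition \ref{Kunneth} already carries the analytic weight: it is the normality/reflexive-hull compatibility underpinning both ends of the argument. Concretely, one must verify that restriction to opens really commutes with the reflexive hull (so that Mayer--Vietoris is available and the local pieces carry the intended complex $\widehat{\Omega}^\bullet$), and that each product $U_I\times_{\mathbb{C}}\mathbf{A}^n$ is normal so that the K\"unneth formula genuinely applies to the trivializations. Once these points are in place, the homotopy invariance expressed by the corollary is entirely formal.
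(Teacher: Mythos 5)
Your proof is correct and follows essentially the same route as the paper: reduce to the trivial bundle via Proposition \ref{Kunneth} together with $H_{\widehat{\mathrm{dR}}}^\bullet(\mathbf{A}^n;\mathbb{C})\cong\mathbb{C}[0]$, then globalize over a finite trivializing cover. The only difference is the patching device: the paper refines to a finite \emph{affine} trivializing cover and invokes the spectral sequence of the \v{C}ech--de Rham double complex, whereas you run a Mayer--Vietoris induction with the five lemma; both rest on the same local computation.
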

\begin{proof} On a trivializing open subset $U\subseteq S$, there are canonical isomorphisms
\begin{align*}
\xymatrix{H_{\widehat{\mathrm{dR}}}^\bullet\left(f^{-1}(U);\mathbb{C}\right) \ar[r]^-\sim & H_{\widehat{\mathrm{dR}}}^\bullet(U\times_\mathbb{C}\mathbf{A}^n;\mathbb{C}) \ar[r]^-\sim & H_{\widehat{\mathrm{dR}}}^\bullet(U;\mathbb{C})\otimes_\mathbb{C} H_{\widehat{\mathrm{dR}}}^\bullet(\mathbf{A}^n;\mathbb{C}) \ar[r]^-\sim & H_{\widehat{\mathrm{dR}}}^\bullet(U;\mathbb{C})}
\end{align*}
where the second isomorphism comes from Proposition \ref{Kunneth}, and the last from $H_{\widehat{\mathrm{dR}}}^\bullet(\mathbf{A}^n;\mathbb{C})\cong\mathbb{C}[0]$. Hence
\begin{align*}
\xymatrix{f^\ast:H_{\widehat{\mathrm{dR}}}^\bullet(U;\mathbb{C}) \ar[r] & H_{\widehat{\mathrm{dR}}}^\bullet\left(f^{-1}(U);\mathbb{C}\right)}
\end{align*}
is an isomorphism. Choose a finite affine open covering $\CMcal{U}:=\{U_i\}_{i\in I}$ of $S$ consisting of trivializing open subsets, $f^\ast$ induces a map of the associated double complexes:
\begin{align*}
\xymatrix{\displaystyle f^\ast:\check{C}^\bullet\left(\CMcal{U},\widehat{\Omega}_{S/\mathbb{C}}^\bullet\right) \ar[r] & \check{C}^\bullet\left(f^{-1}\CMcal{U},\widehat{\Omega}_{X/\mathbb{C}}^\bullet\right)}
\end{align*}
where $f^{-1}\CMcal{U}:=\{f^{-1}(U_i)\}_{i\in I}$ is a covering of $X$. Then the associated spectral sequence gives
\begin{align*}
\xymatrix{\displaystyle f^\ast:\bigoplus_{i_0,\cdots,i_p\in I}H_{\widehat{\mathrm{dR}}}^q(U_{i_0\cdots i_p};\mathbb{C}) \ar[r] & \displaystyle\bigoplus_{i_0,\cdots,i_p\in I}H_{\widehat{\mathrm{dR}}}^q\left(f^{-1}(U_{i_0\cdots i_p});\mathbb{C}\right)}
\end{align*}
which is an isomorphism by the above observation. Since the double complexes are bounded, the spectral sequence converges to give the desired conclusion.
\end{proof}
\begin{proposition}\label{normal-dR-compare} If $X_\Sigma$ is a normal toric variety, then
\begin{align*}
H_{\widehat{\mathrm{dR}}}^\bullet(X_\Sigma;\mathbb{C})\cong H^\bullet(X_\Sigma,\underline{\mathbb{C}})
\end{align*}
where $\underline{\mathbb{C}}$ is the constant sheaf on $X$.
\end{proposition}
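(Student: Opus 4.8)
The plan is to route the comparison through the Ishida complex $\CMcal{L}^\bullet_{X_\Sigma/\mathbb{C}}$ that already appeared in the proof of Proposition \ref{tctw-surjective}, using it as the bridge between reflexive algebraic differential forms and the constant sheaf. First I would rewrite the left-hand side: by the definition in this appendix, $H_{\widehat{\mathrm{dR}}}^\bullet(X_\Sigma;\mathbb{C})=\mathbb{H}^\bullet(X_\Sigma,\widehat{\Omega}_{X_\Sigma/\mathbb{C}}^\bullet)$, and Corollary \ref{refldR}(2) identifies $\widehat{\Omega}_{X_\Sigma/\mathbb{C}}^\bullet$ with $j_\ast\Omega_{\mathrm{Reg}(X_\Sigma)/\mathbb{C}}^\bullet$ for the inclusion $j$ of the smooth locus. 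The natural map $\widehat{\Omega}_{X_\Sigma/\mathbb{C}}^\bullet\to\CMcal{L}^\bullet_{X_\Sigma/\mathbb{C}}$ is a quasi-isomorphism of complexes of sheaves by \cite[Theorem 3.6]{TOCB}, so this step yields $H_{\widehat{\mathrm{dR}}}^\bullet(X_\Sigma;\mathbb{C})\cong\mathbb{H}^\bullet(X_\Sigma,\CMcal{L}^\bullet_{X_\Sigma/\mathbb{C}})$.

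Second, I would show that $\CMcal{L}^\bullet_{X_\Sigma/\mathbb{C}}$ is a resolution of the constant sheaf $\underline{\mathbb{C}}$ on $X_\Sigma^{\mathrm{an}}$. As recorded in the proof of Proposition \ref{tctw-surjective}, each component of the Ishida double complex is the logarithmic de Rham complex of an intersection of torus-invariant divisors, i.e. of a (smooth) closed toric stratum, while the transverse differential is the combinatorial differential over the orbit--cone poset. On each smooth stratum the holomorphic Poincaré lemma identifies the log de Rham complex with a shift of the constant sheaf of the corresponding orbit, and assembling these identifications along the orbit stratification of $X_\Sigma$ exhibits the total complex as a resolution of $\underline{\mathbb{C}}$. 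Passing to hypercohomology then gives $\mathbb{H}^\bullet(X_\Sigma,\CMcal{L}^\bullet_{X_\Sigma/\mathbb{C}})\cong H^\bullet(X_\Sigma,\underline{\mathbb{C}})$, which combined with the first step is exactly the asserted isomorphism.

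The step I expect to be the main obstacle is the passage across the singular strata. Although $X_\Sigma\setminus\mathrm{Reg}(X_\Sigma)$ has codimension $\geq 2$ by Serre's criterion, the singular cohomology $H^\bullet(X_\Sigma,\underline{\mathbb{C}})$ is genuinely not computed by the smooth locus alone, so the ordinary Poincaré lemma for $\Omega^\bullet_{\mathrm{Reg}(X_\Sigma)/\mathbb{C}}$ does not suffice; one must know that reflexive forms correctly account for the singular orbits, and it is precisely this content that the Ishida resolution encodes. A secondary technical point is the comparison between algebraic and analytic hypercohomology of $\widehat{\Omega}_{X_\Sigma/\mathbb{C}}^\bullet$, a GAGA-type statement that is immediate when $X_\Sigma$ is complete and otherwise follows from the toric de Rham comparison theory of \cite{TOCB} applied stratum by stratum; I would therefore fold it into the resolution argument rather than treat it as a separate reduction.
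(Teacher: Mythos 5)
Your route is genuinely different from the paper's, and shorter options were available. The paper's entire proof is a citation of \cite[Lemma 4.5]{VD}: Danilov's toric Poincar\'e lemma asserts directly that the augmented complex $0\to\underline{\mathbb{C}}\to\CMcal{O}_{X_\Sigma}\to\widehat{\Omega}_{X_\Sigma/\mathbb{C}}^1\to\widehat{\Omega}_{X_\Sigma/\mathbb{C}}^2\to\cdots$ is exact, so $\underline{\mathbb{C}}\to\widehat{\Omega}_{X_\Sigma/\mathbb{C}}^\bullet$ is a quasi-isomorphism and the statement follows by taking hypercohomology. You instead interpose the Ishida complex and try to recover the resolution property stratum by stratum. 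That is a legitimate strategy --- it is essentially how Ishida and Oda organize this material --- but \cite[Theorem 3.6]{TOCB} only trades one complex of coherent sheaves for another, so all of the actual content is pushed into your second step. (Your algebraic-versus-analytic worry is orthogonal to the choice of route: it is equally present in the paper's argument and is not alleviated by passing to the Ishida complex.)

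The gap is in that second step. The terms of the Ishida double complex are supported on the orbit closures $V(\sigma)$, which are themselves toric varieties and in general \emph{singular}; what lives on them is the module of invariant log forms $\CMcal{O}_{V(\sigma)}\otimes_{\mathbb{Z}}\bigwedge^{\bullet}(M\cap\sigma^{\perp})$, not the de Rham complex of a smooth stratum. So ``the holomorphic Poincar\'e lemma on each smooth stratum'' does not apply as stated: identifying the hypercohomology of the log de Rham complex of a singular toric variety $V(\sigma)$ with the cohomology of its open orbit is a statement of exactly the same nature as the proposition you are trying to prove, so the argument as written is close to circular. Likewise, ``assembling these identifications along the orbit stratification'' is where the real work lies --- one needs the spectral sequence of the double complex, the combinatorial exactness coming from the poset of cones, and an identification of the abutment with $H^\bullet(X_\Sigma,\underline{\mathbb{C}})$ --- and none of this is carried out. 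The economical fix is to do what the paper does: invoke \cite[Lemma 4.5]{VD}, whose proof is a local computation on the affine charts $U_\sigma$ using the $M$-grading, and which delivers the quasi-isomorphism $\underline{\mathbb{C}}\simeq\widehat{\Omega}_{X_\Sigma/\mathbb{C}}^\bullet$ in one step.
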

\begin{proof}
In this case,
\begin{align*}
\xymatrix{0 \ar[r] & \underline{\mathbb{C}} \ar[r] & \CMcal{O}_{X_\Sigma} \ar[r]^-d & \widehat{\Omega}_{X_\Sigma/\mathbb{C}}^1 \ar[r]^-d & \widehat{\Omega}_{X_\Sigma/\mathbb{C}}^2 \ar[r]^-d & \cdots}
\end{align*}
is an exact complex by \cite[Lemma 4.5]{VD}, i.e.
\begin{align*}
\xymatrix{(\underline{\mathbb{C}},0) \ar[r] & \left(\widehat{\Omega}_{X_\Sigma/\mathbb{C}}^\bullet,d\right)}
\end{align*}
is a quasi-isomorphism.
\end{proof}
For definitions and notations of the following proposition, see \cite[\href{https://stacks.math.columbia.edu/tag/0162}{Tag 0162}]{Stacks}
or \cite[section 8]{Weibel}.
\begin{proposition}\label{cosimplicial-resolution} Let $A$ be a normal $\mathbb{C}$-algebra. If $\{\Spec B_i\}_{i=1,\cdots,k}$ is an affine open covering of $\Spec A$ (for example $B_i=A_{f_i}$ with $f_1,\cdots,f_k\in A$ generating the unit ideal) then with $B:=B_1\times\cdots\times B_k$, the canonical map into the cosimplicial de Rham complex $\widehat{\Omega}_{B^\bullet/\mathbb{C}}^\bullet$ (usually $B^\bullet$ is denoted by $(B/A)^\bullet$):
\begin{align*}
\xymatrix{\widehat{\Omega}_{A/\mathbb{C}}^\bullet \ar[r] & \widehat{\Omega}_{B^\bullet/\mathbb{C}}^\bullet}
\end{align*}
is a $0$-coskeletal cosimplicial resolution.
\end{proposition}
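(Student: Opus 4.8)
The plan is to prove the statement one de Rham degree at a time and to recognize each resulting augmented cosimplicial $A$-module as a \v{C}ech complex on the affine scheme $X=\Spec A$. Fix a cohomological degree $p$ and write $\widehat{\Omega}^p:=\widehat{\Omega}_{A/\mathbb{C}}^p$ for the associated reflexive sheaf on $X$. First I would compute the level-$n$ term $\widehat{\Omega}_{(B/A)^n/\mathbb{C}}^p$ explicitly. Since $(B/A)^n=B^{\otimes_A(n+1)}$ and each $\Spec B_i$ is an open subscheme of the separated (indeed affine) scheme $X$, the $(n+1)$-fold fibre product $\Spec\!\left(B^{\otimes_A(n+1)}\right)$ is the disjoint union of the intersections $\Spec B_{i_0}\cap\cdots\cap\Spec B_{i_n}$, which are themselves affine by separatedness of $X$. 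Thus $\widehat{\Omega}_{(B/A)^n/\mathbb{C}}^p\cong\prod_{i_0,\ldots,i_n}\widehat{\Omega}_{B_{i_0\cdots i_n}/\mathbb{C}}^p$, where $B_{i_0\cdots i_n}$ is the coordinate ring of the corresponding intersection.

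The next step is to check that the reflexive hull commutes with restriction to these open subschemes, so that each factor is simply $\Gamma\!\left(\Spec B_{i_0\cdots i_n},\widehat{\Omega}^p\right)$. This holds because for an open immersion $j\colon V\hookrightarrow X$ the internal $\mathcal{H}om$ into the structure sheaf commutes with restriction, hence so does its double dual; equivalently, the reflexive hull commutes with the flat base change $A\to B_{i_0\cdots i_n}$. Under this identification the cosimplicial coface and codegeneracy operators of $\widehat{\Omega}_{(B/A)^\bullet/\mathbb{C}}^p$ become precisely the (un-normalised) \v{C}ech differentials of the coherent sheaf $\widehat{\Omega}^p$ for the affine open cover $\mathcal{U}=\{\Spec B_i\}$, using the functoriality of $\widehat{\Omega}^\bullet$ for maps of normal varieties recorded in the excerpt. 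The augmentation is then the canonical map $\Gamma(X,\widehat{\Omega}^p)\to\check{C}^0(\mathcal{U},\widehat{\Omega}^p)$.

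With this reformulation the resolution property is exactly the exactness of the augmented \v{C}ech complex $\Gamma(X,\widehat{\Omega}^p)\to\check{C}^\bullet(\mathcal{U},\widehat{\Omega}^p)$. Here I would invoke that $\widehat{\Omega}^p$ is coherent (Remark \ref{refhull-coherence}), hence quasi-coherent, and that all finite intersections of the $\Spec B_i$ are affine. Serre's vanishing for quasi-coherent sheaves on affine schemes gives $H^q\!\left(\Spec B_{i_0\cdots i_n},\widehat{\Omega}^p\right)=0$ for $q>0$, so the \v{C}ech-to-derived-functor spectral sequence collapses and $\check{H}^q(\mathcal{U},\widehat{\Omega}^p)\cong H^q(X,\widehat{\Omega}^p)$, which vanishes for $q>0$ and equals $\Gamma(X,\widehat{\Omega}^p)$ for $q=0$. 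Thus each fixed-$p$ augmented cosimplicial module is a resolution. Since the de Rham differential $d$ commutes with all the face maps (again by functoriality of $\widehat{\Omega}^\bullet$), these assemble into an augmented double complex with exact columns, so the augmentation $\widehat{\Omega}_{A/\mathbb{C}}^\bullet\to\mathrm{Tot}\,\widehat{\Omega}_{(B/A)^\bullet/\mathbb{C}}^\bullet$ is a quasi-isomorphism; the $0$-coskeletal property holds by construction, as $(B/A)^\bullet=\mathrm{cosk}_0(B/A)$ is the \v{C}ech nerve of $A\to B$.

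The main obstacle I anticipate is the bookkeeping in the identification step: one must verify carefully that the reflexive hull is compatible both with the open restrictions and with the tensor-product (fibre-product) structure defining $(B/A)^\bullet$, and that the induced cosimplicial differentials match the \v{C}ech differentials with the correct signs so that Dold--Kan applies cleanly. Once this compatibility is established, the cohomological content reduces to standard \v{C}ech vanishing on affine schemes, and no genuinely new geometric input is required.
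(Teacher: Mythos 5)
Your proposal is correct and follows essentially the same route as the paper: identify $\widehat{\Omega}_{(B/A)^\bullet/\mathbb{C}}^\bullet$ with the \v{C}ech--de Rham double complex of the coherent sheaves $\widehat{\Omega}_{A/\mathbb{C}}^p$ for the affine cover (using that the reflexive hull, being a double $\mathcal{H}om$ of a coherent module, commutes with restriction to the affine intersections), and then conclude exactness of each column from the vanishing of higher cohomology of quasi-coherent sheaves on affines. The only cosmetic difference is that you argue degree by degree and assemble, while the paper packages the same content as a single quasi-isomorphism onto the total complex via the \v{C}ech spectral sequence.
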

\begin{proof} Recall that for $n\geq0$,
\begin{align*}
(B/A)^n=B\otimes_AB\otimes_A\cdots\otimes_AB\quad(\textrm{$n+1$ times})
\end{align*}
As $\Spec A$ is affine, for $1\leq i_0,\cdots,i_n\leq k$,
\begin{align*}
\Spec B_{i_0}\cap\cdots\cap\Spec B_{i_n}\cong\Spec B_{i_0}\otimes_A\cdots\otimes_AB_{i_n}
\end{align*}
is still an affine open subset of $\Spec A$. Hence, by the coherence of $\widehat{\Omega}_{A/\mathbb{C}}^\bullet$ (Remark \ref{refhull-coherence})
\begin{align*}
\widehat{\Omega}_{B^\bullet/\mathbb{C}}^\bullet\cong B^\bullet\otimes_A\widehat{\Omega}_{A/\mathbb{C}}^\bullet
\end{align*}
compatible with the restriction maps along the open embeddings $\Spec B_{i_0}\otimes_A\cdots\otimes_AB_{i_n}\hookrightarrow\Spec A$. Being a cosimplicial resolution means that the canonical map induces a quasi-isomorphism:
\begin{align*}
\xymatrix{\widehat{\Omega}_{A/\mathbb{C}}^\bullet \ar[r] & \mathrm{Tot}\left(C^\bullet\widehat{\Omega}_{B^\bullet/\mathbb{C}}^\bullet\right)}
\end{align*}
where $C^\bullet$ takes the unnormalized complex (see \cite[Definition 8.2.1]{Weibel} for example) in the cosimplicial direction, and $\mathrm{Tot}$ takes its total complex. In fact, the total complex will be a \v{C}ech-de Rham complex of $\widehat{\Omega}_{A/\mathbb{C}}^\bullet$ with respect to the covering $\{\Spec B_i\}_{i=1,\cdots,k}$. Therefore, the canonical map above is a quasi-isomorphism by \cite[\href{https://stacks.math.columbia.edu/tag/0FLH}{Tag 0FLH}]{Stacks} because the higher cohomology of each $\widehat{\Omega}_{A/\mathbb{C}}^q$ vanishes on affine open subsets (see \cite[\href{https://stacks.math.columbia.edu/tag/01XB}{Tag 01XB}]{Stacks} for example).
\end{proof}

\section{Reductive group actions and algebraic de Rham complexes}\label{actiondR}
This section is a minor copy of \cite{GJ}. Given an algebraic group $G$ over $\mathbb{C}$ acting on a $\mathbb{C}$-scheme $X$ of finite type, regarding $G$ acting on itself by conjugation, there is a diagram
\begin{align*}
\xymatrix{
G & G\times_\mathbb{C} X \ar[l]_-{\mathrm{pr}_G} \ar[d]^-{\mathrm{pr}_X} \ar@<0.5ex>[r]^-\alpha & X \ar@<0.5ex>[l]^-\sigma \\
& X &
}
\end{align*}
of $G$-equivariant maps where $\alpha$ is the action and $\sigma=(1,-)$ is a section of $\alpha$. This gives a commutative diagram of $G$-equivariant coherent $\CMcal{O}_{G\times_\mathbb{C} X}$-modules
\begin{align*}
\xymatrix{
\alpha^\ast\Omega_{X/\mathbb{C}}^1 \ar[d] \ar[r]^-{d\alpha} & \Omega_{(G\times_\mathbb{C} X)/\mathbb{C}}^1 \ar[d]^-\wr \\
\mathrm{pr}_G^\ast\Omega_{G/\mathbb{C}}^1 & \mathrm{pr}_G^\ast\Omega_{G/\mathbb{C}}^1\oplus\mathrm{pr}_X^\ast\Omega_{X/\mathbb{C}}^1 \ar[l]
}
\end{align*}
Pulling back the left column by $\sigma$, we get an $\CMcal{O}_X$-linear map
\begin{align*}
\xymatrix{
\sigma^\ast\alpha^\ast\Omega_{X/\mathbb{C}}^1 \ar[d]^-\wr \ar[r] & \sigma^\ast\mathrm{pr}_G^\ast\Omega_{G/\mathbb{C}}^1 \ar[d]^\wr \\
\Omega_{X/\mathbb{C}}^1 \ar[r]_-{d\alpha_{X,G}^1} & \mathrm{Lie}(G)^\vee\otimes_\mathbb{C}\CMcal{O}_X
}
\end{align*}
where the dual is taken over $\mathbb{C}$. Using this, we get a $G$-equivariant cochain map
\begin{align*}
\xymatrix{d\alpha_{X,G}:\left(\Omega_{X/\mathbb{C}}^\bullet,d\right) \ar[r] & \left(\mathrm{Lie}(G)^\vee\otimes_\mathbb{C}\Omega_{X/\mathbb{C}}^\bullet,1\otimes_\mathbb{C} d\right)}
\end{align*}
given by
\begin{align*}
d\alpha_{X,G}(df_1\wedge\cdots\wedge df_\ell):=\sum_{i=1}^\ell(-1)^{i-1}d\alpha_{X,G}^1(df_i)\otimes df_1\wedge\cdots\wedge\widehat{df_i}\wedge\cdots\wedge df_\ell
\end{align*}
Now define the complex of \emph{horizontal forms} to be
\begin{align*}
\left(\Omega_{X/\mathbb{C},G}^\bullet,d\right):=\ker\left(\xymatrix{d\alpha_{X,G}:\left(\Omega_{X/\mathbb{C}}^\bullet,d\right) \ar[r] & \left(\mathrm{Lie}(G)^\vee\otimes_\mathbb{C}\Omega_{X/\mathbb{C}}^\bullet,1\otimes_\mathbb{C} d\right)}\right)
\end{align*}
which is still $G$-equivariant.
\begin{example}\label{Euler-Gm} Consider $G=\mathbf{G}_m\cong\Spec\mathbb{C}[\lambda^{\pm1}]$. Then there is an identification
\begin{align*}
\xymatrix{\mathbb{C} \ar[r]^-\sim & \mathrm{Lie}(\mathbf{G}_m) & 1 \ar@{|->}[r] & \displaystyle\lambda\frac{\partial}{\partial\lambda}}
\end{align*}
If $\mathbf{G}_m$ acts on a $\mathbb{C}$-scheme $X$ of finite type, then the above identification gives an $\CMcal{O}_X$-linear map
\begin{align*}
\xymatrixcolsep{3pc}\xymatrix{\theta_w:\Omega_{X/\mathbb{C}}^1 \ar[r]^-{d\alpha_{X,\mathbf{G}_m}^1} & \mathrm{Lie}(\mathbf{G}_m)^\vee\otimes_\mathbb{C}\CMcal{O}_X \ar[r]^-\sim & \CMcal{O}_X}
\end{align*}
which corresponds to a derivation on $X$, called the \emph{Euler derivation} on $X$. If $X$ is affine, then $\Gamma(X,\CMcal{O}_X)$ is naturally graded by the $\mathbf{G}_m$-action: $f\in\Gamma(X,\CMcal{O}_X)$ is said to be of \emph{weight} $w\in\mathbb{Z}$ if $\alpha^\ast f=\lambda^w\mathrm{pr}_X^\ast f$. In this case, if $f\in\Gamma(X,\CMcal{O}_X)$ is homogeneous, then $\theta_w(df)=\deg_w(f)f$.
This Euler derivation can be extended to a degree $-1$ endomorphism of the graded module:
\begin{align*}
\xymatrix{\theta_w:\Omega_{X/\mathbb{C}}^\ell \ar[r] & \Omega_{X/\mathbb{C}}^{\ell-1}}
\end{align*}
given by the rule
\begin{align*}
\xymatrix{df_1\wedge\cdots\wedge df_\ell \ar@{|->}[r] & \displaystyle\sum_{i=1}^\ell(-1)^{i-1}\theta_w(df_i)df_1\wedge\cdots\wedge\widehat{df_i}\wedge\cdots\wedge df_\ell}.
\end{align*}
Then notice that
\begin{eqnarray}\label{exabc}
\Omega_{X/\mathbb{C},\mathbf{G}_m} \cong \Omega_{A[S^{-1}]_{(0)}/\mathbb{C}}, \quad X=\Spec(A[S^{-1}]).
\end{eqnarray}
\end{example}

\begin{proposition}\label{torusdeg} Let $G$ be an algebraic group over $\mathbb{C}$. If $G\times_\mathbb{C}\mathbf{G}_m$ acts on an affine $\mathbb{C}$-scheme $X$ of finite type via
\begin{align*}
\xymatrix{\alpha:G\times_\mathbb{C}\mathbf{G}_m\times_\mathbb{C} X \ar[r] & X}
\end{align*}
then $\theta_w$ preserves the submodule $\Omega_{X/\mathbb{C},G}^\bullet$ and satisfies the following properties:
\begin{quote}
(1) $\theta_w^2=0$.\\
(2) $\theta_w$ is a derivation of the wedge product, i.e. if $\beta$ is an $\ell$-form, then
\begin{align*}
\theta_w(\beta\wedge\gamma)=\theta_w\beta\wedge\gamma+(-1)^\ell\beta\wedge\theta_w\gamma
\end{align*}
(3) $\left(\Omega_{X/\mathbb{C}}^\bullet,d\right)$ is graded by weight as well: $\xi\in\Omega_{X/\mathbb{C}}^\bullet$ is of weight $w\in\mathbb{Z}$ if $\alpha^\ast\xi=\lambda^w\mathrm{pr}_X^\ast\xi$. Moreover, $d$ and $\theta_w$ are weight $0$, i.e. if $\xi$ is homogeneous, then $\theta_w\xi$ and $d\xi$ are homogeneous in $\deg_w$, and $\deg_w\xi=\deg_w\theta_w\xi=\deg_wd\xi$.\\
(4) The wedge product is of weight $0$, i.e. if $\beta$ and $\gamma$ are homogeneous, then $\beta\wedge\gamma$ is homogeneous and $\deg_w(\beta\wedge\gamma)=\deg_w\beta+\deg_w\gamma$.\\
(5) $\ker\theta_w=\Omega_{X/\mathbb{C},\mathbf{G}_m}$.
\end{quote}
If, furthermore, $G$ is reductive, then the following hold.
\begin{quote}
(6) The submodule $(\Omega_{X/\mathbb{C},G}^\bullet)^G\subseteq(\Omega_{X/\mathbb{C}}^\bullet)^G$ is stable under the exterior derivative of $\Omega_{X/\mathbb{C}}^\bullet$.\\
(7) For any homogeneous $\xi$, $(d\theta_w+\theta_wd)\xi=(\deg_w\xi)\xi$
\end{quote}
\end{proposition}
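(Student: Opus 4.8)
The plan is to reduce everything to the concrete description of $\theta_w$ as contraction with the Euler vector field and to exploit that the two factors of $G\times_\mathbb{C}\mathbf{G}_m$ act independently. Since $X=\Spec R$ is affine, the $\mathbf{G}_m$-action makes $R$ into a $\mathbb{Z}$-graded $\mathbb{C}$-algebra, with $f$ of weight $w$ exactly when $\alpha^\ast f=\lambda^w\,\mathrm{pr}_X^\ast f$, and by Example \ref{Euler-Gm} the operator $\theta_w$ is the extension to all exterior degrees of the Euler derivation, i.e. contraction $\iota_{E_w}$ with the Euler vector field $E_w$ attached to $\lambda\frac{\partial}{\partial\lambda}\in\mathrm{Lie}(\mathbf{G}_m)$, characterized on generators by $\theta_w(df)=(\deg_w f)f$ and $\theta_w(f)=0$. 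Because $G$ and $\mathbf{G}_m$ are the two commuting factors of the product group, the map $d\alpha_{X,G}$ (built from the $G$-direction) and $\theta_w$ (built from the $\mathbf{G}_m$-direction) commute; hence $\theta_w$ preserves $\Omega_{X/\mathbb{C},G}^\bullet=\ker(d\alpha_{X,G})$, which is the first assertion.

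Properties (1)--(5) are then formal. For (1) and (2) I would record that contraction with a vector field is a square-zero graded derivation of degree $-1$; both statements express this, and they follow from the defining formula for $\theta_w$ by a direct computation on decomposables $df_1\wedge\cdots\wedge df_\ell$. For (3) and (4) I would put the weight grading on $\Omega_{X/\mathbb{C}}^\bullet$ by declaring $\xi$ to have weight $w$ when $\alpha^\ast\xi=\lambda^w\,\mathrm{pr}_X^\ast\xi$; since $\alpha^\ast$ is a morphism of de Rham complexes commuting with $d$ and multiplicative for $\wedge$, and $\lambda^w$ is pulled back from the $\mathbf{G}_m$-factor, both $d$ and $\theta_w$ are weight $0$ and the wedge product is additive in weight. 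Assertion (5) is immediate from Example \ref{Euler-Gm}: under $\mathrm{Lie}(\mathbf{G}_m)^\vee\cong\mathbb{C}$ the cochain map $d\alpha_{X,\mathbf{G}_m}$ of the appendix is exactly $\theta_w$, so $\Omega_{X/\mathbb{C},\mathbf{G}_m}^\bullet=\ker\theta_w$ by definition.

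For the reductive case, (6) is again structural: $d\alpha_{X,G}$ is a $G$-equivariant cochain map, so its kernel $\Omega_{X/\mathbb{C},G}^\bullet$ is a $d$-stable subcomplex, and since $d$ is $G$-equivariant it preserves the $G$-invariant part $(\Omega_{X/\mathbb{C},G}^\bullet)^G\subseteq(\Omega_{X/\mathbb{C}}^\bullet)^G$. (Reductivity of $G$ is not needed for stability itself; it is the standing hypothesis under which the invariants functor is exact, as used elsewhere.) The heart of the matter is (7): I would prove the Cartan homotopy identity $d\theta_w+\theta_wd=\mathcal{L}_{E_w}$, where $\mathcal{L}_{E_w}$ is the Lie derivative along $E_w$, and then verify that $\mathcal{L}_{E_w}$ acts on a weight-$w$ homogeneous form as multiplication by $w$. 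The latter follows by differentiating $\alpha^\ast\xi=\lambda^w\,\mathrm{pr}_X^\ast\xi$ in the $\mathbf{G}_m$-direction at $\lambda=1$; the former is a direct check on the generators $f$ and $df$, using $\theta_w(df)=(\deg_w f)f$ and $\theta_w(f)=0$, extended by the derivation property (2).

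The main obstacle is twofold. First, one must carefully match the scheme-theoretic objects of the appendix --- the equivariant map $d\alpha_{X,G}$, the identification $\mathrm{Lie}(\mathbf{G}_m)^\vee\cong\mathbb{C}$, and the weight grading defined through $\alpha^\ast$ --- with the concrete contraction-and-Lie-derivative calculus, so that the formal identities transfer correctly; once this dictionary is fixed, (1)--(6) become bookkeeping. Second, the genuine computation is (7): establishing $d\theta_w+\theta_wd=\mathcal{L}_{E_w}$ on the merely normal affine $X$ and identifying the Lie derivative with the weight operator. I expect this to be routine on the homogeneous generators, but it requires the grading compatibilities of (3)--(4) to be in place first.
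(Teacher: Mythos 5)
Your proposal supplies an actual argument where the paper gives none: the paper's proof of Proposition \ref{torusdeg} is a one-line citation to Propositions 2.2--2.4 of \cite{GJ}. Your route --- identify $\theta_w$ with contraction against the Euler vector field of the $\mathbf{G}_m$-factor as in Example \ref{Euler-Gm}, deduce (1)--(5) from the square-zero graded-derivation and weight-grading formalism, and deduce (7) from the Cartan identity $d\theta_w+\theta_wd=\mathcal{L}_{E_w}$ together with $\mathcal{L}_{E_w}\xi=(\deg_w\xi)\xi$ on homogeneous $\xi$ --- is the standard one and is essentially what the cited results of \cite{GJ} contain, so in substance this is fine.

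The one step whose justification does not work as written is (6). You claim $d\alpha_{X,G}$ is a cochain map, so its kernel $\Omega_{X/\mathbb{C},G}^\bullet$ is $d$-stable, and then pass to invariants. But $d\alpha_{X,G}$ is a contraction-type operator and does \emph{not} commute with $d$: on a function $f$ one has $d\alpha_{X,G}(f)=0$ while $d\alpha_{X,G}(df)=d\alpha_{X,G}^1(df)\otimes1$, which is generally nonzero, and your own identity in (7) shows that in the $\mathbf{G}_m$-direction $d\theta_w+\theta_wd$ is multiplication by the weight rather than zero. Consequently the full kernel $\Omega_{X/\mathbb{C},G}^\bullet$ is not $d$-stable; this is exactly why Proposition \ref{dRconcent} in the main text only asserts that $\theta_{c,i}$ and $\theta_w$ become cochain maps after restricting to the degree-zero subcomplexes. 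The correct argument for (6) is the invariant-form version of Cartan's formula: for $\omega\in(\Omega_{X/\mathbb{C},G}^\bullet)^G$, differentiating $\alpha^\ast\omega=\mathrm{pr}_X^\ast\omega$ gives $\mathcal{L}_\xi\omega=0$ for every $\xi\in\mathrm{Lie}(G)$, hence $\iota_\xi d\omega=\mathcal{L}_\xi\omega-d\iota_\xi\omega=0$, so $d\omega$ is again horizontal, and it is invariant because $d$ commutes with $\alpha^\ast$. You already have every ingredient for this; only the stated justification needs replacing. The preamble claim that $\theta_w$ preserves $\Omega_{X/\mathbb{C},G}^\bullet$ is fine, since the two contractions anticommute (compare Lemma \ref{Cayley-theta}(1)) and so the kernel of one is preserved by the other.
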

\begin{proof}
This is a combination of \cite[Proposition 2.2]{GJ}, \cite[Proposition 2.3]{GJ}, and \cite[Proposition 2.4]{GJ}
\end{proof}
\begin{proposition}\label{reductive-quotient-de Rham} Let $G$ be a reductive group over $\mathbb{C}$ acting on a smooth affine $\mathbb{C}$-scheme $X$ and $X\rightarrow Y$ the quotient. If $Y$ is normal, then there is an isomorphism
\begin{align*}
\left(\widehat{\Omega}_{Y/\mathbb{C}}^\bullet,d\right)\cong\left(\Omega_{X/\mathbb{C},G}^\bullet,d\right)^G
\end{align*}
\end{proposition}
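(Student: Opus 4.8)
The plan is to argue algebraically. Write $A=\Gamma(X,\mathcal{O}_X)$, a smooth $\mathbb{C}$-algebra carrying a $G$-action, and $B=A^G=\Gamma(Y,\mathcal{O}_Y)$, so that $\pi$ corresponds to the inclusion $B\hookrightarrow A$ and $\Omega_{X/\mathbb{C},G}^\bullet$ is the complex of horizontal forms, i.e. the kernel of $d\alpha_{X,G}$. The target isomorphism is then $\widehat\Omega_{B/\mathbb{C}}^\bullet\cong(\Omega_{A/\mathbb{C},G}^\bullet)^G$. First I would construct the comparison map: the inclusion $B\hookrightarrow A$ induces $\pi^\flat:\Omega_{B}^\bullet\to\Omega_{A}^\bullet$, and since every $b\in B=A^G$ is annihilated by the fundamental vector fields $\underline\xi$ ($\xi\in\mathrm{Lie}(G)$), the image of $\pi^\flat$ is both $G$-invariant and horizontal, so $\pi^\flat$ factors through $(\Omega_{A,G}^\bullet)^G$.

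Next I would verify that the target is a reflexive $B$-module. As $X$ is smooth, each $\Omega_A^p$ is locally free, whence $\Omega_{A,G}^p=\ker(d\alpha_{X,G})$ is reflexive, being the kernel of a map of locally free sheaves; taking $G$-invariants (an exact functor, since $G$ is reductive in characteristic $0$) and pushing forward along the affine morphism $\pi$ to the normal scheme $Y$ preserves this reflexivity. Then $\pi^\flat$ extends uniquely to a map $\widehat\Omega_B^\bullet\to(\Omega_{A,G}^\bullet)^G$ by Proposition \ref{reflsheaves}, and because both sides are reflexive $\mathcal{O}_Y$-modules on the normal variety $Y$, it suffices to prove bijectivity over an open $V\subseteq Y$ with $\mathrm{codim}(Y\setminus V)\geq2$. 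Corollary \ref{refldR}(2) already identifies $\widehat\Omega_B^\bullet$ with the pushforward of $\Omega^\bullet_{\mathrm{Reg}(Y)}$, so the whole question is localized over $\mathrm{Reg}(Y)$.

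On the locus $X^s\subseteq X$ where the orbits attain maximal dimension — a $G$-stable open set — the cotangent sequence $A\otimes_B\Omega_B^1\to\Omega_A^1\to\Omega_{A/B}^1\to0$ has injective left map whose image is exactly the horizontal $1$-forms, while the composite $\Omega_A^1\to\Omega_{A/B}^1\xrightarrow{\sim}\mathrm{Lie}(G)^\vee\otimes\mathcal{O}_X$ recovers $d\alpha_{X,G}^1$; thus $A\otimes_B\Omega_B^1\cong\Omega_{X^s,G}^1$ there. Taking $G$-invariants, and using $A^G=B$ together with linear reductivity, yields $(\Omega_{X^s,G}^1)^G\cong\Omega_{B}^1$ over $\pi(X^s)$, and the case of general $p$ follows by passing to exterior powers. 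This establishes the isomorphism over the good locus, which is where the argument is genuinely easy.

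The main obstacle is precisely that $\pi$ need not be a $G$-torsor: the principal-bundle locus can have complement of codimension $1$ in $Y$ (as already for $\mathbf{G}_m$ acting on $\mathbf{A}^2$ with a line of fixed points), so one cannot simply extend from it by reflexivity. I would resolve the remaining points $y\in\mathrm{Reg}(Y)$, whose closed orbit has reductive stabilizer $H$, by invoking Luna's \'etale slice theorem to replace $X$ \'etale-locally by the model $G\times_H\mathbb{S}$, with $H$ acting linearly on a slice $\mathbb{S}$. Invariant horizontal forms for this model reduce to $H$-basic forms on $\mathbb{S}$, and for the linear $H$-representation the identification $(\Omega_{\mathbb{S},H}^\bullet)^H\cong\widehat\Omega_{\mathbb{S}/\!/H}^\bullet$ is proved by decomposing $\Omega_{\mathbb{S}}^\bullet$ into isotypic components under the Reynolds operator and matching the horizontal-invariant part with the reflexive differentials downstairs. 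Verifying this local model — that horizontality together with $H$-invariance cuts the forms down to exactly the reflexive differentials on the possibly singular quotient, with no spurious extension across the small strata — is the technical heart; the globalization is then formal once \'etale descent for reflexive sheaves is in hand.
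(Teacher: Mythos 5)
The paper does not actually prove this proposition: its ``proof'' is a citation to \cite{GJ} (Proposition 3.2 and Remark 3.3), so your from-scratch outline is attempting strictly more than the authors do. Your roadmap --- comparison map, reflexivity of both sides, reduction to an open set with small complement, local analysis via Luna slices --- is the right general shape and is essentially how results of this kind are proved, but as written it has two genuine gaps.

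First, the reflexivity of $\left(\pi_\ast\Omega_{X/\mathbb{C},G}^p\right)^G$ as an $\CMcal{O}_Y$-module is asserted in one line (``pushing forward along the affine morphism $\pi$ \dots preserves this reflexivity''), but the obvious justification fails: the extension criterion of Proposition \ref{reflsheaves} (2) would require extending invariant sections from $\pi^{-1}(V)$ to $X$ for $V\subseteq Y$ with complement of codimension $\geq2$, and $\pi^{-1}(Y\setminus V)$ can be a divisor in $X$ --- already for $\mathbf{G}_m$ acting on $\mathbf{A}^3$ with weights $(1,1,-1)$ the preimage of the origin of $Y=\mathbf{A}^2$ contains the hyperplane $x_3=0$. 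So reflexivity upstairs does not transfer for free; the $G$-invariance must enter essentially, and this needs an argument. Second --- and you say so yourself --- the entire content of the statement is concentrated in the linear slice model $(\Omega_{\mathbb{S},H}^\bullet)^H\cong\widehat{\Omega}_{\mathbb{S}/\!/H}^\bullet$, and ``decomposing into isotypic components and matching the horizontal-invariant part with the reflexive differentials downstairs'' is a restatement of the goal, not a proof; the codimension-one phenomena you correctly identify (the non-principal locus mapping onto a divisor of $Y$) are exactly what make this step nontrivial. Relatedly, your identification of the horizontal forms with $A\otimes_B\Omega_{B/\mathbb{C}}^1$ via the cotangent sequence is only valid where $\pi$ is a principal bundle (or at least smooth with the orbits as fibers), not on the whole locus of maximal orbit dimension, where non-closed orbits and nontrivial finite stabilizers still occur. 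Until the slice computation is actually carried out, the proposal is a plausible plan rather than a proof; the efficient route is the one the paper takes, namely quoting \cite{GJ}.
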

\begin{proof}
It follows from \cite[Proposition 3.2]{GJ} and \cite[Remark 3.3]{GJ}.
\end{proof}

\end{document}